\newcommand{\N}{\mathbb{N}}
\newcommand{\R}{\mathbb{R}}
\newcommand{\T}{\mathbb{T}}
\newcommand{\e}{\operatorname{e}}		
\newcommand{\uno}{\mathbb{1}}			
\newcommand{\Z}{\mathbb{Z}}
\DeclareMathOperator{\pt}{\partial_t}	
\DeclareMathOperator{\grad}{\nabla}		
\DeclareMathOperator{\sg}{\cdot \nabla}		
\newcommand{\norm}[2]{\|#1\|_{#2}}
\newcommand{\co}{\omega} 
\newcommand{\coLo}{\omega^{loc}} 
\newcommand{\coL}{\overline{\omega}} 
\newcommand{\triple}{\Delta^2_T} 
\newcommand{\locspac}{C^{p\var}_{\coL,L}} 
\newcommand{\locspace}{C^{p\var,2}_{\coL,L}} 
\newcommand{\parloc}{\boldsymbol{\Pi}(\coL,L)}
\newcommand{\rp}{\mathbf{Z}}
\newcommand{\var}{\textup{-var}}
\newcommand{\rpspace}{\mathscr{C}^{p\var}_g}
\newcommand{\contr}{\mathscr{D}^{p\var}_{Z,(\coL,L)}}
\newcommand{\ovsi}{\overline{\sigma}} 
\newcommand{\st}{_{s,t}}
\newtheorem{theorem}{Theorem}[section]
\newtheorem{lemma}[theorem]{Lemma}
\newtheorem{proposition}[theorem]{Proposition}
\newtheorem{corollary}[theorem]{Corollary}
\theoremstyle{definition}
\newtheorem{definition}[theorem]{Definition}
\theoremstyle{remark}
\newtheorem{remark}[theorem]{Remark}
\numberwithin{equation}{section}
\begin{document}
	\title{Well-posedness of rough 2D Euler equation with bounded vorticity}
	
    \author{L. Roveri$^1$}
    \address{$^1$ Dipartimento di Matematica, Universit\`a di Pisa, Largo Bruno Pontecorvo 5, I--56127 Pisa, Italia }\email{\href{mailto:leonardo.roveri@phd.unipi.it}{leonardo.roveri@phd.unipi.it}}
	
    \author{F. Triggiano$^2$} 
    \address{$^2$ Scuola Normale Superiore, Piazza dei Cavalieri, 7, I--56126 Pisa, Italia}\email{\href{mailto:francesco.triggiano@sns.it}{francesco.triggiano@sns.it}}
    \date{\today}
    \keywords{2D Euler equation, rough transport noise, bounded vorticity}
    \begin{abstract}
        We consider the 2D Euler equation with bounded initial vorticity and perturbed by rough transport noise. We show that there exists a unique solution, which coincides with the starting condition advected by the Lagrangian flow. Moreover, the stability of the solution map with respect to the initial vorticity and the rough perturbation yields a Wong-Zakai result for fractional Brownian driving paths.
    \end{abstract}
	\maketitle

	
\section{Introduction}
In the last two decades, there has been a significant development in the analysis of fluid dynamics equations perturbed by a transport type noise due to the emergence of the regularization by noise phenomena, namely the ability to restore well-posedness \cite{FGP10,CM23,GL23,FL21} or the appearance of a spare dissipative effect under a suitable scaling limit \cite{FGL21,BL24}.

While all the above mentioned works are restricted to the case of Brownian motion, similar results have been identified even with less regular noise such as fractional Brownian motion with Hurst parameter $H<\frac{1}{2}$ \cite{CG16,Cat16,GHM23}. Since the latter is not a semimartingale, classical stochastic calculus results are unavailable, and the time integral must be built in the rough integral sense. This suggests that rough path theory \cite{Lyo98,FH20,FV10} may offer the right tools to treat fluid dynamics equations perturbed by transport type noise with driving signals very irregular in time \cite{Hai11,GT10}.

Lately, the introduction of the "unbounded rough drivers" formalism (\cite{BG17, DEYA20193577}) has allowed the recovery of classical tools, such as the Gr\"onwall lemma, in the rough path setting. This approach has been successfully applied to obtain well-posedness results and energy estimates for Navier-Stokes and Euler equations perturbed by more general noises \cite{Hof1,Hof2,Cri1}.

In this work, we exploit the unbounded rough drivers' machinery to study the 2D Euler equation in vorticity form, perturbed by rough transport noise, in the Yudovich setting \cite{Y63}, namely with bounded starting condition:
\begin{equation}\label{eq:origEuler}
\begin{cases}
\pt w(t,x) + u(t,x) \sg w(t,x) = \sum_{j=1}^M \sigma_j(x) \sg w(t,x) \dot{Z}^j_t , \\
w(0,x) = w_0(x) , \quad u(t, \cdot) = K_{BS} * w(t,\cdot) ,
\end{cases}
\end{equation}
where $x \in \T^2$, $w_0 \in L^{\infty}(\T^2)$, $\sigma_j:\T^2\rightarrow\R^2$ are sufficiently smooth and divergence-free vector fields, $Z^j$ are finite $p$-variation paths with $p\in[2,3)$, and $K_{BS}$ denotes the Biot-Savart kernel.
In particular, we show that there exists a unique weak solution to \cref{eq:origEuler} and for the corresponding Lagrangian equation. Moreover, we also obtain that the solution map is continuous with respect to the initial condition, the $\ sigma_j$ and the driver.
 
From a technical point of view, the main novelties regard the uniqueness proof for both the Lagrangian RDE with log-Lipschitz drift and the 2D Euler equation. 
For the former, we prove the solution (in the sense of controlled rough paths, see \cite{Gub04}) is unique by applying a localization procedure together with a rough Gr\"onwall Lemma. For the latter, we obtain uniqueness by showing that any solution is a Lagrangian solution, namely it is advected by the associated flow. This claim is proved by combining Dobrushin's strategy from \cite{D79}, i.e. identifying the rough equation satisfied by any observable of the inverse flow map, with a doubling variable scheme. 

We conclude the introduction with the notion of solution and the main result. The rest of the paper is organized as follows: \cref{sec:preliminaries} contains the basics of the unbounded rough drivers and some results on $p$-variation spaces; in \cref{sec:Exist} we prove that there exists a solution to 2D Euler equation; \cref{sec:ELflow} is about the well-posedness results for the Lagrangian RDE and, at last, in \cref{sec:uniq} we conclude the uniqueness for the 2D Euler equation and report a Wong-Zakai result.

\subsection{Main results}\label{sec:formEq}

Since the driving signal in \cref{eq:origEuler} is too irregular to be integrated à la Young, the strategy is to iterate the equation, substituting the solution term $w$ in $\sigma_j \sg w \dot{Z}^j$. 
This way, we obtain  
\begin{align*}
w_{s,t} 
={} & 
- \int_s^t u_r \sg w_r d r 
+ \int_s^t \sigma_j \sg w_r d Z^j_r \\
={} & 
- \int_s^t u_r \sg w_r d r 
+ \sigma_j \sg w_s Z^j_{s,t} 
- \int_s^t (\sigma_j \sg) \int_s^r u_{r_1} \sg w_{r_1} d r_1 d Z^j_r \\
& + \int_s^t (\sigma_j \sg) \int_s^r \sigma_i \sg w_{r_1} d Z^i_{r_1} d Z^j_r \\
={} & 
- \int_s^t u_r \sg w_r d r 
+ \sigma_j \sg w_s Z^j_{s,t} 
- \int_s^t (\sigma_j \sg) \int_s^r u_{r_1} \sg w_{r_1} d r_1 d Z^j_r \\
& + (\sigma_j \sg) (\sigma_i \sg) w_s \Z^{i,j}_{s,t} 
- \int_s^t (\sigma_j \sg) \int_s^r (\sigma_i \sg) \int_s^{r_1} u_{r_2} \sg w_{r_2} d r_2 d Z^i_{r_1} d Z^j_r \\ 
& + \int_s^t (\sigma_j \sg) \int_s^r (\sigma_i \sg) \int_s^{r_1} \sigma_h \sg w_{r_2} d Z^h_{r_2} d Z^i_{r_1} d Z^j_r .
\end{align*}

The above can be reformulated as 
\begin{equation}\label{eq:rougheuler}
w_{s,t} = \mu_{s,t} + A^1_{s,t} w_s + A^2_{s,t} w_s + w^\natural_{s,t} ,
\end{equation}
where, for the sake of notation, we denoted the drift by 
\[
\mu_{s,t} = - \int_s^t u_r \sg w_r d r ,
\]
the unbounded rough operators by 
\begin{equation}\label{eq:urd1}
\begin{gathered}
A^1_{s,t} \varphi = \sigma_j \sg \varphi Z^j_{s,t} \\
A^2_{s,t} \varphi = (\sigma_j \sg) (\sigma_i \sg) \varphi \Z^{i,j}_{s,t}
\end{gathered}
\end{equation}
and the remainder term by 
\begin{equation}\label{eq:remainder}
\begin{aligned}
w^\natural_{s,t}
={} & 
- \int_s^t \int_s^r (\sigma_j \sg) (u_{r_1} \sg) w_{r_1} d r_1 d Z^j_r \\
& - \int_s^t \int_s^r \int_s^{r_1} (\sigma_j \sg) (\sigma_i \sg) (u_{r_2} \sg) w_{r_2} d r_2 d Z^i_{r_1} d Z^j_r \\ 
& + \int_s^t \int_s^r \int_s^{r_1} (\sigma_j \sg) (\sigma_i \sg) (\sigma_h \sg) w_{r_2} d Z^h_{r_2} d Z^i_{r_1} d Z^j_r .
\end{aligned}\end{equation}

\begin{remark}
	It is important to notice that the remainder term $w^\natural$ is defined starting from a solution $w$, and what one has to verify is that $w^\natural_{s,t}$ is in fact a negligible term, i.e. it has finite $\frac{p}{3}$-variation.
\end{remark}

We are ready to state what we mean by a solution of \cref{eq:rougheuler}.
Since we work with duals of Sobolev spaces $W^{n,q}$, we denote them by $W^{-n,q}$. In addition, we will denote $L^{\infty}([0,T]\times \T^2)$ and $C([0,T],X)$ by $L^{\infty}_{t,x}$ and $C_tX$, respectively
\begin{definition}\label{def:roughsolution}
	A function $w:[0,T]\times \T^2 \to \R$ is a weak solution of \cref{eq:rougheuler} if $w\in C_tW^{-1,1} \cap L^\infty_{t,x}$ and there exists a localization $(\coL,L)$ such that $w^\natural \in C_{2,\coL,L}^{p/3\var}(W^{-3,1})$, i.e.
	\[
	w^\natural_{s,t}(\psi) \coloneqq w_{s,t}(\psi) - \mu\st (\psi) - w_s \left( \left(  A^{1,*}_{s,t} + A^{2,*}_{s,t} \right) \psi \right)
	\]
	has finite $\frac{p}{3}$-variation with respect to $(\coL, L)$ for all $\psi\in W^{3,1}$.
\end{definition}

We can, then, state the main results.
\begin{theorem}\label{mainthm}
    Let $w_0\in L^{\infty}(\T^2)$, $\mathbf{Z}$ be a $p$-geometric rough path with $p \in [2,3)$ and $\{\sigma_j\}_{j=1}^M\subset C^3(\T^2,\R^2)$ be divergence-free vector fields. Then, the following statements hold:
    \begin{enumerate}
        \item (\cref{ExistenceLagr}) If $w$ is a solution in the sense of \cref{def:roughsolution} and $u=K_{BS}\ast w$, then there exists a unique controlled rough path $\phi_t$ w.r.t. $\mathbf{Z}$ such that
        \begin{equation*}
            \phi_t=Id_{\T^2}+\int_0^t u(r,\phi_r)dr-\int_0^t \sigma_j(\phi_r)dZ_r^j;
        \end{equation*}
        \item (\cref{th:vanishvisc,thm:uniq,thm:WZ}) There exists a unique solution $w$ to \cref{eq:origEuler} in the sense of \cref{def:roughsolution}. Moreover, $w$ is advected by the associated flow,
        \begin{equation*}
            w_t=(\phi_t)_{\#}w_0,
        \end{equation*}
        and the solution map $(w_0,\{\sigma_j\}_j,\mathbf{Z})\mapsto w$ is continuous.
    \end{enumerate}
\end{theorem}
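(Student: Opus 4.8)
The plan is to treat the two items in turn, with item~(1) — the Lagrangian rough differential equation — serving as the backbone for item~(2); this is the programme of \cref{sec:Exist,sec:ELflow,sec:uniq}, resting on the unbounded rough driver and $p$-variation machinery of \cref{sec:preliminaries}. For item~(1), recall that a solution $w$ in the sense of \cref{def:roughsolution} lies in $C_tW^{-1,1}\cap L^\infty_{t,x}$, so $u=K_{BS}*w$ is bounded, continuous in time, and log-Lipschitz in space uniformly in time by the classical Yudovich estimate \cite{Y63}; together with $\sigma_j\in C^3$ this places the equation for $\phi_t$ in a regime where the rough term admits a second-level rough integral (here $p<3$ enters) while the drift is merely Osgood. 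Existence of a solution in the space of controlled rough paths \cite{Gub04} I would obtain by a Peano-type scheme — solving with the drift frozen on a shrinking time grid, with uniform bounds on the Gubinelli derivative and the remainder depending only on $\|u\|_\infty$, $\|\sigma\|_{C^3}$ and $\|\mathbf Z\|_{p\var}$ — and extracting a convergent subsequence by compactness.

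The first genuine difficulty is \emph{uniqueness} of this RDE. For two solutions $\phi^1,\phi^2$ I would set $\xi_t=\phi^1_t-\phi^2_t$, write the perturbed linear rough equation it solves, and estimate its $p$-variation on a short interval: the rough part is controlled linearly in $\|\xi\|$ since the $\sigma_j$ are Lipschitz, while the drift contributes $|u(r,\phi^1_r)-u(r,\phi^2_r)|\lesssim\varphi(\|\xi_r\|)$ with $\varphi(\rho)\sim\rho\log(1/\rho)$ the log-Lipschitz modulus. Combining a localization of the a priori remainder with the rough Gr\"onwall lemma from \cref{sec:preliminaries} then closes an Osgood-type inequality for $\sup_{r\le t}\|\xi_r\|$, forcing $\xi\equiv0$, and patching over a finite partition of $[0,T]$ gives uniqueness. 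Run with $u$, $\{\sigma_j\}$ and $\mathbf Z$ simultaneously perturbed, the same estimate furnishes the quantitative stability of the flow used later.

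For the existence half of item~(2), in \cref{sec:Exist} I would regularize — by a vanishing-viscosity term $\nu\lap$ or by mollifying the noise and solving via characteristics — producing smooth approximate vorticities $w^\varepsilon$ with $\|w^\varepsilon_t\|_{L^\infty}\le\|w_0\|_{L^\infty}$, from the maximum principle respectively from volume preservation by the divergence-free transport, since $u^\varepsilon=K_{BS}*w^\varepsilon$ and the $\sigma_j$ are divergence-free. Feeding this uniform $L^\infty$ bound into \cref{eq:rougheuler} yields time-equicontinuity of $w^\varepsilon$ in $W^{-3,1}$ and hence, by interpolation, precompactness in $C_tW^{-1,1}$. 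Passing to the limit requires showing that the commutator errors $\Fcomm$ and $\Scomm$ arising when one compares the mollified and un-mollified unbounded rough drivers vanish, and then that the limiting remainder $w^\natural$ of \cref{eq:remainder} has finite $\frac p3$-variation against a localization — i.e.\ that the limit is a solution in the sense of \cref{def:roughsolution}.

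It remains to prove that every solution $w$ is \emph{advected} by its flow, $w_t=(\phi_t)_\#w_0$, which in particular yields uniqueness for \cref{eq:origEuler}. Following Dobrushin's strategy \cite{D79}, I would first identify the linear rough transport equation solved by the observables $t\mapsto\psi(\phi_t^{-1}(\cdot))$ of the inverse flow, then compare it against the equation for $w$ through a doubling-of-variables scheme — pairing both equations with a product test function $\psi(x)\theta(y)$ mollified near the diagonal — and close the estimate with the rough Gr\"onwall lemma, propagating a negative-order distance between $w_t$ and $(\phi_t)_\#w_0$ which vanishes at $t=0$. Continuity of $(w_0,\{\sigma_j\}_j,\mathbf Z)\mapsto w$ then follows by compactness and uniqueness: along converging data the solutions are precompact in $C_tW^{-1,1}$ by the uniform $L^\infty$ bound, any limit point solves the limiting equation by continuity of the rough integral and of the Biot–Savart law, and therefore coincides with $w$; specializing to the canonical lifts of piecewise-linear approximations of a fractional Brownian motion with Hurst parameter $H>\frac13$, which converge in $p$-variation to the associated geometric rough path, yields the Wong–Zakai statement. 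The main obstacle is to run a rough Gr\"onwall argument through a non-Lipschitz, merely Osgood nonlinearity — both in the uniqueness of the Lagrangian RDE and, more delicately, inside the doubling-of-variables scheme for the advection identity, where only $\|w\|_{L^\infty}$ and the log-Lipschitz norm of $u$ are available, so every commutator and regularization error in the approximation must be tracked explicitly.
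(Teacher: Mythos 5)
Your outline reproduces the paper's overall architecture (stability of the log-Lipschitz Lagrangian RDE via a localized rough Gr\"onwall/Osgood argument, vanishing viscosity plus compactness for existence, Dobrushin's strategy with a doubling-of-variables/blow-up for the advection identity, and continuity of the solution map giving Wong--Zakai), but it has one genuine gap: the claim that the advection identity $w_t=(\phi_t)_{\#}w_0$ ``in particular yields uniqueness'' for \cref{eq:origEuler}. Each solution $w^i$ is advected by its \emph{own} flow $\phi^i$, driven by its own velocity $u^i=K_{BS}*w^i$, so to conclude $w^1=w^2$ one must still prove that the two flows coincide. The paper does this by a separate argument: using measure preservation of the flow (established in \cref{ExistenceLagr}) one rewrites $(K_{BS}*w^i_r)(\phi^i_r(x))=\int_{\T^2}K_{BS}(\phi^i_r(x)-\phi^i_r(y))w_0(y)\,dy$, so both flows solve the non-local RDE \cref{eq:eulerFlowII}, and uniqueness for that equation (\cref{thm:uniqFlowII}) is then proved using the averaged log-Lipschitz property of the Biot--Savart kernel, $\int_{\T^2}|K_{BS}(x-y)-K_{BS}(x'-y)|\,dy\lesssim\gamma(|x-x'|)$, integration in space, concavity of $\gamma$, and an Osgood comparison. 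This step is nowhere in your proposal, and it does not follow from your earlier rough Gr\"onwall estimates because the drift there is non-local in the unknown flow; without it the uniqueness claim in item (2) is unproved.

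Two smaller deviations are worth flagging, though they are legitimate alternatives rather than errors. First, the paper does not close the doubling-of-variables step by propagating a negative-order distance with rough Gr\"onwall: after the blow-up limit it obtains an \emph{exact} identity, because for $\beta\equiv1$ the limiting remainder $\overline{g}^{\natural}_{s,t}(1)$ is additive with finite $\frac p3$-variation and hence vanishes, giving $\int_{\T^2}w_t\,\psi(\phi_t^{-1})\,dx=\int_{\T^2}w_0\,\psi\,dx$ directly; if you instead run a Gr\"onwall argument on $w_t-(\phi_t)_{\#}w_0$ you must first show this difference solves a linear rough continuity equation and prove uniqueness for it, which amounts to the same doubling work. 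Second, for item (1) the paper's existence proof is not a Peano-type freezing scheme with compactness but an approximation argument: mollify the drift and take smooth lifts of the driver, solve the regular RDEs classically, and show the approximations are Cauchy via the stability estimate \cref{ConvergenceEstimate} together with the comparison ODE $\dot z=\gamma(z)$; your uniqueness mechanism for the Lagrangian RDE otherwise matches the paper's. The essential missing ingredient remains the uniqueness of the non-local flow equation described above.
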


\section{Preliminaries}\label{sec:preliminaries}


\subsection{Localized variation spaces}

Since it emerges naturally in rough path theory, let us recall what we mean by the term ``control''. 
For a deeper description of its properties, we refer e.g. to \cite{FV10}. 

From now on, we are going to denote 
\[
\Delta_T \coloneqq \{(s,t) | 0\leq s \leq t \leq T\} , 
\qquad 
\Delta_T^2 \coloneqq \{(s,r,t) | 0\leq s \leq r \leq t \leq T\} ,
\]
and, for two-index functions $g:\Delta_T\to V$, $(s,r,t)\in\Delta_T^2$,
\[
\delta g_{s,r,t}  \coloneqq g(s,t) - g(s,r) - g(r,t) .
\]

In addition, throughout our discussion, we are going to use the notation $a \lesssim b$ whenever there is an inequality up to a (positive) constant, i.e. there exists some $C>0$ such that $a \leq C b$. When the constant depends on a parameter, $C = C(par)$, we highlight that as follows: $a \lesssim_{par} b$.

\begin{definition}
    A continuous function $\co:\Delta_T \to \R^+$ is a control if 
    \begin{itemize}
        \item $\co(s,s)=0$, for all $s\in[0,T]$;
        \item $\co$ is super-additive, i.e. $\co(s,u)+\co(u,t)\le \co(s,t)$ for all $(s,u,t)\in \triple$.
    \end{itemize}
    
    Moreover, a control $\omega$ is said to be strictly positive if, for all $(s,t) \in \Delta_T$, 
    \begin{equation*}
        \omega(s,t)=0 \implies s=t.
    \end{equation*}
\end{definition}

Another key concept is the $p$-variation of a path, which can be seen as a generalization of H\"older continuity.

\begin{definition}
    Given a Banach space $V$, $p>0$ and a continuous function $g:[0,T]\to V$, we say that $g$ has finite $p$-variation, $g\in C^{p\var}([0,T],V)$, if 
    \[
    \norm{g}{p}^p \coloneqq \sup_{\Pi\in\boldsymbol{\Pi}([0,T])} \sum_{[s,t]\in\Pi} |g_t-g_s|^p < \infty ,
    \]
    where $\boldsymbol{\Pi}([0,T])$ is the set of partitions of $[0,T]$, $g_t\coloneqq g(t)$ and $|\cdot| = \norm{\cdot}{V}$ (we are going to adopt this notation for a norm whenever the space $V$ is clear from the context).

    In the case of a two-index function $g:\Delta_T\to V$, the above notation can be adapted by changing $|g_t-g_s|$ into $|g_{s,t}|$, where $g\st\coloneqq g(s,t)$.
\end{definition}

In particular, we are going to extensively use partitions whose mesh is bounded from above by some quantity. 
To this aim, we introduce the concept of \textit{localized $p$-variation}.

\begin{definition}
    Given a Banach space $V$, $p>0$, a control $\coL$ and $L>0$, we denote by $\locspace(V)$ the space of $2$-index continuous functions $g:\Delta_T\to V$
    such that 
    \begin{equation} \label{pNorm}
        \norm{g}{p,(\coL,L),[0,T]}^p\coloneqq \sup_{ \Pi \in \parloc} \sum_{[s,t] \in \Pi} |g_{s,t}|^p<\infty,
    \end{equation}
    where $\parloc$ denotes the space of partitions $\Pi$ of $[0,T]$ such that $\coL(s,t)\le L$ for all $[s,t]\in \Pi$.

    Analogously, $\locspac(V)$ denotes the space of $1$-index continuous functions $f:[0,T]\to V$ such that \cref{pNorm} holds, with $g_{s,t}\coloneqq |g_{t}-g_{s}|$.
\end{definition}

By slightly modifying some results in \cite{FV10} (see, e.g., Proposition 5.8 and Proposition 5.10), we obtain the following Proposition.

\begin{proposition}\label{BestControl}
Let $g \in \locspace(V)$ or $g \in \locspac(V)$. Then, the function
$\co_g:\Delta_T\to \R^+$ defined by
\begin{equation*}
    \coLo _g(s,t)=\norm{g}{p,(\coL,L),[s,t]}^p
\end{equation*}
is a control. In addition, $\coLo_g$ is the smallest control such that 
\begin{equation}\label{SmallestControlCondition}
    |g_{s,t}|\le \coLo_g(s,t)^{\frac{1}{p}} \text{ for all } (s,t)\in \Delta_T \text{ such that } \coL(s,t)\le L.
\end{equation}
Namely, if $\co$ is a control which satisfies \cref{SmallestControlCondition}, then \begin{equation*}
    \coLo_g(u,v)\le \co(u,v),
\end{equation*} for all $(u,v)\in \Delta_T$.
\end{proposition}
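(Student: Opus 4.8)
The plan is to reproduce, with the localization constraint carried through unchanged, the classical proof that the $p$-variation over subintervals is a control (\cite{FV10}, Propositions~5.8 and~5.10); I only discuss the $2$-index case, the other being identical after setting $g_{s,t}\coloneqq g_t-g_s$. The preliminary observation is that the class of \emph{constrained} partitions---those all of whose cells $[s,t]$ satisfy $\coL(s,t)\le L$---is well behaved: since $\coL$ is a continuous super-additive control, for every $s$ the map $r\mapsto\coL(s,r)$ is continuous, non-decreasing, and vanishes at $r=s$, so every interval $[s,t]$ admits constrained partitions; and adjoining points to a constrained partition keeps it constrained, because refining only lowers the $\coL$-content of each cell. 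Hence all the partition manipulations used below---concatenating constrained partitions of adjacent intervals, deleting a cell, refining---preserve the constraint, which is all that is needed to transcribe the arguments of \cite{FV10}.

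Granting this, the simple control axioms are immediate: $\coLo_g(s,s)=0$; $\coLo_g$ is monotone (extend a constrained partition of $[s,t]$ to one of any $[s',t']\supseteq[s,t]$ and take suprema) and super-additive (the union of constrained partitions of $[s,r]$ and $[r,t]$ is a constrained partition of $[s,t]$); and monotonicity together with the hypothesis $\coLo_g(0,T)=\norm{g}{p,(\coL,L),[0,T]}^p<\infty$ forces $\coLo_g$ to be finite everywhere.

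The substantial point---and the main obstacle---is the continuity of $\coLo_g$, which I would obtain as in \cite{FV10}. First I would show $\lim_{t\downarrow\tau}\coLo_g(\tau,t)=\lim_{s\uparrow\tau}\coLo_g(s,\tau)=0$ for every $\tau$: such limits exist by monotonicity and boundedness, and were one of them equal to some $\alpha>0$ then, for $t$ close to $\tau$, a near-optimal constrained partition of $[\tau,t]$ would---by uniform continuity of $g$ on the compact set $\Delta_T$ (note $g$ vanishes on the diagonal, since otherwise the $(\coL,L)$-variation would be infinite)---have every cell of magnitude $<\alpha/4$, hence at least two cells; deleting the first cell leaves a constrained partition of a strictly smaller interval with localized $p$-variation still $>\alpha/4$, and iterating along a sequence $t_n\downarrow\tau$ while passing to pairwise disjoint subintervals yields arbitrarily many disjoint intervals each with $\coLo_g>\alpha/4$, contradicting super-additivity and $\coLo_g(0,T)<\infty$. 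Splitting a constrained partition of $[\tau-\gamma,\tau+\gamma]$ at $\tau$ (the at most one cell straddling $\tau$ contributes at most $\rho(2\gamma)^p$, with $\rho$ the modulus of continuity of $g$) then gives $\coLo_g(\tau-\gamma,\tau+\gamma)\to 0$, whence $M(\delta)\coloneqq\sup\{\coLo_g(s,t):t-s\le\delta\}\to 0$ as $\delta\to 0$. Joint continuity at an arbitrary $(s_0,t_0)$ then follows by reducing, through an intermediate interval, to the comparison of two nested intervals differing at a single endpoint by at most $\delta$: super-additivity gives the lower bound on the increment of $\coLo_g$, while for the upper bound one restricts a near-optimal constrained partition of the larger interval to the smaller one, discarding the cells near the perturbed endpoint, which are controlled by $M(\delta)$ and, via the triangle inequality, by $\rho(\delta)$---both vanishing as $(s,t)\to(s_0,t_0)$.

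Finally, minimality is short. If $\coL(s,t)\le L$ then $\{s,t\}$ is a constrained partition of $[s,t]$, so $|g_{s,t}|^p\le\coLo_g(s,t)$; thus $\coLo_g$ satisfies \cref{SmallestControlCondition}. Conversely, if $\co$ is a control with $|g_{s,t}|\le\co(s,t)^{1/p}$ whenever $\coL(s,t)\le L$, then for any $(u,v)$ and any constrained partition $\{u=x_0<\dots<x_k=v\}$ of $[u,v]$ one has $\sum_{i}|g_{x_{i-1},x_i}|^p\le\sum_{i}\co(x_{i-1},x_i)\le\co(u,v)$ by super-additivity of $\co$; taking the supremum over such partitions gives $\coLo_g(u,v)\le\co(u,v)$.
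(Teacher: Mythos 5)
Your proof is correct and follows essentially the same route the paper intends: the paper offers no written proof, simply invoking Propositions 5.8 and 5.10 of \cite{FV10} "slightly modified", and your argument carries out exactly that modification, with the only genuinely new point being the observation (which you make and justify) that all the partition manipulations used there — refinement, concatenation of constrained partitions, splitting or replacing the cell straddling a point — preserve the constraint $\coL(u,v)\le L$ thanks to the monotonicity and continuity of the control $\coL$. Nothing further is needed.
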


We conclude this introduction to localized $p$-variation spaces by showing a result that holds only for $1$-index functions. Let us recall that $p\geq 1$ is the only interesting case for $1$-index functions (see \cite[Proposition 5.2]{FV10}).
\begin{proposition}\label{EstimateControl}
    Let $g\in \locspac(V)$. Then, the following hold:
    \begin{enumerate}
        \item $\locspac=C^{p\var}$, i.e. if $g$ has finite localized $p$-variation with respect to $(\coL,L)$, then it has finite $p$-variation. Moreover, the following inequalities hold
        \begin{align}&\label{ControlloPerStimaApriori} \norm{g}{p,(\coL,L),[s,t]}^p\le 2^{p-1} \norm{g}{p,(\coL,\frac{L}{2}),[s,t]}^p \quad \forall (s,t)\in \Delta_T;\\
        &\label{CompareVar} 
        \norm{g}{p,[0,T]}^p\lesssim \Big(\frac{L}{\coL(0,T)}\Big)^{p-1}\norm{g}{p,[0,T],(\coL,L)}^p.   
        \end{align}

        \item If $\coL$ is a strictly positive control, then
        \begin{equation*}
            \lim_{L\to 0}\norm{g}{q,(\coL,L),[s,t]}=0,
        \end{equation*}
        for all $q>p\geq 1$, $(s,t)\in \Delta_T$.
    \end{enumerate}
\end{proposition}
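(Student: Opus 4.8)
The plan is to handle the two items separately, with item 1 resting on a dyadic subdivision argument and item 2 on the strict positivity of $\coL$ together with a continuity/compactness argument.

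For item 1, the inclusion $\locspac \subseteq C^{p\var}$ follows once we control an arbitrary partition of $[0,T]$ by a partition all of whose intervals are ``$\coL$-small''. First I would prove \cref{ControlloPerStimaApriori}: given any $[s,t]\in\Pi$ with $\coL(s,t)\le L$, by continuity of $\coL$ and the intermediate value theorem pick $m\in[s,t]$ with $\coL(s,m),\coL(m,t)\le L/2$; since $g$ is a $1$-index function, $|g_{s,t}|=|g_t-g_s|\le |g_m-g_s|+|g_t-g_m|$ and hence $|g_{s,t}|^p\le 2^{p-1}(|g_{s,m}|^p+|g_{m,t}|^p)$. Summing over $\Pi$ and passing to the supremum over partitions refined by these midpoints yields \cref{ControlloPerStimaApriori}. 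For \cref{CompareVar}, I would start from an arbitrary partition $\Pi$ of $[0,T]$ (no mesh restriction). Using superadditivity of $\coL$, only at most $\lceil \coL(0,T)/L\rceil$ of the intervals $[s,t]\in\Pi$ can have $\coL(s,t)>L$; split each such bad interval into at most $\lceil \coL(s,t)/L\rceil +1$ subintervals on which $\coL\le L$ (again by continuity of $\coL$), producing a refined partition $\Pi'\in\parloc$. The number of intervals of $\Pi'$ lying inside a single bad interval is comparable to $\coL(s,t)/L$, and by the elementary inequality $\big(\sum a_i\big)^p\le N^{p-1}\sum a_i^p$ for $N$ terms one bounds $|g_{s,t}|^p$ for a bad interval by $\big(\coL(s,t)/L\big)^{p-1}$ times the sum of $|g_{s,t}|^p$ over the corresponding pieces of $\Pi'$. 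Since $\coL(s,t)\le\coL(0,T)$ for each bad interval, the worst-case prefactor is $(\coL(0,T)/L)^{p-1}$; the good intervals of $\Pi$ are already admissible. Summing gives $\norm{g}{p,[0,T]}^p\lesssim (\coL(0,T)/L)^{p-1}\norm{g}{p,[0,T],(\coL,L)}^p$.

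For item 2, fix $q>p\ge 1$ and $(s,t)\in\Delta_T$. For any admissible partition $\Pi\in\boldsymbol{\Pi}(\coL,L)$ of $[s,t]$ we have, by $q>p$ and $|g_{u,v}|\le |g|_{C^{p\var}} \coL(u,v)^{1/p}$ is not quite what we want---rather I would argue as follows. Write $\sum_{[u,v]\in\Pi}|g_{u,v}|^q \le \Big(\max_{[u,v]\in\Pi}|g_{u,v}|^{q-p}\Big)\sum_{[u,v]\in\Pi}|g_{u,v}|^p \le \Big(\max_{[u,v]\in\Pi}|g_{u,v}|^{q-p}\Big)\,\coLo_g(s,t)$, where $\coLo_g$ is the control from \cref{BestControl} (for the exponent $p$), which is finite. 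It therefore suffices to show that $\sup\{|g_{u,v}| : (u,v)\in\Delta_T,\ \coL(u,v)\le L\}\to 0$ as $L\to 0$. Since $\coL$ is strictly positive, $\coL(u,v)\le L$ for $L$ small forces $v-u$ small uniformly: indeed, if not, there would be a sequence $(u_n,v_n)$ with $v_n-u_n\ge\epsilon$ but $\coL(u_n,v_n)\to 0$, and by compactness of $\Delta_T$ a limit point $(u_\infty,v_\infty)$ with $v_\infty-u_\infty\ge\epsilon>0$ and, by continuity, $\coL(u_\infty,v_\infty)=0$, contradicting strict positivity. Hence as $L\to0$ the admissible increments are supported on intervals of vanishing length, and uniform continuity of $g$ on the compact $[0,T]$ gives $\max_{[u,v]\in\Pi}|g_{u,v}|\to0$ uniformly in $\Pi$; taking the supremum over $\Pi$ then yields $\norm{g}{q,(\coL,L),[s,t]}\to 0$.

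The main obstacle is the bookkeeping in \cref{CompareVar}: one must carefully count, using only superadditivity and continuity of $\coL$, how many admissible subintervals are needed to subdivide each ``bad'' interval and verify that the resulting combinatorial factor is exactly of order $(\coL(0,T)/L)^{p-1}$ rather than something larger; the rest is standard once the covering argument is set up, and item 2 is essentially an equicontinuity statement dressed in control-function language.
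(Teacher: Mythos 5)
Your proof is correct and, for \cref{ControlloPerStimaApriori} and for item 2, it is essentially the paper's argument: the same midpoint insertion (continuity plus superadditivity of $\coL$), the additivity of one-index increments and the power-mean inequality give the factor $2^{p-1}$; and for item 2 the same splitting $\sum|g_{u,v}|^q\le \big(\sup|g_{u,v}|^{q-p}\big)\sum|g_{u,v}|^p$ followed by a compactness/continuity argument using strict positivity of $\coL$ (the paper phrases it as a direct contradiction with a limit point where $\coL=0$ but $|g_{s,t}|>0$, you pass through ``$t-s$ small'' plus uniform continuity of $g$; these are the same argument). The only genuine divergence is \cref{CompareVar}: the paper obtains it by iterating the halving inequality \cref{ControlloPerStimaApriori} about $\log_2\big(\coL(0,T)/L\big)$ times, starting from the observation that the unrestricted variation is the localized one at threshold $\coL(0,T)$, whereas you split each ``bad'' interval of an arbitrary partition into $O\big(\coL(s,t)/L\big)$ admissible pieces in a single step, count the pieces via superadditivity, and use $\big(\sum_i a_i\big)^p\le N^{p-1}\sum_i a_i^p$. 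Both routes are elementary and produce the same prefactor of order $\big(\coL(0,T)/L\big)^{p-1}$; note that your form of the constant is the natural one, and the ratio as displayed in \cref{CompareVar} appears to be inverted (for $L\le\coL(0,T)$ the displayed constant would be at most $1$, which together with the trivial inequality ``localized $\le$ unrestricted'' cannot hold in general), so your bookkeeping in fact corrects a typo rather than revealing a gap.
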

\begin{proof}
\begin{enumerate}
    \item Let us prove \cref{ControlloPerStimaApriori}. The second inequality, \cref{CompareVar}, can be easily obtained by properly iterating the same strategy.

    Given $(s,t) \in \Delta_T$, consider a partition coherent with the localization, namely $P=\{s\eqqcolon t_0,t_1,\dots,t_N\coloneqq t\}\in \parloc$.\\
    Then, the continuity of $\coL$ implies that we can find $s_i\in (t_i,t_{i+1})$ such that $\coL(t_i,s_i)\le \frac{L}{2}$ and $\coL(s_i,t_{i+1})\le \frac{L}{2}$, for all $i=0,\dots,N-1$. Therefore, $Q=P\cup\{s_i\}_i$ is a partition of $[s,t]$ that satisfies the localization $(\coL,\frac{L}{2})$.
    At last, the additivity of $1$-index functions' variation, i.e. $g_{s,t}=g_{s,u}+g_{u,t}$, implies that
    \begin{equation*}
        \sum_{t_i\in P}|g_{t_i,t_{i+1}}|^p\le 2^{p-1}\sum_{r_i \in Q}|g_{r_i,r_{i+1}}|^p.
    \end{equation*}
    The obtained inequality proves the statement.

    \item Notice that
    \begin{equation*}
        \sum_{[s,t]\in P}|g\st|^q\le \norm{g}{p}^p \sup_{(s,t)\in \Delta_T, \coL(s,t)\le L}|g\st|^{q-p},
    \end{equation*}
    for any partition $\Pi\in \parloc$.
    
    Now, let us prove by contradiction that $\lim_{L\to 0}h(L)=0$, where 
    \begin{equation*}
        h(L)=\sup_{(s,t)\in \Delta_T,\coL(s,t)\le L}|g\st|.
    \end{equation*}
    Suppose that $\lim_{L\to 0}h(L)=\delta >0$. Then for all $n \in N$, there exists $(s_n,t_n)\in \Delta_T$ such that $\coL(s_n,t_n)\le \frac{1}{n}$ and $|g_{s_n,t_n}|\geq \frac{\delta}{2}$. Therefore, up to taking a subsequence, there exists $(s,t) \in \Delta_T$ such that 
    \begin{equation*}
        (s_n,t_n)\to (s,t),\quad \coL(s_n,t_n)\to \coL(s,t), \quad |g\st|\geq \frac{\delta}{3.12}.
    \end{equation*}
    So, we find a contradiction because $\coL(s,t)=0$ and $\coL$ is strictly positive.
\end{enumerate}
\end{proof}
\begin{remark}
    The assumptions $q>p$ and $\coL$ being strictly positive cannot be weakened. Indeed, $g(t)=\sqrt{t}$, for $t\in [0,1]$, shows that the third claim is not true for $q=p$. Instead, $\coL\equiv 0$ indicates that the same claim does not hold without requiring strict positiveness.
\end{remark}


\subsection{Localized controlled rough paths}
In this section, we recall the definition of rough paths. 
We also introduce a generalized version of controlled rough paths as defined in \cite{Gub04}.

\begin{definition}
Let $M\in \N$, $p\in[2,3)$. A $p$-rough path is a pair
    \begin{center}
        $\mathbf{Z}=(Z,\Z)\in C^{p\var}(\R^M)\times C^{\frac{p}{2}\var,2}(\R^{M\times M}) $
    \end{center}  
    that satisfies the Chen's relation, namely
    \begin{equation*}
        \Z\st-\Z_{s,u}-\Z_{u,t}=Z_{s,u}\otimes Z_{u,t}, \quad \forall (s,u,t) \in \triple.
    \end{equation*}
Moreover, a $p$-rough path $\mathbf{Z}$ is said geometric if there exists a sequence $\{Z^n\}_n\subset C^{\infty}([0,T],\R^M)$ such that
$\mathbf{Z}^n\st\coloneqq (Z^n\st,\int_s^t Z^n_{s,r}\otimes dZ^n_r)$ converges to $\mathbf{Z}$ with respect to the product topology.
\end{definition}
\begin{remark}
    For any $p$-rough path there exists a control $\omega_Z$ such that
    \begin{center}
        $|Z\st|^p\le \omega_Z(s,t),\quad |\Z\st|^{\frac{p}{2}}\le \omega_Z(s,t)$.
    \end{center}
\end{remark}

\begin{definition}Consider a $p$-rough path $\mathbf{Z}$, a control $\coL$, a positive constant $L$, a Banach space $V$ and denote the space of continuous and linear operators from $\R^M$ to $V$ by $L(\R^M,V)$.

A continuous function $X\in C^{p\var}([0,T],V)$ is a localized controlled rough path with respect to $\mathbf{Z}$ and $(\coL,L)$ if there exists a pair $(X',R^X)\in C^{p\var}(L(\R^M,V))\\\times C_{\coL,L}^{\frac{p}{2}\var,2}(V)$ such that
\begin{equation}
    X\st=X'_sZ\st+R^X\st,\quad \forall (s,t)\in\Delta_T.
\end{equation}
The space of localized controlled rough paths will be denoted by $\contr(V)$.
\end{definition}

The rough integral can be built through \cref{SewingLemma}. 
We report some useful inequalities that can be proved by easily generalizing the corresponding results in \cite{FH20}.

\begin{proposition}\label{ContRouInt}
    Consider two $p$-rough paths $\rp^1,\rp^2$, and two controlled rough paths $(X,X')\in \mathscr{D}^{p\var}_{\rp^1,(\overline{\omega}_1,L_1)}(L(\R^M,V))$, $(Y,Y')\in \mathscr{D}^{p\var}_{\rp^2,(\overline{\omega}_2,L_2)}(L(\R^M,V))$. Then,
    \begin{enumerate}
        \item $(\int_0^tX_rdZ^1_r,X_t)\in \mathscr{D}^{p\var}_{\rp^1,(\overline{\omega}_1,L_1)}$. Moreover, the following estimate holds
        \begin{multline}\label{RouIntEstimate}
            \Big|\int_s^tX_rd\mathbf{Z}_r-X_sZ\st-X'_s\Z\st\Big| \\
            \lesssim 
            \co_{R^X}^{\text{loc}}(s,t)^{\frac{2}{p}}\co_Z(s,t)^{\frac{1}{p}}+\co_{X'}^{\text{loc}}(s,t)^{\frac{1}{p}}\co_Z(s,t)^{\frac{2}{p}}, 
        \end{multline}
        for all  $(s,t)\in \Delta_T$ such that $\overline{\omega}_1(s,t)\le L_1$.

        \item The following inequality holds
        \begin{equation*}
        \begin{aligned}
            \Big\|\int_0^{\cdot}X_rdZ^1_r-\int_0^{\cdot}Y_rdZ^2_r\Big\|_{p,[0,T]}^p&\lesssim_{L_1,L_2} |X-Y|^p_{C_t}\co_{Z^1}(0,T)+
            |X|^p_{C_t}\co_{Z^1-Z^2}(0,T)\\
            &+|X'|_{C_t}^p\co_{Z^1-Z^2}(0,T)^2+|X'-Y'|_{C_t}^p\co_{Z^2}(0,T)^2\\
            &+\co_{Z^1}(0,T)\co^{\text{loc}}_{R^X-R^Y}(0,T)^2+\co_{Z^1-Z^2}(0,T)\co^{\text{loc}}_{R^Y}(0,T)^2\\
            &+\co_{Y'}^{\text{loc}}(0,T)\co_{Z^1-Z^2}(0,T)^2+\co_{Z^2}(0,T)^2\co^{\text{loc}}_{X'-Y'}(0,T),
        \end{aligned}
        \end{equation*}
        where $|f|_{C_t}$ denotes $\sup_{s \in [0,T]}|f(s)|$.
    \end{enumerate}
\end{proposition}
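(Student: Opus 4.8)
The plan is to reduce both statements to the Sewing Lemma applied to the natural compensated germ. For part (1), set $\Xi_{s,t} := X_s Z^1_{s,t} + X'_s \mathbf{Z}^1_{s,t}$; this is precisely the "first-order Taylor expansion" of the would-be integral, and $\delta\Xi_{s,r,t}$ can be computed using Chen's relation together with the defining expansion $X_{s,r} = X'_s Z^1_{s,r} + R^X_{s,r}$, giving $\delta\Xi_{s,r,t} = -R^X_{s,r} Z^1_{r,t} - X'_{s,r} \mathbf{Z}^1_{r,t}$. Each term is controlled, on pairs with $\overline{\omega}_1(s,t)\le L_1$, by $\co^{\mathrm{loc}}_{R^X}(s,t)^{2/p}\co_{Z^1}(s,t)^{1/p}$ and $\co^{\mathrm{loc}}_{X'}(s,t)^{1/p}\co_{Z^1}(s,t)^{2/p}$ respectively, so $\delta\Xi$ has finite $\frac{p}{3}$-variation in the localized sense (here $p<3$ is what makes the exponent sum exceed $1$). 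The localized Sewing Lemma then produces the integral and the estimate \cref{RouIntEstimate}; that $X_t$ is the Gubinelli derivative is immediate from $\int_s^t X\,d\mathbf{Z} - \Xi_{s,t} = o(\co_{Z^1}(s,t)^{1/p})$ and $Z^1_{s,t}$ being the leading term.

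For part (2) the strategy is a term-by-term telescoping of the difference of the two compensated germs. Write $\Xi^1_{s,t} = X_s Z^1_{s,t} + X'_s\mathbf{Z}^1_{s,t}$ and $\Xi^2_{s,t} = Y_s Z^2_{s,t} + Y'_s\mathbf{Z}^2_{s,t}$, and decompose $\Xi^1 - \Xi^2$ by adding and subtracting the mixed quantities $X_s Z^2_{s,t}$, $X'_s\mathbf{Z}^2_{s,t}$, etc., so that each resulting summand isolates a single difference factor ($X-Y$, $X'-Y'$, $Z^1 - Z^2$, $\mathbf{Z}^1 - \mathbf{Z}^2$) multiplied by bounded quantities. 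The sewing bound applied to $\delta(\Xi^1-\Xi^2)$, whose pieces one expands exactly as in part (1) but now on the differences, yields a control on the remainder of $\int X\,d Z^1 - \int Y\,d Z^2$ that is a sum of products of the eight listed $\co$-quantities; adding back the leading terms $X_\cdot Z^1 + X'_\cdot \mathbf{Z}^1 - Y_\cdot Z^2 - Y'_\cdot\mathbf{Z}^2$ and again splitting each factor produces exactly the eight-term right-hand side in the statement, after bounding the $\frac{p}{2}$-variation of $\mathbf{Z}^i$ by $\co_{Z^i}(0,T)^2$ and passing from the localized to the full $p$-variation via \cref{CompareVar}, which is where the implicit constant's dependence on $L_1, L_2$ enters.

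The routine but slightly delicate points are bookkeeping: keeping track of which factor is a $C_t$-sup norm versus a $\co^{\mathrm{loc}}$-control quantity, and making sure every cross term in the telescoping lands in one of the eight allowed shapes (in particular that no term with two "difference" factors survives — each such product is absorbed into a single-difference term by boundedness of the other factors). I expect the main obstacle to be the correct use of the \emph{localized} Sewing Lemma on $\Delta_T$ rather than on a single small interval: one must invoke \cref{EstimateControl} (inequality \cref{CompareVar}) to convert the localized variation norm of the sewn remainder into a genuine $p$-variation norm over $[0,T]$, and to control the accumulated error one patches together the small-interval estimates along a partition subordinate to $(\overline{\omega},L)$, which is exactly the mechanism that forces the $L$-dependent constant. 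Everything else — Chen's relation, the algebra of $\delta$ on products, and the exponent arithmetic ensuring $\frac{3}{p}>1$ — is a direct generalization of the corresponding computations in \cite{FH20}.
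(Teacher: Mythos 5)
Your proposal is correct and follows exactly the route the paper intends: the paper gives no proof of \cref{ContRouInt}, stating only that it is obtained by easily generalizing the corresponding results of \cite{FH20} (construction and stability of the rough integral) via the localized Sewing Lemma (\cref{SewingLemma}). Your germ $\Xi\st=X_sZ^1\st+X'_s\Z^1\st$ with $\delta\Xi_{s,r,t}=-R^X_{s,r}Z^1_{r,t}-X'_{s,r}\Z^1_{r,t}$, the exponent count $3/p>1$, the telescoped difference germ for part (2), and the passage from localized to global $p$-variation via \cref{EstimateControl} are precisely that generalization.
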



\subsection{Unbounded rough drivers and 
smoothing operators}\label{sec:URD&SO}
In order to rewrite our equation \eqref{eq:origEuler} in a more treatable form, we rewrite the integral against $Z$ as a Taylor expansion up to order two. In particular, this decomposition produces some sort of ``operator-valued'' rough path, i.e. linear unbounded operators enjoying rough path properties, and an additional remainder.

Let $\{E^n\}_{n\in\N}$ be a family of Banach spaces such that there is a continuous embedding $E^m \subseteq E^n$ for all $m \geq n$. 
From now on, we denote their dual spaces by $E^{-n}$. 

The ``operator-valued'' rough path can be formally defined as follows.
\begin{definition}
	Let $p \in [2,3)$. A continuous unbounded $p$-rough driver on the scales $\{E^n\}_{n\in\N}$ is a pair $(A^1,A^2)$ of $2$-index maps such that, for all $s,t\in\Delta_T$, 
	\begin{gather*}
	A^1_{s,t} \in L(E^{-n}, E^{-n-1}) \text{ for } n \in \{0,1,2\} , \\
	A^2_{s,t} \in L(E^{-n}, E^{-n-2}) \text{ for } n \in \{0,1\} .
	\end{gather*}
	Moreover, Chen's relation holds
	\begin{equation}\label{eq:chenurd}
	A^1_{s,t} = A^1_{s,r} + A^1_{r,t} \ 
	\quad 
	A^2_{s,t} = A^2_{s,r} + A^2_{r,t} + A^1_{r,t} A^1_{s,r}, 
	\qquad 
	\forall (s,r,t)\in\triple 
	\end{equation}
	 and there exists a control $\omega_A$ such that 
	\begin{equation}\label{eq:controlurd}
	\norm{A^1_{s,t}}{L(E^{-n}, E^{-n-1})} \leq \omega_A(s,t)^{\frac{1}{p}} , 
	\qquad 
	\norm{A^2_{s,t}}{L(E^{-n}, E^{-n-2})} \leq \omega_A(s,t)^{\frac{2}{p}} .
	\end{equation}
\end{definition}

We also need to introduce the concept of smoothing operators. They are crucial for estimating the time regularity of the remainder.

\begin{definition}\label{def:smothing}
    A family of operators $(J^{\eta})_{\eta \in [0,1]}$ is said to be a family of smoothing operators on the scale $\{E^n\}_{n\in \N}$ if the following inequalities hold
    \begin{align}
        &\norm{J^{\eta}\psi}{E^{n+m}}\lesssim\eta^{-m} \norm{\psi}{E^{n}},\\
        &\norm{(I-J^{\eta})\psi}{E^{n}}\lesssim\eta^{m} \norm{\psi}{E^{n+m}},
    \end{align}
     for all $\psi \in E^{n+m}$ and $n,m\in \N$.
\end{definition}


\section{Existence of solutions}\label{sec:Exist}
In this section, we aim to show the existence of solutions in the sense of \cref{def:roughsolution}.

\subsection{A priori estimates}

We start with a priori estimates on solutions and their remainders. To prove them, we use the following smoothing operators. 

\begin{lemma}\label{smoothing1}
    Consider a (symmetric) mollifier $\rho$ with support on the ball of radius equal to 1. Then, $(J^{\eta})_{\eta \in [0,1]}$, defined by $J^{\eta}\psi=\rho_{\eta}\ast \psi$, is a family of smoothing operators on the scale $W^{n,1}(\T^2)$ in the sense of \cref{def:smothing}.
\end{lemma}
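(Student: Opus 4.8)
The plan is to verify directly the two inequalities of \cref{def:smothing} for the scale $E^n = W^{n,1}(\T^2)$, using only the convolution structure of $J^\eta$, Young's inequality, and Taylor's formula. Write $\rho_\eta(x) = \eta^{-2}\rho(x/\eta)$, so that $\int_{\T^2}\rho_\eta = 1$; since $\rho$ is supported in the unit ball, convolution with $\rho_\eta$ on $\T^2$ is the usual one, it commutes with differentiation, and its $\eta$-dependence is identical to the Euclidean case.

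First I would prove $\norm{J^\eta\psi}{W^{n+m,1}} \lesssim \eta^{-m}\norm{\psi}{W^{n,1}}$. For a multi-index $\alpha$ with $|\alpha|\le n$ one simply has $\norm{D^\alpha(\rho_\eta\ast\psi)}{L^1} = \norm{\rho_\eta\ast D^\alpha\psi}{L^1}\le\norm{D^\alpha\psi}{L^1}$. For $n<|\alpha|\le n+m$, decompose $\alpha = \beta+\gamma$ with $|\beta| = n$ and $1\le|\gamma|\le m$, write $D^\alpha(\rho_\eta\ast\psi) = (D^\gamma\rho_\eta)\ast(D^\beta\psi)$, and combine the scaling identity $\norm{D^\gamma\rho_\eta}{L^1} = \eta^{-|\gamma|}\norm{D^\gamma\rho}{L^1}\le\eta^{-m}\norm{D^\gamma\rho}{L^1}$ (recall $\eta\le1$) with Young's inequality. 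Summing over the finitely many $\alpha$ gives the claim.

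Next I would prove $\norm{(I-J^\eta)\psi}{W^{n,1}} \lesssim \eta^{m}\norm{\psi}{W^{n+m,1}}$. Since derivatives commute with the convolution, it is enough to establish $\norm{(I-J^\eta)\varphi}{L^1}\lesssim\eta^m\norm{\varphi}{W^{m,1}}$ and then apply this to $\varphi = D^\beta\psi$ for each $|\beta|\le n$. Using $\int\rho_\eta = 1$, write $(I-J^\eta)\varphi(x) = \int\rho_\eta(y)\,(\varphi(x)-\varphi(x-y))\,dy$ and expand $\varphi(x-y)$ around $x$ by Taylor's formula with integral remainder of order $m$. The polynomial corrections of degree $1\le|\gamma|\le m-1$ pick up the factor $\int\rho_\eta(y)\,y^\gamma\,dy = \eta^{|\gamma|}\int\rho(z)\,z^\gamma\,dz$, which vanishes for $|\gamma|$ odd by symmetry of $\rho$; in particular the whole polynomial part disappears when $m\le2$ (for larger $m$ one would in addition ask $\rho$ to have vanishing moments up to order $m-1$, but only the low-order cases enter the sequel). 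The remaining integral remainder is dominated by $C\int\rho_\eta(y)\,|y|^m\int_0^1|D^m\varphi(x-sy)|\,ds\,dy$, which after integrating in $x$ and using $\int\rho_\eta(y)|y|^m\,dy = \eta^m\int\rho(z)|z|^m\,dz$ yields the bound; summing over $|\beta|\le n$ concludes.

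Both steps are essentially routine; the only point that needs care is the cancellation of the lower-order Taylor terms in the second estimate, which is exactly where the symmetry of $\rho$ is used. I do not anticipate any genuine obstacle beyond bookkeeping the rescaled moments of $\rho_\eta$.
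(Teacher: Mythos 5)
Your argument is correct and is the standard mollification proof; the paper states this lemma without proof, so there is no alternative argument to compare against, and your two steps (Young's inequality combined with the scaling $\norm{D^\gamma\rho_\eta}{L^1}=\eta^{-|\gamma|}\norm{D^\gamma\rho}{L^1}$, and the Taylor expansion in which the symmetry of $\rho$ cancels the first-order moment) are exactly what is needed. Your caveat about $m\ge 3$ is accurate rather than a defect of your proof: for a nonnegative symmetric mollifier the second moments $\int\rho(z)z_i^2\,dz$ are strictly positive, so the bound $\norm{(I-J^\eta)\psi}{W^{n,1}}\lesssim\eta^{m}\norm{\psi}{W^{n+m,1}}$ genuinely fails for $m\ge 3$, and the lemma satisfies \cref{def:smothing} as literally stated (all $n,m\in\N$) only after restricting to $m\le 2$ or imposing vanishing higher moments on $\rho$. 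This restriction is harmless for the paper, since only $m\in\{1,2\}$ is ever used (in \cref{lemma:apriori,lemma:apriorisol}), but it is worth flagging as a limitation of the statement rather than of your argument.
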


The first estimate is on the remainder $w^{\natural}$, defined in \cref{def:roughsolution}. We denote by $\omega_{\natural}\coloneqq \omega_{w^{\natural}}^{\text{loc}}$.
\begin{lemma}\label{lemma:apriori}
	Let $w$ be a solution of the problem, in the sense of \cref{def:roughsolution}. Then there exists $\overline{L}>0$ such that
	\begin{equation}\label{eq:aprioriremainder}
	\omega_\natural(s,t) 
	\lesssim_p \norm{w}{L^\infty_{t,x}}^{\frac{p}{3}} \omega_A(s,t)
	+ \norm{w}{L^\infty_{t,x}}^{\frac{2p}{3}}  |t-s|^{\frac{p}{3}} \left( \omega_A(s,t)^{\frac{1}{3}} + \omega_A(s,t)^{\frac{2}{3}} \right),
	\end{equation}
	for all $(s,t) \in \Delta_T$ s.t. $\coL(s,t)\le L$ and $\co_{A}(s,t)\le \overline{L}$.
\end{lemma}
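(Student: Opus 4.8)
The plan is to run the standard unbounded--rough--driver bootstrap: derive the relation satisfied by the defect $\delta w^\natural$, estimate it, feed it into a sewing/telescoping argument, and close the estimate by absorbing a small multiple of $\omega_\natural$ coming from the smallness of $\omega_A(s,t)\le\overline L$. For the defect identity: since $t\mapsto w_t$ is a path and $\mu_{s,t}=-\int_s^t u_r\sg w_r\,dr$ is additive in $(s,t)$, one has $\delta(w-\mu)_{s,r,t}=0$; plugging the defining relation of \cref{def:roughsolution} into $\delta w^\natural_{s,r,t}$ and using Chen's relation \cref{eq:chenurd} for $A^1,A^2$ gives $\delta w^\natural_{s,r,t}=(A^1_{r,t}+A^2_{r,t})w_{s,r}-A^1_{r,t}A^1_{s,r}w_s$; substituting once more $w_{s,r}=\mu_{s,r}+(A^1_{s,r}+A^2_{s,r})w_s+w^\natural_{s,r}$ and cancelling the repeated term leaves
\[
\delta w^\natural_{s,r,t}=(A^1_{r,t}+A^2_{r,t})\mu_{s,r}+\big(A^1_{r,t}A^2_{s,r}+A^2_{r,t}A^1_{s,r}+A^2_{r,t}A^2_{s,r}\big)w_s+(A^1_{r,t}+A^2_{r,t})w^\natural_{s,r}.
\]

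Next I would estimate each summand in $W^{-3,1}$. For the drift pieces: $\div u=0$ and the Biot--Savart bound $\|u_r\|_\infty\lesssim\|w_r\|_\infty$ on $\T^2$ give $w_ru_r\in L^\infty$ with $\|w_ru_r\|_\infty\lesssim\|w\|_{L^\infty_{t,x}}^2$, hence $\|\mu_{s,r}\|_{W^{-1,1}}\lesssim\|w\|_{L^\infty_{t,x}}^2|r-s|$, and together with \cref{eq:controlurd} this yields $\|(A^1_{r,t}+A^2_{r,t})\mu_{s,r}\|_{W^{-3,1}}\lesssim\|w\|_{L^\infty_{t,x}}^2|r-s|\big(\omega_A(r,t)^{1/p}+\omega_A(r,t)^{2/p}\big)$. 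For the purely operatorial pieces, $\|w_s\|_{W^{0,1}}\le\|w\|_{L^\infty_{t,x}}$, \cref{eq:controlurd} and super-additivity give a bound $\lesssim\|w\|_{L^\infty_{t,x}}\omega_A(s,t)^{3/p}$ (the $A^2_{r,t}A^2_{s,r}$ composition being of order $\omega_A^{4/p}$, hence subdominant). For the pieces containing $w^\natural_{s,r}$ — controlled only in $W^{-3,1}$, i.e.\ one, resp.\ two, scales below the domains on which $A^1,A^2$ are defined — one invokes the smoothing operators of \cref{smoothing1} to interpolate between scales: splitting according to $J^\eta+(I-J^\eta)$, bounding the regular part through $\|w^\natural_{s,r}\|_{W^{-3,1}}\le\omega_\natural(s,r)^{3/p}$ (valid for $\overline\omega(s,r)\le L$ by \cref{BestControl}) and optimising the parameter $\eta$, one gets a contribution which is a positive power of $\omega_A$ times the $\omega_\natural$-control. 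The crucial point — this is precisely where $p<3$ is used — is that the minimal scaling exponent among the operatorial pieces, namely $3/p$, exceeds $1$, so that (together with the $\eta$-optimised $w^\natural$-pieces) every summand of $\delta w^\natural_{s,r,t}$ has total degree strictly larger than $1$ in suitable super-additive controls.

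Consequently, a sewing argument in the spirit of \cref{SewingLemma} — using that $w^\natural$ has finite localised $p/3$-variation, so that $\sum_{[u,v]\in\Pi}w^\natural_{u,v}\to0$ along refinements, and telescoping $w^\natural_{s,t}-\sum_{[u,v]\in\Pi}w^\natural_{u,v}$ over a binary subdivision of $[s,t]$ while balancing all pieces against one combined control — yields, for $\overline\omega(s,t)\le L$ and $\omega_A(s,t)\le\overline L$,
\[
\|w^\natural_{s,t}\|_{W^{-3,1}}\lesssim_p\|w\|_{L^\infty_{t,x}}\omega_A(s,t)^{3/p}+\|w\|_{L^\infty_{t,x}}^2|t-s|\big(\omega_A(s,t)^{1/p}+\omega_A(s,t)^{2/p}\big)+\omega_A(s,t)^{\varepsilon}\omega_\natural(s,t)^{3/p}
\]
for some $\varepsilon>0$. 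Raising to the power $\tfrac{p}{3}$ (here $\tfrac{p}{3}<1$), summing over a partition of $[s,t]$ subordinate to the localisation $(\overline\omega,L)$, and using super-additivity of $\omega_A$ and of $|\cdot|$ together with the Hölder bound $\sum_i|t_{i+1}-t_i|^{p/3}\omega_A(t_i,t_{i+1})^{1/3}\lesssim|t-s|^{p/3}\omega_A(s,t)^{1/3}$ (and likewise with exponent $2/3$), the first two terms add up to the right-hand side of \cref{eq:aprioriremainder}, while the last is $\le C\overline L^{\varepsilon p/3}\,\omega_\natural(s,t)$. Choosing $\overline L$ so small that $C\overline L^{\varepsilon p/3}\le\tfrac12$ lets us absorb it into $\omega_\natural(s,t)=\sup_\Pi\sum_{[u,v]\in\Pi}\|w^\natural_{u,v}\|_{W^{-3,1}}^{p/3}$, which is finite precisely because $w$ is a solution, and \cref{eq:aprioriremainder} follows.

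The main obstacle is the treatment of the self-referential terms $(A^1_{r,t}+A^2_{r,t})w^\natural_{s,r}$: one must combine the smoothing operators with a careful choice of $\eta$ so that, simultaneously, the resulting bound has total degree strictly larger than $1$ in a single super-additive control (so the sewing/telescoping converges — this is where $p<3$ is essential) and carries only a positive power of $\omega_A$ in front of the $\omega_\natural$-control (so the final absorption goes through once $\overline L$ is small). The rest is bookkeeping: checking, via $\sigma_j\in C^3(\T^2,\R^2)$, that every operator composition and every dual pairing appearing above is well defined on the relevant Sobolev scale.
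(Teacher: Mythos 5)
Your overall architecture is the paper's: compute $\delta w^\natural$ via Chen's relation, use the smoothing operators of \cref{smoothing1}, apply the sewing corollary \cref{ineq: corSewing}, and close by absorbing the $\omega_\natural$-term; your defect identity is correct (it is the fully expanded form of the paper's identity $\delta w^\natural_{s,r,t}=A^2_{r,t}w_{s,r}+A^1_{r,t}w^\dagger_{s,r}$ with $w^\dagger_{s,t}=w_{s,t}-A^1_{s,t}w_s$). However, there is a genuine gap exactly at the step you single out as the main obstacle, namely the self-referential terms $(A^1_{r,t}+A^2_{r,t})w^\natural_{s,r}$. You split the test function as $J^\eta\psi+(I-J^\eta)\psi$ but only explain how to bound the regular part (via $\norm{w^\natural_{s,r}}{W^{-3,1}}\le\omega_\natural(s,r)^{3/p}$ and the $\eta^{-m}$ gain of $J^\eta$). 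The rough part $w^\natural_{s,r}\big(A^{i,*}_{r,t}(I-J^\eta)\psi\big)$ cannot be bounded at all in your fully expanded form: pairing against $w^\natural_{s,r}$ requires $A^{i,*}_{r,t}(I-J^\eta)\psi\in W^{3,1}$, hence control of $(I-J^\eta)\psi$ in $W^{4,1}$ or $W^{5,1}$, and the estimates for $I-J^\eta$ gain positive powers of $\eta$ only when the output norm is of \emph{lower} order than $\norm{\psi}{W^{3,1}}$ --- they give nothing above order three. This is precisely why the paper splits the test function \emph{before} deciding whether to expand and, on the $(I-J^\eta)$-part, keeps the $L^\infty$-bounded combinations $w_{s,r}$ and $w^\dagger_{s,r}$ unexpanded, extracting the gain from $\norm{(I-J^\eta)\psi}{W^{1,1}}\lesssim\eta^2\norm{\psi}{W^{3,1}}$ and $\norm{(I-J^\eta)\psi}{W^{2,1}}\lesssim\eta\norm{\psi}{W^{3,1}}$. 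Your argument can be repaired by re-grouping on the rough part (e.g. writing $w^\natural_{s,r}=w_{s,r}-\mu_{s,r}-(A^1_{s,r}+A^2_{s,r})w_s$ and using $\norm{w}{L^\infty_{t,x}}$), but as written this step fails.

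Two smaller points. First, the composition $A^2_{r,t}A^2_{s,r}w_s$ is not covered by \cref{eq:controlurd} as a map into $W^{-3,1}$ (the $A^2$-bounds hold only for $n\in\{0,1\}$), so it is not simply ``of order $\omega_A^{4/p}$, hence subdominant'': it also needs the smoothing operator, at the cost of a factor $\eta^{-1}$, exactly as in the paper's estimate of the $J^\eta$-part; it fits your machinery but is not mere bookkeeping. Second, your absorption mechanism (a positive power of $\omega_A$ in front of $\omega_\natural$, then $\overline{L}$ small) differs from the paper's choice $\eta=\lambda\,\omega_A(s,t)^{1/p}$ with $\lambda$ large and $\overline{L}$ only ensuring $\eta\le1$; if you let $\eta$ scale like $\omega_A^{(1-\varepsilon)/p}$ you must recheck that each resulting term, raised to the power $p/3$, is still dominated by a superadditive control (products $\omega_1^{\alpha}\omega_2^{\beta}$ with $\alpha+\beta\ge1$), and the $(I-J^\eta)$-contributions then produce exponents of $\omega_A$ strictly below those in \cref{eq:aprioriremainder}; the paper's choice yields the stated exponents directly.
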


\begin{proof}
	Let $(s,r,t) \in \Delta_T^2$ such that $\coL(s,t)\le L$. It follows from \cref{eq:rougheuler} that, for all $\psi\in W^{3,1}$, 
	\begin{align*}
	\delta w ^\natural_{s,r,t}(\psi)
	={} & - w_s \left( \left[ A^{1,*}_{s,t} + A^{2,*}_{s,t} \right] \psi \right) + w_s \left( \left[ A^{1,*}_{s,r} + A^{2,*}_{s,r} \right] \psi \right) \\ 
    & + w_r \left( \left[ A^{1,*}_{r,t} + A^{2,*}_{r,t} \right] \psi \right) \\
	={} & w_{s,r} \left( A^{2,*}_{r,t} \psi \right) + \left( w_{s,r} - w_s A^{1,*}_{s,r} \right) \left( A^{1,*}_{r,t} \psi \right) \\
	={} & w_{s,r} \left( A^{2,*}_{r,t} \psi \right) + w^\dagger_{s,r} \left( A^{1,*}_{r,t} \psi \right) ,
	\end{align*}
	where in particular we have exploited \cref{eq:chenurd} and defined 
	\begin{equation}\label{eq:wdagger}
	w^\dagger_{s,t} \coloneqq w_{s,t} - A^1_{s,t} w_s = \mu_{s,t} + A^2_{s,t} w_s + w^\natural_{s,t} .
	\end{equation}
	
	Let now $J^\eta$ be a smoothing operator, as defined in \cref{smoothing1}: we are going to apply it to $\psi$ in order to separately analyze a rougher and a smoother part of its duality product against $\delta w^\natural$. Starting with the former,
	\begin{align*}
	| \delta w^\natural_{s,r,t} \left( (I-J^\eta) \psi \right) | 
	\leq{} & 
	\left| w_{s,r} \left( A^{2,*}_{r,t} (I-J^\eta) \psi \right) \right| + \left| w^\dagger_{s,r} \left( A^{1,*}_{r,t} (I-J^\eta) \psi \right) \right| \\
	\lesssim{} & 
	\norm{w}{L^\infty_{t,x}} \Big[ \norm{A^{2,*}_{r,t} (I-J^\eta) \psi}{L^1} + \norm{A^{1,*}_{r,t} (I-J^\eta) \psi}{L^1} \\
	& + \norm{A^{1,*}_{s,r} A^{1,*}_{r,t} (I-J^\eta) \psi}{L^1} \Big] \\
	\lesssim{} & 
	\norm{w}{L^\infty_{t,x}} \Big[ \omega_A(s,t)^{\frac{2}{p}} \norm{(I-J^\eta) \psi}{W^{2,1}} \\
    & + \omega_A(s,t)^{\frac{1}{p}} \norm{(I-J^\eta) \psi}{W^{1,1}} \Big] \\
	\leq & 
	\norm{w}{L^\infty_{t,x}} \Big[ \omega_A(s,t)^{\frac{2}{p}} \eta + \omega_A(s,t)^{\frac{1}{p}} \eta^2 \Big] \norm{\psi}{W^{3,1}} .
	\end{align*}
	
	As for the smoothed part,
	\begin{align*}
	| \delta w^\natural_{s,r,t} \left( J^\eta \psi \right)
	| 
	\leq{} & 
	\left| w_{s,r} \left( A^{2,*}_{r,t} J^\eta \psi \right) \right| + \left| w^\dagger_{s,r} \left( A^{1,*}_{r,t} J^\eta \psi \right) \right| \\
	\leq{} & 
	\left| \mu_{s,r} \left( A^{2,*}_{r,t} J^\eta \psi \right) \right| + \left| w_s \left( A^{1,*}_{s,r} A^{2,*}_{r,t} J^\eta \psi \right) \right| + \left| w_s \left( A^{2,*}_{s,r} A^{2,*}_{r,t} J^\eta \psi \right) \right| \\
	& + \left| w^\natural_{s,r} \left( A^{2,*}_{r,t} J^\eta \psi \right) \right| 
	+ \left| \mu_{s,r} \left( A^{1,*}_{r,t} J^\eta \psi \right) \right| + \left| w_s \left( A^{2,*}_{s,r} A^{1,*}_{r,t} J^\eta \psi \right) \right| \\
	& + \left| w^\natural_{s,r} \left( A^{1,*}_{r,t} J^\eta \psi \right) \right| \\ 
	\leq{} & 
	\norm{w}{L^\infty_{t,x}}^2 \norm{A^{2,*}_{r,t} J^\eta \psi}{W^{1,1}}|t-s| + \norm{w}{L^\infty_{t,x}} \norm{A^{1,*}_{s,r} A^{2,*}_{r,t} J^\eta \psi}{L^1} \\
	& + \norm{w}{L^\infty_{t,x}} \norm{A^{2,*}_{s,r} A^{2,*}_{r,t} J^\eta \psi}{L^1} + \norm{w^\natural_{s,r}}{W^{-3,1}} \norm{A^{2,*}_{r,t} J^\eta \psi}{W^{3,1}} \\
	& + \norm{w}{L^\infty_{t,x}}^2 \norm{A^{1,*}_{r,t} J^\eta \psi}{W^{1,1}}|t-s| + \norm{w}{L^\infty_{t,x}} \norm{A^{2,*}_{s,r} A^{1,*}_{r,t} J^\eta \psi}{L^1} \\
	& + \norm{w^\natural_{s,r}}{W^{-3,1}} \norm{A^{1,*}_{r,t} J^\eta \psi}{W^{3,1}} .
	\end{align*}
	Due to the regularity properties of both the unbounded rough drivers and the smoothing operators, we can rewrite the estimate as 
	\begin{align*}
	| \delta w^\natural_{s,r,t} ( J^\eta \psi ) | 
	\leq{} & 
	\norm{w}{L^\infty_{t,x}}^2 \norm{J^\eta \psi}{W^{3,1}} \omega_A(s,t)^{\frac{2}{p}} |t-s| + \norm{w}{L^\infty_{t,x}} \norm{J^\eta \psi}{W^{3,1}} \omega_A(s,t)^{\frac{3}{p}} \\
	& + \norm{w}{L^\infty_{t,x}} \norm{J^\eta \psi}{W^{4,1}} \omega_A(s,t)^{\frac{4}{p}} + \omega_\natural(s,t)^{\frac{3}{p}} \norm{J^\eta \psi}{W^{5,1}} \omega_A(s,t)^{\frac{2}{p}} \\
	& + \norm{w}{L^\infty_{t,x}}^2 \norm{J^\eta \psi}{W^{2,1}} \omega_A(s,t)^{\frac{1}{p}} |t-s| + \norm{w}{L^\infty_{t,x}} \norm{J^\eta \psi}{W^{3,1}} \omega_A(s,t)^{\frac{3}{p}} \\
	& + \omega_\natural(s,t)^{\frac{3}{p}} \norm{J^\eta \psi}{W^{4,1}} \omega_A(s,t)^{\frac{1}{p}} \\
	\leq{} &
	\norm{w}{L^\infty_{t,x}}^2 \norm{\psi}{W^{3,1}} \omega_A(s,t)^{\frac{2}{p}} |t-s| + \norm{w}{L^\infty_{t,x}} \norm{\psi}{W^{3,1}} \omega_A(s,t)^{\frac{3}{p}} \\
	& + \eta^{-1} \norm{w}{L^\infty_{t,x}} \norm{\psi}{W^{3,1}} \omega_A(s,t)^{\frac{4}{p}} + \eta^{-2} \omega_\natural(s,t)^{\frac{3}{p}} \norm{\psi}{W^{3,1}} \omega_A(s,t)^{\frac{2}{p}} \\
	& + \norm{w}{L^\infty_{t,x}}^2 \norm{\psi}{W^{3,1}} \omega_A(s,t)^{\frac{1}{p}} |t-s| + \norm{w}{L^\infty_{t,x}} \norm{\psi}{W^{3,1}} \omega_A(s,t)^{\frac{3}{p}} \\
	& + \eta^{-1} \omega_\natural(s,t)^{\frac{3}{p}} \norm{\psi}{W^{3,1}} \omega_A(s,t)^{\frac{1}{p}} .
	\end{align*}
	
	Putting back together $(I-J^\eta)\psi$ and $J^\eta \psi$, we can set $\eta = \omega_A(s,t)^{1/p} \lambda$ for some $\lambda>0$ constant. This yields 
	\begin{multline*}
	\norm{\delta w^\natural_{s,r,t}}{W^{-3,1}}
	\lesssim{} 
	(\lambda^{-1} + 1 + \lambda + \lambda^2) \norm{w}{L^\infty_{t,x}} \omega_A(s,t)^{\frac{3}{p}} \\
	+ \norm{w}{L^\infty_{t,x}}^2 \left( \omega_A(s,t)^{\frac{1}{p}} + \omega_A(s,t)^{\frac{2}{p}} \right) |t-s| + (\lambda^{-2} + \lambda^{-1}) \omega_\natural(s,t)^{\frac{3}{p}} .
	\end{multline*}
	Now, \cref{ineq: corSewing} implies that 
	\begin{multline*}
	\omega_\natural(s,t) 
	\leq
	C_p 
	\Big[ (\lambda^{-1} + 1 + \lambda + \lambda^2)^{\frac{p}{3}} \norm{w}{L^\infty_{t,x}}^{\frac{p}{3}} \omega_A(s,t) \\
	+ \norm{w}{L^\infty_{t,x}}^{\frac{2p}{3}} \left( \omega_A(s,t)^{\frac{1}{3}} + \omega_A(s,t)^{\frac{2}{3}} \right) |t-s|^{\frac{p}{3}} + (\lambda^{-2} + \lambda^{-1})^{\frac{p}{3}} \omega_\natural(s,t) \Big] .
	\end{multline*}
	It is now sufficient to choose $\lambda$ in a way such that $C_p (\lambda^{-2} + \lambda^{-1})^{p/3} \leq \frac{1}{2}$ and $\bar{L}>0$ such that $\eta\leq1$ to obtain the thesis. 
\end{proof}

\begin{lemma}\label{lemma:apriorisol}
	Let $w$ be a solution of the problem, in the sense of \cref{def:roughsolution}. Then $w\in C^{p\var}_T W^{-1,1}$ and there exists $\overline{L}>0$ such that, given $\omega_w(s,t) \coloneqq \norm{w}{p\var,[s,t]}^p$,
	\begin{equation}\label{eq:apriorisol}
	\omega_w(s,t) 
	\lesssim_p \left( 1 + \norm{w}{L^\infty_{t,x}} \right)^{2p} \left( |t-s|^p + \omega_A(s,t) + \omega_\natural(s,t) \right) ,
	\end{equation}
	for all $(s,t) \in \Delta_T$ s.t. $\coL(s,t)\le L$ and $\co_{A}(s,t), \co_\natural(s,t)\le \overline{L}$.
\end{lemma}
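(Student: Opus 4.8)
To establish \cref{eq:apriorisol} I would estimate $\norm{w_{s,t}}{W^{-1,1}}$ by testing against an arbitrary $\psi \in W^{1,1}$ with $\norm{\psi}{W^{1,1}}\le 1$, using the rough--Euler identity \cref{eq:rougheuler} to split the pairing, and then transferring the information on $w^{\natural}$ and $A^2 w$ --- which only live on the coarse scales $W^{-3,1}$ and $W^{-2,1}$ --- down to the scale $W^{-1,1}$ by inserting a smoothing operator; raising the resulting bound to the power $p$ and summing over partitions then yields the $p$-variation estimate. Three elementary ingredients are used repeatedly: $\norm{u}{L^{\infty}_{t,x}}\lesssim\norm{w}{L^{\infty}_{t,x}}$, because the Biot--Savart kernel on $\T^2$ belongs to $L^1$; integration by parts together with $\div\sigma_j=0$ and $\sigma_j\in C^3$ give $\norm{A^{1,*}_{s,t}\psi}{L^1}\lesssim\omega_A(s,t)^{1/p}\norm{\psi}{W^{1,1}}$ and $\norm{A^{2,*}_{s,t}\psi}{L^1}\lesssim\omega_A(s,t)^{2/p}\norm{\psi}{W^{2,1}}$; and \cref{BestControl}, applied to $w^{\natural}$ with exponent $p/3$, gives $\norm{w^{\natural}_{s,t}}{W^{-3,1}}\le\omega_{\natural}(s,t)^{3/p}$ whenever $\coL(s,t)\le L$.

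Fix $(s,t)$ as in the statement with $\overline{L}\le\min\{1,L\}$, let $[u,v]\subseteq[s,t]$ (so $\coL(u,v)\le L$ and $\omega_A(u,v),\omega_{\natural}(u,v)\le\overline{L}$ by monotonicity of controls), and set $P_{u,v}\coloneqq w_{u,v}-\mu_{u,v}-A^1_{u,v}w_u$, which by \cref{eq:rougheuler} and \cref{def:roughsolution} is a well-defined functional on $W^{1,1}$ agreeing with $A^2_{u,v}w_u+w^{\natural}_{u,v}$ on $W^{3,1}$. Letting $(J^{\eta})$ be the family of \cref{smoothing1} and writing $w_{u,v}(\psi)=\mu_{u,v}(\psi)+A^1_{u,v}w_u(\psi)+P_{u,v}(J^{\eta}\psi)+P_{u,v}((I-J^{\eta})\psi)$, I would bound $|\mu_{u,v}(\psi)|\lesssim|v-u|\norm{w}{L^{\infty}_{t,x}}^2$ and $|A^1_{u,v}w_u(\psi)|\lesssim\norm{w}{L^{\infty}_{t,x}}\omega_A(u,v)^{1/p}$ directly; estimate $P_{u,v}(J^{\eta}\psi)$ through the expression $A^2_{u,v}w_u+w^{\natural}_{u,v}$ together with $\norm{J^{\eta}\psi}{W^{2,1}}\lesssim\eta^{-1}$, $\norm{J^{\eta}\psi}{W^{3,1}}\lesssim\eta^{-2}$; and estimate $P_{u,v}((I-J^{\eta})\psi)$ through the expression $w_{u,v}-\mu_{u,v}-A^1_{u,v}w_u$ together with $\norm{w_{u,v}}{L^{\infty}}\le 2\norm{w}{L^{\infty}_{t,x}}$, $\norm{(I-J^{\eta})\psi}{L^1}\lesssim\eta$ and $\norm{\nabla(I-J^{\eta})\psi}{L^1}\lesssim 1$. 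This gives
\[
\norm{w_{u,v}}{W^{-1,1}}\lesssim|v-u|\norm{w}{L^{\infty}_{t,x}}^2+\norm{w}{L^{\infty}_{t,x}}\bigl(\omega_A(u,v)^{1/p}+\eta+\omega_A(u,v)^{2/p}\eta^{-1}\bigr)+\omega_{\natural}(u,v)^{3/p}\eta^{-2}.
\]

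Now I would choose $\eta\coloneqq\min\bigl\{1,\bigl(\omega_A(u,v)+\omega_{\natural}(u,v)+|v-u|^{p}\bigr)^{1/p}\bigr\}$ (if the argument of the minimum is zero then $u=v$ and there is nothing to prove). When $\eta<1$ one has $\eta^{-1}\le\omega_A(u,v)^{-1/p}$ and $\eta^{-2}\le\omega_{\natural}(u,v)^{-2/p}$, while when $\eta=1$ one uses $\omega_A(u,v),\omega_{\natural}(u,v)\le\overline{L}\le 1$; in either case each of the three $\eta$-dependent summands is at most a constant (depending only on $p$ and the fixed data) times $\norm{w}{L^{\infty}_{t,x}}\bigl(\omega_A(u,v)^{1/p}+\omega_{\natural}(u,v)^{1/p}+|v-u|\bigr)+\omega_{\natural}(u,v)^{1/p}$, so that
\[
\norm{w_{u,v}}{W^{-1,1}}\lesssim_p\bigl(1+\norm{w}{L^{\infty}_{t,x}}\bigr)^{2}\bigl(|v-u|+\omega_A(u,v)^{1/p}+\omega_{\natural}(u,v)^{1/p}\bigr).
\]
Raising to the power $p$ and summing over any partition of $[s,t]$ --- which is automatically admissible for the localization $(\coL,L)$ because $\coL(s,t)\le L$ --- then using $\sum|v-u|^{p}\le|t-s|^{p}$ and the super-additivity of $\omega_A,\omega_{\natural}$, yields \cref{eq:apriorisol}. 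Covering $[0,T]$ by finitely many intervals on which $\coL,\omega_A,\omega_{\natural}$ lie below the required thresholds, the same estimate shows that $w$ has finite localized $p$-variation into $W^{-1,1}$, whence $w\in C^{p\var}_TW^{-1,1}$ by \cref{EstimateControl}.

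The main obstacle is the second paragraph: since $w^{\natural}$ and $A^2 w$ are genuinely only elements of $W^{-3,1}$ and $W^{-2,1}$, a direct $W^{-1,1}$ estimate is impossible, and smoothing them down to $L^1$ costs a factor $\eta^{-2}$, resp.\ $\eta^{-1}$. The delicate point is to absorb these negative powers so that, after the choice of $\eta$, every surviving term carries $|v-u|^{p}$ or one of the super-additive controls $\omega_A,\omega_{\natural}$ raised to a power $\ge 1$ --- crucially, this must be done without feeding in the a priori bound of \cref{lemma:apriori}, which would reintroduce negative powers of $\omega_A$ into the estimate.
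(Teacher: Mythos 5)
Your proposal is correct and follows essentially the same route as the paper's proof: split the test function with the mollifier smoothing operators of \cref{smoothing1}, bound the smooth part through the equation (drift, $A^1$, $A^2$ and $w^\natural$ at costs $1$, $\eta^{-1}$, $\eta^{-2}$) and the rough part through the $L^\infty$ bound on $w$, then choose $\eta \sim (\omega_A+\omega_\natural)^{1/p}$ and pass to the $p$-variation by super-additivity. The only deviations --- applying the smoothing split just to $A^2_{u,v}w_u+w^{\natural}_{u,v}$ instead of to all of $w_{u,v}$, and taking $\eta$ as a minimum with $1$ including $|v-u|^p$ rather than shrinking $\overline{L}$ --- are cosmetic.
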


\begin{proof}
Consider again a family of smoothing operators as in \cref{smoothing1}. Given any $\psi\in W^{1,\infty}$ analyzing separately $J^\eta\psi$ and $(I-J^\eta)\psi$ we get 
\[
|w_{s,t}((I-J^\eta)\psi)| 
\leq 
\norm{w_{s,t}}{L^\infty_{x}} \norm{(I-J^\eta)\psi}{L^1} 
\leq 
2 \eta \norm{w}{L^\infty_{t,x}} \norm{\psi}{W^{1,1}}
\] 
and, exploiting the formulation \eqref{eq:rougheuler},
\begin{align*}
|w_{s,t}(J^\eta\psi)| 
\leq{} &
\norm{w}{L^\infty_{t,x}}^2 \norm{J^\eta\psi}{W^{1,1}} + \norm{w}{L^\infty_{t,x}} \omega_A(s,t)^\frac{1}{p} \norm{J^\eta\psi}{W^{1,1}} \\
& + \norm{w}{L^\infty_{t,x}}\omega_A(s,t)^\frac{2}{p} \norm{J^\eta\psi}{W^{2,1}} + \omega_\natural(s,t)^\frac{3}{p} \norm{J^\eta\psi}{W^{3,1}} \\
\leq{} & 
\left( 1 + \norm{w}{L^\infty_{t,x}} \right)^2 \left( |t-s| + \omega_A(s,t)^{\frac{1}{p}} + \eta^{-1} \omega_A(s,t)^{\frac{2}{p}} + \eta^{-2} \omega_\natural(s,t)^{\frac{3}{p}} \right) .
\end{align*}
This, together with the choice $\eta = \omega_A(s,t)^{1/p} + \omega_\natural(s,t)^{1/p}$, yields the thesis whenever $\omega_A(s,t)$, $\omega_\natural(s,t)$ are less or equal of some $\bar{L}$ such that $\eta \leq 1$.
\end{proof}


\subsection{Existence}
We discuss here how to prove our existence result.
The strategy outline is classic: first, we prove the existence of solutions with an additional viscous dissipative term. 
Then, we prove the convergence of such solutions while letting the viscosity vanish, recovering a solution of the original problem, in the sense of Definition \ref{def:roughsolution}.

The proof of the following lemma can be easily obtained by adapting the results in \cite{BA94,Y63}.

\begin{lemma}
    Let $w_0\in C^{\infty}(\T^2)$, $\{\sigma_j\}_{j=1}^M\subset C^{\infty}(\T^2,\R^2)$ be a family of divergence-free vector fields and $Z\in C^{\infty}([0,T],\R^M)$. Then, there exists a unique solution $w \in C^{\infty}([0,T]\times \T^2)$ to the following equation
    
    \begin{equation}\label{eq:origNS}
    \begin{cases}
    \pt w + u \sg w -\nu \Delta w= \sum_{j=1}^M \sigma_j \sg w \dot{Z}^j , \\
    w(0) = w_0 , \quad
    u = K_{BS}*w.
     
    \end{cases}
    \end{equation}
    In particular, we have the estimate
    \begin{equation}\label{ineq:BoundInf}
        \norm{w}{L^{\infty}_{t,x}}\le C\norm{w_0}{L^{\infty}_x},
    \end{equation}
    where $C$ is a positive constant that does not depend on the viscosity $\nu$.
\end{lemma}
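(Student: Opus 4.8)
The plan is to treat \cref{eq:origNS} as a semilinear parabolic Cauchy problem with a divergence-free drift and to globalize by classical energy and maximum-principle estimates, the viscous smoothing making matters strictly easier than in the inviscid Yudovich theory. First I would rewrite the equation as $\pt w - \nu\lap w + b \sg w = 0$ with effective drift $b(t,\cdot) \coloneqq K_{BS}*w(t,\cdot) - \sum_{j} \sigma_j \dot Z^j_t$, which is divergence-free since both $u=K_{BS}*w$ and the $\sigma_j$ are. For local well-posedness I would run a Banach fixed point for the mild formulation $w_t = \e^{\nu t\lap}w_0 - \int_0^t \e^{\nu(t-r)\lap}(b_r \sg w_r)\,dr$ in, say, $C([0,T_0];C^{1,\gamma}(\T^2))$ with $T_0$ depending on $\nu$ and $\norm{w_0}{C^{1,\gamma}}$: the map $w\mapsto u=K_{BS}*w$ is bounded $C^{1,\gamma}\to C^{2,\gamma}$ by elliptic Schauder estimates (the Biot--Savart kernel gains one derivative), while the heat semigroup compensates the derivative loss in $b\sg w$. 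Parabolic smoothing then upgrades the fixed point to a function that is $C^\infty$ in space-time for $t>0$, and smoothness of $w_0$, the $\sigma_j$ and $Z$ (so that $\dot Z$ is a bounded smooth function of $t$) gives smoothness up to $t=0$ as well.

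The heart of the matter is the global a priori bound and the $\nu$-uniformity of the $L^\infty$ estimate. Since $\div b=0$, testing the equation against $|w|^{p-2}w$ and integrating by parts kills the drift term and leaves $\frac{d}{dt}\norm{w_t}{L^p}^p = -\nu p(p-1)\int_{\T^2}|w_t|^{p-2}|\grad w_t|^2\,dx \le 0$, so $\norm{w_t}{L^p}\le\norm{w_0}{L^p}$ for every $p\in[1,\infty)$; letting $p\to\infty$ yields $\norm{w}{L^\infty_{t,x}}\le\norm{w_0}{L^\infty_x}$, i.e.\ the claimed estimate with $C=1$, manifestly independent of $\nu$.

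From this uniform $L^\infty$ bound, Calder\'on--Zygmund and Sobolev estimates on $K_{BS}$ give $u\in L^\infty_tW^{1,q}_x$ for all $q<\infty$, hence $u$ is bounded in $L^\infty_tC^{0,\gamma}_x$ (indeed log-Lipschitz in space) for every $\gamma<1$, uniformly in $\nu$; adding the smooth term $\sum_j\sigma_j\dot Z^j$, the drift $b$ is bounded in $L^\infty_tC^{0,\gamma}_x$. Parabolic Schauder estimates then propagate $w\in L^\infty_tC^{2,\gamma}_x$ on any interval of existence, so the usual continuation criterion rules out finite-time blow-up and the solution is global; a bootstrap ($w\in C^{k,\gamma}\Rightarrow u\in C^{k+1,\gamma}\Rightarrow w\in C^{k+2,\gamma}$ via Calder\'on--Zygmund and parabolic Schauder) then gives $w\in C^\infty([0,T]\times\T^2)$. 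For uniqueness, if $w^1,w^2$ solve the problem with the same data, then $\bar w\coloneqq w^1-w^2$ (which has zero spatial mean, as does each $w^i$) satisfies $\pt\bar w-\nu\lap\bar w+(u^1-\sum_j\sigma_j\dot Z^j)\sg\bar w+\bar u\sg w^2=0$ with $\bar u=K_{BS}*\bar w$; testing against $\bar w$ the divergence-free transport terms vanish and $\frac12\frac{d}{dt}\norm{\bar w_t}{L^2_x}^2 \le \norm{\grad w^2_t}{L^\infty_x}\norm{\bar u_t}{L^2_x}\norm{\bar w_t}{L^2_x}\lesssim \norm{\bar w_t}{L^2_x}^2$, using $\norm{\bar u_t}{L^2}\lesssim\norm{\bar w_t}{L^2}$ for mean-zero $\bar w_t$ and the bound on $\grad w^2$; Gr\"onwall and $\bar w_0=0$ give $\bar w\equiv 0$.

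The main obstacle is the global-in-time a priori estimate together with the $\nu$-independence of the constant; once the maximum-principle $L^\infty$ bound and the attendant almost-Lipschitz control on $u$ are in hand, the remainder is standard parabolic regularity and energy theory, which is why the statement can be obtained by adapting \cite{BA94,Y63}.
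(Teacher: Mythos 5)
Your proposal is correct and follows essentially the route the paper intends: the paper gives no proof of this lemma, deferring to an adaptation of \cite{BA94,Y63}, and your argument (mild-formulation fixed point for the vorticity equation with the divergence-free drift $u-\sum_j\sigma_j\dot Z^j$, the $L^p$/maximum-principle bound giving the $\nu$-independent estimate with $C=1$, parabolic bootstrap for smoothness, and an $L^2$ energy--Gr\"onwall uniqueness argument) is precisely such an adaptation.
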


Thanks to the above, we can start with a smooth approximation of both solution and noise and let them converge.

\begin{theorem}\label{th:vanishvisc}
	Let $w_0 \in L^{\infty}(\T^2)$, $\{\sigma_j\}_{j=1}^M\subset C^3(\T^2,\R^2)$ be a family of divergence-free vector fields and $\mathbf{Z} \in \rpspace$. Then, there exists a solution to \cref{eq:origEuler} in the sense of \cref{def:roughsolution}.
\end{theorem}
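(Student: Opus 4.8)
The plan is to run the vanishing--viscosity scheme announced just above, approximating the datum, the vector fields and the driver simultaneously, and then to extract a limit by means of the a priori bounds of \cref{lemma:apriori,lemma:apriorisol}. First I would mollify: set $w_0^n\coloneqq\rho_{1/n}\ast w_0$ and $\sigma_j^n\coloneqq\rho_{1/n}\ast\sigma_j$ with $\rho$ as in \cref{smoothing1}, so that $w_0^n\in C^\infty(\T^2)$ with $\norm{w_0^n}{L^\infty}\le\norm{w_0}{L^\infty}$ and $w_0^n\to w_0$ in $L^2(\T^2)$, and $\sigma_j^n\in C^\infty(\T^2,\R^2)$ is divergence--free with $\sigma_j^n\to\sigma_j$ in $C^3$; and, since $\mathbf Z$ is geometric, I would pick smooth $Z^n$ with $\mathbf Z^n=(Z^n,\Z^n)\to\mathbf Z$, where $\Z^n\st=\int_s^tZ^n_{s,r}\otimes dZ^n_r$. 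Passing to a subsequence, convergence in $p$-variation provides a single control $\bar\omega_Z$ with $|Z^n\st|^p,|\Z^n\st|^{p/2}\le\bar\omega_Z(s,t)$ for all $n$ (cf.\ \cite{FV10}); together with $\sup_n\norm{\sigma^n}{C^3}<\infty$ this fixes, once and for all, a localization $(\coL,L)$ — e.g.\ $\coL(s,t)=1+|t-s|+\bar\omega_Z(s,t)$ and $L$ suitably small — such that the control $\omega_{A^n}$ of the unbounded rough driver $(A^{1,n},A^{2,n})$ built from $(\sigma^n,\mathbf Z^n)$ through \cref{eq:urd1} satisfies $\omega_{A^n}\le\coL$, uniformly in $n$.

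Next, for fixed $n$ I would let the viscosity vanish. The existence result for the viscous problem \cref{eq:origNS} gives, for each $\nu>0$, a smooth solution $w^{n,\nu}$ with $\norm{w^{n,\nu}}{L^\infty_{t,x}}\le\norm{w_0}{L^\infty}$ uniformly in $\nu$, by \cref{ineq:BoundInf}; testing the equation against $\psi\in W^{2,\infty}$ shows that $w^{n,\nu}$ is Lipschitz in time into $W^{-2,1}$ uniformly in $\nu$ (for $n$ fixed $\dot Z^n$ is bounded and $\nu\le1$), so a standard Aubin--Lions/Arzel\`a--Ascoli argument (using $L^2\hookrightarrow\hookrightarrow W^{-1,1}$ on $\T^2$) yields, along a subsequence, $w^{n,\nu}\to w^n$ in $C_tW^{-1,1}$ and weakly-$\ast$ in $L^\infty_{t,x}$. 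The smoothing of the Biot--Savart kernel makes $u^{n,\nu}=K_{BS}\ast w^{n,\nu}$ precompact in $C_tL^q$, so the transport term $u^{n,\nu}\sg w^{n,\nu}=\div(u^{n,\nu}w^{n,\nu})$ converges (strong times weak) and $\nu\lap w^{n,\nu}\to0$, whence $w^n$ solves \cref{eq:origEuler} weakly with smooth driver $Z^n$. Since $Z^n$ is smooth, iterating this weak equation as in the derivation of \cref{eq:rougheuler} shows $w^n$ satisfies \cref{eq:rougheuler} relative to $\mathbf Z^n$ with the remainder \cref{eq:remainder}; crudely bounding those iterated time integrals by $\norm{w^n}{L^\infty_{t,x}}$, $\norm{\sigma^n}{C^3}$ and $\norm{\dot Z^n}{\infty}$ gives $|w^{n,\natural}\st(\psi)|\lesssim_n|t-s|\,\norm{\psi}{W^{3,1}}$, so $w^{n,\natural}$ has finite $p/3$-variation and $w^n$ is a solution in the sense of \cref{def:roughsolution}.

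Now that each $w^n$ is a genuine solution (with data $(\sigma^n,\mathbf Z^n)$), I would apply \cref{lemma:apriori,lemma:apriorisol} with the common localization $(\coL,L)$ and $\omega_{A^n}\le\coL$; since their constants depend only on $p$, the mollifier and $\sup_n\norm{\sigma^n}{C^3}$, and $\norm{w^n}{L^\infty_{t,x}}\le\norm{w_0}{L^\infty}$, the bounds are uniform in $n$, giving $\{w^n\}$ bounded in $C^{p\var}_TW^{-1,1}\cap L^\infty_{t,x}$ and $\{\omega^{\mathrm{loc}}_{w^{n,\natural}}\}$ uniformly bounded on $\{(s,t):\coL(s,t)\le L\}$. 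An Arzel\`a--Ascoli extraction (uniform $p$-variation modulus in $W^{-1,1}$, uniform $L^\infty_tL^2$ bound, $L^2\hookrightarrow\hookrightarrow W^{-1-\epsilon,1}$) then produces a subsequence $w^n\to w$ in $C_tW^{-1-\epsilon,1}$ and weakly-$\ast$ in $L^\infty_{t,x}$, with $\norm{w}{L^\infty_{t,x}}\le\norm{w_0}{L^\infty}$, $w\in C_tW^{-1,1}$ (lower semicontinuity of the $p$-variation plus continuity into $W^{-1-\epsilon,1}$), and $u^n=K_{BS}\ast w^n\to u=K_{BS}\ast w$ in $C_tL^q$. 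For each $\psi\in W^{3,1}$ one then has $w^n\st(\psi)\to w\st(\psi)$; $\mu^n\st(\psi)=\int_s^t\scalar{u^n_r w^n_r,\grad\psi}\,dr\to\mu\st(\psi)$ by strong--weak convergence; and $w^n_s\big((A^{1,n,*}_{s,t}+A^{2,n,*}_{s,t})\psi\big)\to w_s\big((A^{1,*}_{s,t}+A^{2,*}_{s,t})\psi\big)$ since $A^{i,n}\to A^i$ in the pertinent operator norms while $w^n_s\to w_s$ in $W^{-3,1}$ uniformly in $s$. Hence $w^{n,\natural}\st(\psi)\to w^\natural\st(\psi)$ pointwise, with $w^\natural$ the remainder of $w$ given by \cref{def:roughsolution}, and the uniform bound of this step together with lower semicontinuity of the $p/3$-variation shows $w^\natural$ lies in the localized space required in \cref{def:roughsolution}; so $w$ is the sought solution.

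I expect the main obstacle to be the uniformity used in the last step: producing a single control and localization dominating all the data $(\sigma^n,\mathbf Z^n)$ — which is precisely where the convergence $\mathbf Z^n\to\mathbf Z$ in $p$-variation, not merely its boundedness, is needed — and making sure that the vanishing--viscosity limit $w^n$ is itself a solution in the sense of \cref{def:roughsolution}, so that \cref{lemma:apriori,lemma:apriorisol} genuinely apply with $n$-independent constants. Once these are settled, the final passage $n\to\infty$ is a routine compactness argument, its only delicate ingredient being the strong--weak convergence in the transport nonlinearity, guaranteed by the regularizing effect of $K_{BS}$.
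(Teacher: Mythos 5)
Your proposal is correct and essentially the same as the paper's proof: geometric approximation of the driver plus mollified data, vanishing viscosity with the $\nu$-independent $L^\infty$ bound \eqref{ineq:BoundInf}, the uniform a priori estimates of \cref{lemma:apriori,lemma:apriorisol} on a common localization, compactness in $C_tW^{-1,1}$ (\cref{lemma:compactnessArg}), and termwise passage to the limit including the remainder; the paper merely takes a single diagonal limit with $\nu=1/N$ tied to the driver approximation, applying the a priori lemmas directly to the viscous approximations, instead of your iterated limit. One small slip: an estimate $|w^{n,\natural}_{s,t}(\psi)|\lesssim_n|t-s|\,\norm{\psi}{W^{3,1}}$ does not by itself yield finite $\frac{p}{3}$-variation (since $\frac{p}{3}<1$, $|t-s|^{p/3}$ is not superadditive); you should instead record the equally crude bound $O_n(|t-s|^2)$, which the double and triple time integrals in \cref{eq:remainder} provide for a smooth driver and which suffices because $\frac{2p}{3}>1$.
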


\begin{proof}
    Since $\mathbf{Z}$ is a geometric rough path, there exists a sequence of smooth paths, $\{Z^N\}_{N\in\N}$, whose canonical lifts converge to $\mathbf{Z}$ in $\rpspace$.\\
    Let $w^N$ be the solution to \cref{eq:origNS} with initial datum  $w_0^N=\rho_{1/N}\ast w_0$, viscosity $\nu=N^{-1}$, $\sigma_j^N=\sigma_j\ast \rho_{1/N}$ and (smooth) driving signal $Z^N$.
    By iterating the equation, following the procedure already used in \cref{sec:formEq}, we can see that $w^N$ solves an equation of the form 
     \begin{equation}\label{eq:EulerApprox}
         w^N\st=\int_s^t \left[\frac{1}{N}\Delta w^N_r-u^N_r\sg w_r^N\right]dr+A^{N,1}\st w^N_s+A^{N,2}\st w^N_s+w^{N,\natural}\st,
     \end{equation}
     where 
     \[
     A^{N,1}\st w_s =\sigma^N_j \sg w_s Z^{N,j}\st , 
     \qquad 
     A^{N,1}\st w_s = (\sigma^N_j \sg) (\sigma^N_i \sg)w_s \Z^{N,i,j}\st
     \]
     and the remainder $w^{N,\natural}_{s,t}$ is properly defined via a suitable adaptation of \cref{eq:remainder}.
     
     Adapting the proofs of \cref{lemma:apriori} and \cref{lemma:apriorisol}, one can easily obtain that, uniformly in $N\in \N$,
     \begin{equation}\label{stimaRemUnif}
         \co_{N,\natural}(s,t)\lesssim_{p,w_0}\co_A(s,t)+|t-s|^{\frac{p}{3}}\big(\co_A(s,t)^{\frac{1}{3}}+\co_A(s,t)^{\frac{2}{3}}\big)
     \end{equation}
     and 
     \begin{equation*}
        |w^N\st|_{W^{-1,1}}^p
        \lesssim_{p,w_0}
        |t-s|^p + 
        \co_A(s,t) + \co_\natural(s,t) 
     \end{equation*}
     for all $(s,t)\in\Delta_T$ s.t. $\co_A(s,t)\le L$.

    At this point, \cref{lemma:compactnessArg} allows to select a subsequence (still denoted by $w^N$ for the sake of notation) converging to some $w\in C([0,T],W^{-1,1})$. 
    Testing \cref{eq:EulerApprox} against any $\phi \in W^{3,1}$, and passing to the limit $w^N \to w$ and $\mathbf{Z}^N\to\mathbf{Z}$, we can see that such $w$ is in fact a solution of \cref{eq:origEuler}.
    In particular, the convergence of the nonlinear transport term is ensured by the fact that $w^N \to w$ in $C([0,T],W^{-1,2})$ implies $u^N \to u$ in $C([0,T],L^2)$. 
    
    The proof is then complete once we notice that the remainder, defined by $w^{\natural}\st:=\lim_{N \to \infty}w^{N,\natural}\st$, has finite $\frac{p}{3}$-variation with respect to a suitable localization thanks to the uniform bound \cref{stimaRemUnif}.
\end{proof}


\section{Euler Lagrangian flows}\label{sec:ELflow}
This section is focused on the study of two Lagrangian equations associated with the flow of solutions to \cref{eq:origEuler}.
We are first going to prove that 
\[
\phi_t(x)=x+\int_0^tu(s,\phi_s(x))dr+\int_0^t\sigma_j(\phi_r(x))dZ^j_r,
\]
admits a solution, whenever $u$ is a spatial log-Lipschitz drift like the one obtained by applying the Biot-Savart kernel to a bounded function. Afterward, we are going to show a uniqueness result for the non-local RDE 
\[
\phi_t(x)=x+\int_0^t\int_{\T^2}K_{BS}(\phi_r(x)-\phi_r(y))w_0(y)dydr+\int_0^t\sigma_j(\phi_r(x))dZ^j_r,
\]
where $K_{BS}$ is the Biot-Savart kernel and $w_0$ is a function bounded in space and constant in time.


\subsection{Log-Lipschitz RDE}
Consider a bounded, measurable function $u:[0,T]\times \T^2\to \R^2$ that is log-Lipschitz in the spatial component, namely
\begin{equation*}
|u(t,x)-u(t,y)|\le L_u \gamma(|x-y|)
\qquad 
\forall t\in[0,T], x,y\in\T^2 ,
\end{equation*}
where we set $L_u>0$ and  $\gamma(r)=r(1-\log r)\uno_{(0,1/\e)}(r)+(r+1/e)\uno_{[1/e,\infty)}(r)$ for all $r\geq 0$.
We prove that the RDE
\begin{equation}\label{eq:eulerflowI}
\phi_t(x)=x+\int_0^tu(r,\phi_r(x))dr+\int_0^t\sigma_j(\phi_r(x))dZ^j_r,
\end{equation}
is well-posed by a limiting procedure based on the following a priori estimates.
Notice also that, in this section, we denote by $\ovsi$ the functional $\ovsi:C(\T^2,\R^2)\to L(\R^M,C(\T^2,\R^2))$ defined by $\ovsi(f)(z)\coloneqq\sum_{j=1}^M \sigma_j(f(x))z_j$, up to extending the $\sigma_j$ by periodicity. We will denote its Fr\'echet derivative by $D\ovsi$.

\begin{proposition}\label{AprioriEstimate}
	Let $u:[0,T]\times \T^2\to \R^2$ be bounded measurable, $\{\sigma_j\}_{j=1}^M$ be $C^3(\T^2,\R^2)$-vector fields, $\mathbf{Z}$ be a $p$-rough path and $y\in C(\T^2,\R^2)$. 
	Given a solution $(Y,\ovsi(Y))\in \mathscr{D}^{p\var}_{Z}(C(\T^2,\R^2))$ to 
	\begin{equation*}
	Y_t=y+\int_0^tu(r,Y_r)dr+\int_0^t\ovsi(Y_r)d\mathbf{Z}_r,
	\end{equation*}
	it holds that for $q\in(p,3)$, 
	\begin{align}
	\norm{Y}{q\var} & \le C_1 \\
	\norm{R^Y}{q\var,(\coL,\mu_0)} & \le C_2,
	\end{align}
	where $\coL(s,t)=\co_Z(s,t)+|t-s|^p$ and $C_1,C_2,\mu_0$ are positive constants depending only on $\|\ovsi\|_{C^3}$, $\|u\|_{L^{\infty}_{t,x}}$, $q$ and $\co_Z(0,T)$.
\end{proposition}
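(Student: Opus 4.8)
The plan is to run the standard controlled-rough-path a priori estimate for RDEs, carefully tracking that the drift $u$ is only bounded (not Lipschitz), so that it contributes only through its $L^\infty$ norm, while the genuinely rough part is driven by $\ovsi(Y)$, which is $C^3$ in the state variable. First I would record the obvious bounds on the Gubinelli derivative: since $(Y,\ovsi(Y))$ is controlled with Gubinelli derivative $\ovsi(Y)$, one has $Y'_t=\ovsi(Y_t)$, hence $\norm{Y'}{\infty}\le\norm{\ovsi}{C^0}$ and, by the chain rule, $Y'_{s,t}=\ovsi(Y_t)-\ovsi(Y_s)$ is controlled with $\norm{Y'}{p\var}\lesssim\norm{D\ovsi}{\infty}\norm{Y}{p\var}$ and a remainder controlled by $\norm{D\ovsi}{C^1}$ times quadratic terms in $Y$. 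This reduces everything to estimating $\norm{Y}{q\var}$ and $\norm{R^Y}{q\var,(\coL,\mu_0)}$.

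Next I would localize: work on an interval $[s,t]$ with $\coL(s,t)=\co_Z(s,t)+|t-s|^p\le\mu_0$, and write, using the RDE,
\[
R^Y_{s,t}=Y_{s,t}-\ovsi(Y_s)Z_{s,t}=\int_s^t u(r,Y_r)\,dr+\Big(\int_s^t\ovsi(Y_r)\,d\mathbf{Z}_r-\ovsi(Y_s)Z_{s,t}\Big).
\]
The drift term is bounded by $\norm{u}{L^\infty_{t,x}}|t-s|$, which is controlled by $\coL(s,t)^{1/p}$ on the localized scale (since $|t-s|^p\le\coL(s,t)$). For the rough integral term I would invoke the rough-integral estimate \cref{RouIntEstimate} from \cref{ContRouInt}, which gives a bound of the form $\co_{R^Y}^{\mathrm{loc}}(s,t)^{2/p}\co_Z(s,t)^{1/p}+\co_{\ovsi(Y)'}^{\mathrm{loc}}(s,t)^{1/p}\co_Z(s,t)^{2/p}$, plus the explicit term $\ovsi(Y_s)'\,\Z_{s,t}$ whose size is $\lesssim\norm{D\ovsi}{\infty}\norm{\ovsi}{C^0}\co_Z(s,t)^{2/p}$. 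Collecting, on the localized scale one obtains an inequality of the schematic form
\[
\norm{R^Y}{p\var,(\coL,\mu_0),[s,t]}\le C\Big(\norm{u}{L^\infty}+\norm{\ovsi}{C^3}^2\Big)\coL(s,t)^{1/p}+C\norm{\ovsi}{C^3}\,\coL(s,t)^{1/p}\,\norm{R^Y}{p\var,(\coL,\mu_0),[s,t]},
\]
so choosing $\mu_0$ small enough (depending only on $\norm{\ovsi}{C^3}$) absorbs the last term, yielding the bound $\norm{R^Y}{p\var,(\coL,\mu_0)}\le C_2$. Feeding this back into $Y_{s,t}=\ovsi(Y_s)Z_{s,t}+R^Y_{s,t}$ gives $\norm{Y}{p\var,(\coL,\mu_0)}\le C_1$, and then \cref{EstimateControl}\,(1) (specifically \cref{CompareVar}) upgrades the localized $p$-variation to genuine $p$-variation on $[0,T]$, with the constant depending on $\coL(0,T)\le\co_Z(0,T)+T^p$; an interpolation between the $p$-variation bound and continuity, or directly \cref{EstimateControl}\,(2) applied after noting $\coL$ is strictly positive, converts this to the claimed $q$-variation bound for $q\in(p,3)$.

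The main obstacle is the self-referential structure: the estimate for $R^Y$ on $[s,t]$ contains $\norm{R^Y}{\cdot,[s,t]}$ on the right (and similarly $\co_{\ovsi(Y)'}$ depends on $\norm{Y}{p\var}$, hence on $\norm{R^Y}{}$), so one must set up the localization so that the feedback coefficient is a small power of $\coL(s,t)$ and can be absorbed uniformly — this is exactly why $\mu_0$ must be chosen depending on $\norm{\ovsi}{C^3}$ and why the localized-variation formalism of \cref{sec:preliminaries} is needed rather than a naïve Gr\"onwall argument. A secondary technical point is making the chain rule $Y'=\ovsi(Y)$ rigorous with only $C^3$ regularity of the $\sigma_j$ and merely continuous (not differentiable) $Y$ in space: this is handled since $\ovsi:C(\T^2,\R^2)\to L(\R^M,C(\T^2,\R^2))$ is Fr\'echet $C^2$ with bounds controlled by $\norm{\ovsi}{C^3}$, so the composition estimates for controlled paths apply verbatim in the Banach space $C(\T^2,\R^2)$.
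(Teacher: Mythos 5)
Your decomposition of $R^Y$ (drift bounded by $\norm{u}{L^{\infty}_{t,x}}|t-s|$, rough integral handled via \cref{RouIntEstimate} plus the explicit second-order term, composition estimates for $\ovsi(Y)$) is exactly the paper's first step, but the way you close the self-referential estimate has a genuine gap. Absorbing the $\coLo_{R^Y}$ term is fine, since it carries a small factor $\co_Z(s,t)^{1/2}\le \mu^{1/2}$; however, the resulting bound for $\coLo_{R^Y}$ still contains $\coLo_Y$, and when you feed it back into $|Y_{s,t}|\le \norm{\ovsi}{C^0}|Z_{s,t}|+|R^Y_{s,t}|$ and sum over a localized partition you do not get a linear feedback with a small coefficient: squaring $\coLo_{R^Y}$ produces an inequality of the form $\coLo_Y(s,t)\le C\mu(1+\mu)+C\,\coLo_Y(s,t)^2$, where the coefficient of the quadratic term depends only on the data but is \emph{not} small in $\mu$. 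A quadratic self-bound cannot be ``absorbed'' by shrinking $\mu_0$; it only yields the dichotomy $\coLo_Y\le x_2^{\mu}$ or $\coLo_Y\ge x_1^{\mu}$, and one must rule out the large branch. The paper does this with a continuation-in-$\mu$ argument: by \cref{EstimateControl}\,(2) the localized $q$-variation of $Y$ tends to $0$ as $\mu\to 0$ --- this is precisely where the strict inequality $q>p$ and the strict positivity of $\coL(s,t)=\co_Z(s,t)+|t-s|^p$ are indispensable (for $q=p$ the statement fails, cf.\ $g(t)=\sqrt{t}$) --- so the small branch holds for tiny $\mu$; then the doubling inequality \cref{ControlloPerStimaApriori} excludes a jump from the small to the large branch as $\mu$ grows to $\mu_0$, after arranging $4x_2^{\mu_0}<x_1^{\mu_0}$.

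This also means the role you assign to $q\in(p,3)$ is misplaced: you run the whole estimate in $p$-variation and treat the passage to $q$-variation as a final conversion (interpolation or \cref{EstimateControl}), but a localized $p$-variation bound with constants depending only on $\norm{\ovsi}{C^3}$, $\norm{u}{L^{\infty}_{t,x}}$, $q$ and $\co_Z(0,T)$ is exactly what the naive absorption cannot deliver, because the a priori information ``$(Y,\ovsi(Y))\in\mathscr{D}^{p\var}_Z$'' only gives finiteness with solution-dependent constants and gives no smallness of the localized $p$-variation as $\mu\to0$. The estimate must be carried out in $q$-variation from the start so that \cref{EstimateControl}\,(2) can initialize the small branch of the dichotomy. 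Once this is repaired, the remaining ingredients you list --- absorption of $\coLo_{R^Y}$, and \cref{CompareVar} to pass from localized to global $q$-variation --- coincide with the paper's argument.
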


\begin{proof}
    Let us stress that in this proof we consider controls associated with the $q$- and $q/2$-variations with respect to the localization $(\coL,\mu)$ of several $1$- and $2$-index functions.

	If $(s,t)\in\Delta_T$ are such that $\coL(s,t)\le \mu$, then
	\begin{align*}
	|R^Y\st| 
	\le{} &
	\Big|\int_s^tu(r,Y_r)dr\Big| + \Big|\int_s^t\ovsi(Y_r)d\mathbf{Z}_r-\ovsi(Y_s)Z\st-D\ovsi(Y_s)\ovsi(Y_s)\Z\st\Big| \\
	& + |D\ovsi(Y_s)\ovsi(Y_s)\Z\st| \\
	\lesssim{} & 
	\norm{u}{L^{\infty}_{t,x}}|t-s| + \coLo_{R^{\ovsi(Y)}}(s,t)^{\frac{2}{q}}\co_Z(s,t)^{\frac{1}{q}} + \coLo_{\ovsi(Y)'}(s,t)^{\frac{1}{q}}\co_Z(s,t)^{\frac{2}{q}} \\
	&  + \norm{\ovsi}{C^1}^2 \co_Z(s,t)^{\frac{2}{q}} \\
	\lesssim{} & 
	\norm{u}{L^{\infty}_{t,x}}|t-s| + \norm{\ovsi}{C^2} \big(\coLo_Y(s,t)^{\frac{2}{q}}+\coLo_{R^Y}(s,t)^{\frac{2}{q}}\big) \co_Z(s,t)^{\frac{1}{q}} \\
	& + \norm{\ovsi}{C^2}^2\coLo_{Y}(s,t)^{\frac{1}{q}}\co_Z(s,t)^{\frac{2}{q}} + \norm{\ovsi}{C^1}^2 \co_Z(s,t)^{\frac{2}{q}} ,
	\end{align*}
	where the second inequality is due to the \cref{ContRouInt}, while the third one follows from standard facts (see \cite[Chapter 7]{FH20}) and the localization $(\overline{\omega},\mu)$.\\
	At this point we can apply \cref{BestControl} and, fixed $\mu$ small enough to have $\norm{\ovsi}{C^2}^{q/2} \co_Z(s,t)^{1/2} < 1/2$, absorb the control $\coLo_{R^Y}$ on the left-hand side: 
	\begin{align*}
	\coLo_{R^Y}(s,t)
	\lesssim_q{} & 
	2 \norm{u}{L^{\infty}_{t,x}}^{\frac{q}{2}} |t-s|^{\frac{q}{2}} + \coLo_Y(s,t) + \norm{\ovsi}{C^2}^{\frac{q}{2}} \coLo_Y(s,t)^{\frac{1}{2}} \co_Z(s,t)^{\frac{1}{2}} \\
	& + 2 \norm{\ovsi}{C^2}^q \co_Z(s,t) \\
	\lesssim_q{} & 
	\norm{u}{L^{\infty}_{t,x}}^{\frac{q}{2}} |t-s|^{\frac{q}{2}} + \coLo_Y(s,t) + \norm{\ovsi}{C^2}^q \co_Z(s,t) .
	\end{align*}
	Also, by definition of $(Y,\ovsi(Y))\in \mathscr{D}^{p\var}_{Z}$, 
	\begin{align*}
	|Y\st|
	\le{} &
	|\ovsi(Y_s)Z\st+R^Y\st| \\
	\le{} & 
	\norm{\ovsi}{C^0} \co_Z(s,t)^{\frac{1}{q}} + \coLo_{R^Y}(s,t)^{\frac{2}{q}} \\
	\lesssim_q &
	\norm{\ovsi}{C^0} \co_Z(s,t)^{\frac{1}{q}} + \norm{u}{L^{\infty}_{t,x}} |t-s| + \coLo_Y(s,t)^{\frac{2}{q}}
	+ \norm{\ovsi}{C^2}^2 \co_Z(s,t)^{\frac{2}{q}} ,
	\end{align*}
	for all $(s,t)\in \Delta_T$ such that $\coL(s,t)\le \mu$.
	Therefore, \cref{BestControl} implies that there exists a positive constant $C$, depending on $q$, $\norm{\ovsi}{C^2}$ and $\norm{u}{L^{\infty}_{t,x}}$, such that for all $(s,t)\in \Delta_T$
	\[
	\coLo_Y(s,t)\le C \mu (1 + \mu) + C \coLo_Y(s,t)^2 .
	\]
	Therefore, choosing $\mu_0 \leq \mu$ to have $4 C^2 \mu_0 (1+\mu_0) \leq 1$, given
	\[
	x_{1,2}^{\mu} 
	= 
	\frac{1}{2} \left( 1 \pm \sqrt{1 - 4 C^2 \mu (1+\mu)} \right) , 
	\qquad \mu \leq \mu_0,
	\]
	we find that 
	\[
	\coLo_Y(s,t)\le x_2^{\mu} 
	\quad \text{or} \quad 
	\coLo_Y(s,t)\geq x_1^{\mu}.
	\]
	
	We know by \cref{EstimateControl} that, for $\mu$ very small, the former inequality holds (indeed, the control goes to $0$ as $\mu$ does).
	At the cost of possibly re-choosing it, for a small enough $\mu_0>0$ we have $x_1^{\mu_0} > \frac{1}{2}$ and $x_2^{\mu_0}\le \frac{1}{8}$. To show that $\norm{Y}{q,[s,t],(\coL,\mu)}^q \le x_2^{\mu_0}$ for all $\mu \le \mu_0$ it is then sufficient to exclude any jump of size greater than 4. 
	To this aim, \cref{ControlloPerStimaApriori} yields that
	\begin{align*}
	& \norm{Y}{q,[s,t],(\coL,\mu)}^q\le 2^{q-1}\norm{Y}{q,[s,t],(\coL,\mu/2)}^q\le 2^{q-1}\lim_{\tilde{\mu} \to \mu^-}\norm{Y}{q,[s,t],(\coL,\tilde{\mu})}^q , 
	\end{align*}
	and since $ 4 x_2^{\mu_0} < x_1^{\mu_0}$ this implies that we can never go from $\coLo_Y(s,t)\le x_2^{\mu_0}$ to $\coLo_Y(s,t)\geq x_1^{\mu}$.
	
	At last, \cref{CompareVar} allows us to conclude the proof.
\end{proof}

We have proved that the localized variation of both $Y$ and its remainder are bounded from above whenever the localization we choose is small enough. Let us now turn to the proof that a solution to \cref{eq:eulerflowI} is continuous with respect to its initial datum, driver, drift and diffusion.

\begin{proposition}\label{ConvergenceEstimate}
	Let $\mathbf{Z}^1,\mathbf{Z}^2$ be two $p$-rough paths, $u_1,u_2:[0,T]\times \T^2 \to \R^2$ bounded, measurable and log-Lipschitz in the spatial component, $\ovsi_1$, $\ovsi_2$ be two family of $M$ divergence-free vector fields in $C^3(\T^2,\R^2)$ and $y_1,y_2 \in C(\T^2,\R^2)$. 
	Moreover, assume that $(Y^i_t,\ovsi_i(Y^i_t))\in \mathscr{D}^{p\var}_{Z^i}(C(\T^2,\R^2))$ are controlled rough paths such that
	\begin{equation*}
	Y^i_t=y_i+\int_0^tu_i(r,Y_r)dr+\int_0^t\ovsi_i(Y_r)dZ^i_r \quad \forall t \in [0,T], i=1,2 .
	\end{equation*}
	Then, there exists a positive constant $C$ depending on $\norm{\ovsi_i}{C^3}$, $\norm{u_i}{L^{\infty}_{t,x}}$, $\co_{Z^i}(0,T)$ and the log-Lipschitz constants $L_{u_i}$ such that
	\begin{multline}\label{MainEstimateLagrangian}
	\sup_{t\in[0,T]} \norm{Y^1-Y^2}{C(\T^2)} \\
	\le 
	C \Big( \norm{y_1-y_2}{C(\T^2)}  
    + \norm{\ovsi_1-\ovsi_2}{C^3}
	+ \co_{Z^1-Z^2}(0,T)^{\frac{1}{2q}} \\
	+ \norm{u_1-u_2}{L^{\infty}_{t,x}}T 
    + \int_0^T\gamma(\norm{Y^1_r-Y^2_r}{C(\T^2)})dr \Big) ,  
	\end{multline}
\end{proposition}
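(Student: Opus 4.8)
The strategy is to establish a localized variation estimate for the difference $V\coloneqq Y^1-Y^2$, together with the difference of the associated remainders $R^{Y^1}-R^{Y^2}$, in the same spirit as the proof of \cref{AprioriEstimate}, and then to patch it over finitely many subintervals. First one may reduce to the case $\co_{Z^1-Z^2}(0,T)\le1$: otherwise $\co_{Z^1-Z^2}(0,T)^{1/2q}\ge1$ while, by \cref{AprioriEstimate}, $\sup_{t}\norm{Y^i_t-y_i}{C(\T^2)}$ is bounded by a constant depending only on the admissible quantities (it is the supremum norm of a drift term plus a rough integral of controlled $q$-variation), so $\sup_t\norm{V_t}{C(\T^2)}\le\norm{y_1-y_2}{C(\T^2)}+C_{\mathrm{adm}}$ and \cref{MainEstimateLagrangian} holds trivially. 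Subtracting the two equations, the drift contribution is the benign one: splitting $u_1(r,Y^1_r)-u_2(r,Y^2_r)=(u_1(r,Y^1_r)-u_1(r,Y^2_r))+(u_1(r,Y^2_r)-u_2(r,Y^2_r))$ and using that $u_1$ is log-Lipschitz while $\gamma$ is nondecreasing gives
\[
\Big|\int_s^t\big(u_1(r,Y^1_r)-u_2(r,Y^2_r)\big)dr\Big|\le L_{u_1}\int_s^t\gamma\big(\norm{Y^1_r-Y^2_r}{C(\T^2)}\big)dr+\norm{u_1-u_2}{L^\infty_{t,x}}|t-s|,
\]
and since the right-hand side is an increment of a path, the same bound (up to a power that is later undone) controls the localized $q$- and $q/2$-variations of the drift part of $V$ and of $R^{Y^1}-R^{Y^2}$; this is the source of the $\int_0^T\gamma(\cdot)$ and $\norm{u_1-u_2}{L^\infty_{t,x}}T$ terms.

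The delicate contribution is the rough integral difference $\int_s^t\ovsi_1(Y^1_r)dZ^1_r-\int_s^t\ovsi_2(Y^2_r)dZ^2_r$. I would handle it by combining: the a priori bounds of \cref{AprioriEstimate} on $\norm{Y^i}{q\var}$ and on the localized variations of $R^{Y^i}$ and of $D\ovsi_i(Y^i)\ovsi_i(Y^i)$; the composition rules for controlled rough paths and their stability counterparts (\cite[Chapter 7]{FH20}), which present $\ovsi_i(Y^i)$, $D\ovsi_i(Y^i)\ovsi_i(Y^i)$ and their remainders as controlled rough paths whose difference norms are controlled by $\norm{\ovsi_1-\ovsi_2}{C^3}$, by $\sup_r\norm{V_r}{C(\T^2)}$, by the localized $q$-variation of $V$ and by the localized $q/2$-variation of $R^{Y^1}-R^{Y^2}$ (and by quantities already bounded via \cref{AprioriEstimate}); and \cref{ContRouInt}, i.e. the local bound \cref{RouIntEstimate} together with the global stability estimate, to turn all of this into a bound on the localized variations of $V$ and of $R^{Y^1}-R^{Y^2}$, the driver-difference contributions entering as positive powers of $\co_{Z^1-Z^2}(0,T)$ — each at least $\co_{Z^1-Z^2}(0,T)^{1/2q}$ since $\co_{Z^1-Z^2}(0,T)\le1$. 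One then fixes a control $\bar\omega\coloneqq\co_{Z^1}+\co_{Z^2}+|\cdot|^p$ and a small constant $\mu_0>0$, depending only on the admissible quantities, so that on any $[s,t]$ with $\bar\omega(s,t)\le\mu_0$ the localized $q$- and $q/2$-variations of $Y^i$ and $R^{Y^i}$ are small (by \cref{AprioriEstimate} and \cref{EstimateControl}), and hence every self-referential occurrence of $\coLo_V$, of $\coLo_{R^{Y^1}-R^{Y^2}}$, and of their mixed products, carries either a positive power of $\mu_0$ or a factor $\norm{\ovsi}{C^2}^{q/2}\co_Z(s,t)^{1/2}<1/2$, exactly as in \cref{AprioriEstimate}. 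Using \cref{BestControl}, these are absorbed into the left-hand side, decoupling first $R^{Y^1}-R^{Y^2}$ in terms of $V$ and then $V$ in terms of itself, leaving the one-step inequality
\[
\sup_{r\in[s,t]}\norm{V_r}{C(\T^2)}\le 2\norm{V_s}{C(\T^2)}+C\int_s^t\gamma\big(\norm{V_r}{C(\T^2)}\big)dr+C\mathcal E,\qquad \mathcal E\coloneqq\norm{\ovsi_1-\ovsi_2}{C^3}+\co_{Z^1-Z^2}(0,T)^{1/2q}+\norm{u_1-u_2}{L^\infty_{t,x}}|t-s|.
\]
I expect the main obstacle to be precisely this bookkeeping: tracking which terms carry the small factors so that the absorption closes despite the coupling of $V$ with $R^{Y^1}-R^{Y^2}$ and with the Gubinelli derivatives, while simultaneously threading the non-Lipschitz term $\int\gamma(\cdot)$ through.

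Finally, since $\bar\omega(0,T)$ is bounded by the admissible quantities, $[0,T]$ is covered by a number $N$ of subintervals $[s_k,s_{k+1}]$ with $\bar\omega(s_k,s_{k+1})\le\mu_0$, with $N$ bounded by the admissible quantities. Iterating the one-step inequality, using $\norm{V_{s_k}}{C(\T^2)}\le\sup_{r\in[s_{k-1},s_k]}\norm{V_r}{C(\T^2)}$, $\norm{V_{s_0}}{C(\T^2)}=\norm{y_1-y_2}{C(\T^2)}$, $\int_{s_k}^{s_{k+1}}\gamma\le\int_0^T\gamma$ and $|s_{k+1}-s_k|\le T$, produces at worst a combinatorial factor of order $2^{N}$, which is a constant and hence absorbed into $C$; this gives \cref{MainEstimateLagrangian}. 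The nonlinear term $\int_0^T\gamma(\cdot)$ is deliberately kept on the right-hand side: it will be removed afterwards by an Osgood-type argument, and it cannot be treated inside $p$-variation estimates because $\gamma$ fails to be Lipschitz at the origin.
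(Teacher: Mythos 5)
Your proposal is correct and follows essentially the same route as the paper: decompose $Y^1-Y^2$ through the Gubinelli derivative and remainder, estimate $R^1-R^2$ via \cref{ContRouInt} and the controlled-path composition bounds, absorb the self-referential terms through \cref{BestControl} on a localization small w.r.t.\ $\co_{Z^1}+\co_{Z^2}+|t-s|^p$, and close with a Gr\"onwall-type iteration producing the log-Lipschitz term. The only (harmless) deviations are that you handle the quadratic self-referential term by direct absorption using the a priori bound of \cref{AprioriEstimate} rather than the paper's root-dichotomy argument, and you patch the one-step estimate over finitely many subintervals by hand instead of invoking \cref{Rough Gronwall}, which is what that lemma does internally.
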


\begin{proof}
	We wish to estimate $\sup_{t\in[0,T]} \norm{Y^1-Y^2}{C(\T^2)}$ by means of \cref{Rough Gronwall}.
	Notice that what we really need to estimate is the difference $|R^1\st-R^2\st|$, since 
	\begin{equation}\label{IneqY1}
	\begin{aligned}
	|Y^1_{s,t}-Y^2_{s,t}|
	={} &
	|\ovsi_1(Y^1_s)Z^1\st-\ovsi_2(Y^2_s)Z^2\st+R^1\st-R^2\st| \\
	\le{} &
	\big( \norm{\ovsi_2}{C^1} I\st + \norm{\ovsi_1-\ovsi_2}{C^0} \big) |Z^2\st| + \norm{\ovsi_1}{C^0} |Z^1\st-Z^2\st| \\ 
    & + |R^1\st-R^2\st|,
	\end{aligned}
	\end{equation}
	where we called $I\st\coloneqq\sup_{r\in [s,t]}|Y^1_r-Y^2_r|$ to ease the notation and where $R^i$ is the Gubinelli remainder of $Y^i$, for $i=1,2$.
	
	In the following, we always consider controls associated with the $q$- and $q/2$-variations with respect to the localization $(\co_{Z^1}(s,t)+\co_{Z^2}(s,t)+|t-s|^p,\mu)$ of several $1$- and $2$-index functions. In particular, adding $|t-s|^p$ results in a strictly positive control. As in \cref{AprioriEstimate}, the parameter $\mu$ will be properly chosen.
	For the sake of readability, we will also denote the controls associated with the Gubinelli derivatives (respectively, remainders) of $\ovsi_1(Y^1)$ and $\ovsi_1(Y^1)-\ovsi_2(Y^2)$ by $\coLo_{D\ovsi_1}$ and $\coLo_{D(\ovsi_1-\ovsi_2)}$ (respectively, $\coLo_{R^{\ovsi_1}}$ and $\coLo_{R^{\ovsi_1}-R^{\ovsi_2}}$).
	
	For all $(s,t)\in \Delta_T$ such that $\co_{Z^1}(s,t)+\co_{Z^2}(s,t)+|t-s|^p\le \mu$,
	\begin{equation}\label{IneqR}
	\begin{aligned}
	|R^1\st-R^2\st|
	\le{} & 
	\Big|\int_s^tu_1(r,Y^1_r)-u
	_2(r,Y^2_r)dr\Big| \\
	& +\Big|\int_s^t\ovsi_1(Y^1_r)dZ^1_r-\ovsi_1(Y^1_s)Z^1\st-\int_s^t\ovsi_1(Y^2_r)dZ^2_r+\ovsi_2(Y^2_s)Z^2\st\Big|\\
	\lesssim{} & 
	\norm{u_1-u_2}{L^{\infty}_{t,x}}|t-s|+\min\{L_{u_1},L_{u_2}\}\int_s^t\gamma(|Y^1_r-Y^2_r|)dr\\
	& + \Big|\int_s^t\ovsi_1(Y^1_r)dZ^1_r-\ovsi_1(Y^1_s)Z^1\st-\int_s^t\ovsi_1(Y^2_r)dZ^2_r+\ovsi_2(Y^2_s)Z^2\st\Big| .
	\end{aligned}
	\end{equation}
	We are left with a rough integral term, that can be estimated by means of \cref{SewingLemma}. Indeed,
	\begin{equation}\label{IneqRoughTerm}
	\begin{aligned}
	\Big|\int_s^t\ovsi_1(Y^1_r) & dZ^1_r -\ovsi_1(Y^1_s)Z^1\st-\int_s^t\ovsi_1(Y^2_r)dZ^2_r+\ovsi_2(Y^2_s)Z^2\st\Big| \\
	& \lesssim_{\norm{\sigma_i}{C^3}} 
	\Big|D\ovsi_1(Y^1_s)\ovsi_1(Y^1_s)\textbf{Z}^1\st-D\ovsi_2(Y^2_s)\ovsi_2(Y^2_s)\textbf{Z}^2\st\Big| \\
	& \qquad + \coLo_{R^{\ovsi_1}}(s,t)^{\frac{2}{q}}\co_{Z^1-Z^2}(s,t)^{\frac{1}{q}}+\coLo_{R^{\ovsi_1}-R^{\ovsi_2}}(s,t)^{\frac{2}{q}}\co_{Z^1}(s,t)^{\frac{1}{q}} \\
	& \qquad + \co_{Z^1-Z^2}(s,t)^{\frac{2}{q}}\coLo_{D\ovsi_2}(s,t)^{\frac{1}{q}}+\coLo_{D(\ovsi_2-\ovsi_1)}(s,t)^{\frac{1}{q}}\co_{Z^1}(s,t)^{\frac{2}{q}} \\
	& \lesssim_{\norm{\sigma_i}{C^3}} 
	\co_{Z^1-Z^2}(s,t)^{\frac{2}{q}}+\co_{Z^1}(s,t)^{\frac{2}{q}}\big(I\st+|\ovsi_1-\ovsi_2|_{C^2}\big) \\
	& \qquad + \coLo_{R^{\ovsi_1}}(s,t)^{\frac{2}{q}}\co_{Z^1-Z^2}(s,t)^{\frac{1}{q}}+\coLo_{R^{\ovsi_1}-R^{\ovsi_2}}(s,t)^{\frac{2}{q}}\co_{Z^1}(s,t)^{\frac{1}{q}} \\
	& \qquad + \co_{Z^1-Z^2}(s,t)^{\frac{2}{q}}\coLo_{D\ovsi_2}(s,t)^{\frac{1}{q}}+\coLo_{D(\ovsi_2-\ovsi_1)}(s,t)^{\frac{1}{q}}\co_{Z^1}(s,t)^{\frac{2}{q}}.
	\end{aligned}
	\end{equation}
	
	We report below some estimates for those controls above that depend on $\ovsi_i$. Recall that multiplying a control by any power of $I_{s,t}$ yields a new control (see e.g. \cite[Chapter 1]{FV10}). 
	By combining \cref{BestControl} and a modification of \cite[Lemma 7.3]{FH20} in terms of $p$-variation, we have that
	\begin{equation}\label{IneqControls}
	\begin{aligned}
	\coLo_{D\ovsi_i}(s,t)
	\lesssim_{\ovsi_i,q}{} & 
	\coLo_{Y^i}(s,t) , \\
	\coLo_{D(\ovsi_2-\ovsi_1)}(s,t) 
	\lesssim_{\ovsi_i,q}{} & 
	\big[\coLo_{Y^1}(s,t) \norm{\ovsi_1-\ovsi_2}{C^2}^q+\coLo_{Y^1-Y^2}(s,t) \\
	& + I\st^q\big(\coLo_{Y^1}(s,t)+\coLo_{Y^2}(s,t)\big)\big] , \\
	\coLo_{R^{\ovsi_1}}(s,t)
	\lesssim_{\ovsi_i,q}{} & 
	\big(\coLo_{R^1}(s,t)+\coLo_{Y^1}(s,t)\big) , \\
	\coLo_{R^{\ovsi_1}-R^{\ovsi_2}}(s,t) 
	\lesssim_{\ovsi_i,q}{} & 
	\coLo_{R^1-R^2}(s,t) + \coLo_{R^1}(s,t) \big( \norm{\ovsi_1-\ovsi_2}{C^2}^{\frac{q}{2}} + I\st^{\frac{q}{2}} \big) \\
	& + \coLo_{Y^2}(s,t)\norm{\ovsi_1-\ovsi_2}{C^2}^{\frac{q}{2}}+\coLo_{Y^1-Y^2}(s,t) \\
	& + I\st^{\frac{q}{2}} \coLo_{Y^1}(s,t) .
	\end{aligned}    
	\end{equation}
	
	\cref{IneqR,IneqRoughTerm,IneqControls} above imply that there is a  positive constant $C$, depending on $\min\{L_{u_1},L_{u_2}\}$, $\max\{\norm{\ovsi_1}{C^3},\norm{\ovsi_2}{C^3}\}$ and $q$, such that
	\begin{align*}
	|R^1\st-R^2\st|
	\lesssim_C{} & 
	\|u_1-u_2\|_{L^{\infty}_{t,x}}|t-s| + \int_s^t\gamma(|Y^1_r-Y^2_r|)dr \\
	& + \co_{Z^1-Z^2}(s,t)^{\frac{2}{q}} + \co_{Z^1}(s,t)^{\frac{2}{q}} \big( I\st+\|\ovsi_1-\ovsi_2\|_{C^2} \big) \\
	& + \big( \coLo_{R^1}(s,t) + \coLo_{Y^1}(s,t) \big)^{\frac{2}{q}} \co_{Z^1-Z^2}(s,t)^{\frac{1}{q}} + \co_{Z^1}(s,t)^{\frac{1}{q}}\big[ \coLo_{R^1-R^2}(s,t) \\
	& + \coLo_{R^1}(s,t)\big(\|\ovsi_1-\ovsi_2\|_{C^2}^{\frac{q}{2}} +I\st^{\frac{q}{2}} \big) + \coLo_{Y^2}(s,t)\|\ovsi_1-\ovsi_2\|_{C^3}^{\frac{q}{2}} \\ 
	& + \coLo_{Y^1-Y^2}(s,t)+I\st^{\frac{q}{2}}\coLo_{Y^1}(s,t)\big]^{\frac{2}{q}} + \co_{Z^1-Z^2}(s,t)^{\frac{2}{q}} \coLo_{Y^2}(s,t)^{\frac{1}{q}} \\
	& +\co_{Z^1}(s,t)^{\frac{2}{q}}\big[\coLo_{Y^1}(s,t)\|\ovsi_1-\ovsi_2\|_{C^2}^q + \coLo_{Y^1-Y^2}(s,t) \\
	& + I\st^q\big(\coLo_{Y^1}(s,t) + \coLo_{Y^2}(s,t)\big) \big]^{\frac{1}{q}} ,
	\end{align*}
	for all $(s,t)\in \Delta_T$ such that $\co_{Z^1}(s,t)+\co_{Z^2}(s,t)+|t-s|\le \mu$. At this point one can fix $\mu$ small enough for the term $C \co_{Z^1}(s,t)^{\frac{1}{q}} \coLo_{R^1-R^2}(s,t)^{\frac{2}{q}}$ term to be smaller than $2^{-\frac{2}{q}}\coLo_{R^1-R^2}(s,t)^{\frac{2}{q}}$. \cref{BestControl} implies that in this case 
	\begin{equation}\label{Ineq2R}
	\begin{aligned}
	\coLo_{R^1-R^2}(s,t)
	\lesssim{} & 
	\left[ \norm{u_1-u_2}{L^{\infty}_{t,x}}|t-s|+\int_s^t\gamma(|Y^1_r-Y^2_r|)dr \right]^{\frac{q}{2}} \\
	& + \co_{Z^1-Z^2}(s,t) + \co_{Z^1}(s,t) \big( I\st + \norm{\ovsi_1-\ovsi_2}{C^1}\big)^{\frac{q}{2}} \\
	& + \big( \coLo_{R^1}(s,t) + \coLo_{Y^1}(s,t) \big) \co_{Z^1-Z^2}(s,t)^{\frac{1}{2}} \\
	& + \co_{Z^1}(s,t)^{\frac{1}{2}} \big[ \coLo_{R^1}(s,t) \big( \norm{\ovsi_1-\ovsi_2}{C^2}^{\frac{q}{2}} + I\st^{\frac{q}{2}} \big) + I\st^{\frac{q}{2}}\coLo_{Y^1}(s,t) \\
	& + \coLo_{Y^2}(s,t) \norm{\ovsi_1-\ovsi_2}{C^2}^{\frac{q}{2}} \big] + \coLo_{Y^2}(s,t)^{\frac{1}{2}} \co_{Z^1-Z^2}(s,t) \\
	& + \co_{Z^1}(s,t)\big[\coLo_{Y^1}(s,t)\|\ovsi_1-\ovsi_2\|_{C^2}^q \\
    & + \big(\coLo_{Y^1}(s,t) + \coLo_{Y^2}(s,t) \big) I\st^q \big]^{\frac{1}{2}} \\
	& + \co_{Z^1}(s,t)^{\frac{1}{2}} \coLo_{Y^1-Y^2}(s,t) + \co_{Z^1}(s,t) \coLo_{Y^1-Y^2}(s,t)^{\frac{1}{2}} .
	\end{aligned}
	\end{equation}
	Finally, we can combine \cref{Ineq2R,IneqY1,BestControl} to get 
	\begin{equation}\label{IneqY2}
	\begin{aligned}
	\coLo_{Y^1-Y^2}(s,t)
	\le{} & 
	\co_{Z^2}(s,t) \big( \norm{\ovsi_2}{C^1} I\st + \norm{\ovsi_1-\ovsi_2}{C^0} \big)^q \\
    & + \norm{\ovsi_1}{C^0}^q \co_{Z^1-Z^2}(s,t) + \coLo_{R^1-R^2}(s,t)^2 \\
	\le{} & 
	C\big[F(s,t)+\mu\hspace{2pt} \coLo_{Y^1-Y^2}(s,t)^2+\mu^2 \coLo_{Y^1-Y^2}(s,t)\big],
	\end{aligned}    
	\end{equation}
	where $F(s,t)$ contains all the terms apart from the ones involving $\coLo_{Y^1-Y^2}$. The latter inequality yields that
	\begin{equation*}
	\coLo_{Y^1-Y^2}(s,t)\le x_2^{\mu}(s,t) \quad or \quad \coLo_{Y^1-Y^2}(s,t)\geq x_1^{\mu}(s,t),
	\end{equation*}
	where 
	\[
	x_{1,2}^{\mu}(s,t) = \frac{1-C\mu^2\pm \sqrt{(1-C\mu^2)^2-4\mu C^2 F(s,t)}}{2\mu C} . 
	\]
	By \cref{AprioriEstimate}, $\norm{F}{L^{\infty}} \le M$. We can then choose $\mu$ sufficiently small to have
	\begin{equation*}
	x^{\mu}_1(s,t)\geq \frac{1-C\mu^2+\sqrt{(1-C\mu^2)^2-4\mu C^2 M}}{2\mu C}\geq 3M.
	\end{equation*}
	Notice in particular that the choice of $\mu$ only depends on the parameters of the problem, i.e. $\norm{\ovsi_i}{C^3}$, $\norm{u_i}{L^{\infty}_{t,x}}$, $L_{u_i}$ and $\co_{Z^i}(0,T)$. Moreover, again \cref{AprioriEstimate} implies that 
	\begin{multline}\label{IneqY3}
	\coLo_{Y^1-Y^2}(s,t) \\
    \le 
    x_2^{\mu}(s,t)=\frac{1-C\mu^2- \sqrt{(1-C\mu^2)^2-4\mu C^2 F(s,t)}}{2\mu C} \\
    \le 
    2 F(s,t),
	\end{multline}
	where the last inequality is due, up to shrinking $\mu$ again, to 
	\[
	\sqrt{a}-b \leq \sqrt{a-b}
	\qquad 
	\forall a>\frac{1}{4}, b\in [0,a-\frac{1}{4}] .
	\]
	At last, \cref{IneqY3} implies that 
	\begin{equation}\label{IneqY4}
	\begin{aligned}
	|(Y^1-Y^2)\st| 
	\lesssim{} &
	F(s,t)^{\frac{1}{q}} \\
	\le{} & 
	\norm{u_1-u_2}{L^{\infty}_{t,x}}|t-s| + \int_s^t\gamma(|Y^1_r-Y^2_r|)dr + C \co_{Z^1-Z^2}(s,t)^{\frac{1}{q}}\\
	& + C \big(\co_{Z^1}(s,t) + \co_{Z^2}(s,t)\big)^{\frac{1}{q}} \big( I\st + \norm{\ovsi_1-\ovsi_2}{C^3} \big)
	\end{aligned}
	\end{equation}
	for all $(s,t)\in \Delta_T$ such that $\co_{Z^1}(s,t)+\co_{Z^2}(s,t)+|t-s|^p\le \mu$.
	
	The above satisfies the hypothesis of the Rough Gr\"onwall Lemma (\cref{Rough Gronwall}), which we can then apply to conclude the proof.
\end{proof}

We are finally ready to state a well-posedness result for \cref{eq:eulerflowI}. We also prove that the solution is measure-preserving in space for all times $t\in[0,T]$.

\begin{theorem}\label{ExistenceLagr}
	Let $\mathbf{Z}$ be a $p$-geometric rough path, $p \in [2,3)$, $u:[0,T]\times \T^2 \to \R^2$ be a bounded, measurable and log-Lipschitz in space, $\{\sigma_j\}_{j=1}^M$ be $C^3(\T^2)$-vector fields and $\phi_0 \in C(\T^2,\R^2)$.
	
	Then, for $q\in(p,3)$ there exist a localization $(\coL,L)$ and a controlled rough path $(\phi_t,\ovsi(\phi_t))\in \mathscr{D}^{q\var}_{Z,(\coL,L)}(C(\T^2,\R^2))$ solving \cref{eq:eulerflowI}, namely
	\[
	\phi_t=\phi_0+\int_0^tu(r,\phi_r)dr+\int_0^t\ovsi(\phi_r)d\mathbf{Z}_r.
	\]
	Such $(\phi_t,\ovsi(\phi_t))$ is unique: if there exists $(\varphi_t,\ovsi(\varphi_t))\in \mathscr{D}^{q\var}_{Z,(\coL',L')}(C(\T^2,\R^2))$  which also solves it, then $\varphi_t=\phi_t$ for all $t$.
	
	In addition, if $\phi_0=Id_{\T^2}$ is measure preserving and $u$, $\sigma_j$ are divergence-free, then $\phi_t$ is also measure preserving for all $t \in [0,T]$.
\end{theorem}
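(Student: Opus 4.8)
\emph{Strategy.} The plan is threefold: obtain existence by mollifying the drift and extracting a limit via the stability estimate \cref{ConvergenceEstimate} and the a priori bounds \cref{AprioriEstimate}; deduce uniqueness directly from \cref{ConvergenceEstimate}; and obtain the measure-preserving property by approximating the driver by smooth paths and invoking Liouville's theorem.

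\emph{Existence.} Put $u^n(t,\cdot)\coloneqq\rho_{1/n}\ast u(t,\cdot)$. Each $u^n$ is bounded by $\norm{u}{L^\infty_{t,x}}$, is log-Lipschitz in space with the \emph{same} constant $L_u$ uniformly in $n$ (convolution against a probability density does not enlarge a concave modulus of continuity), is Lipschitz in space for every fixed $n$, and satisfies $\norm{u^n-u}{L^\infty_{t,x}}\le L_u\gamma(1/n)\to 0$. Classical rough path theory for RDEs with a regular vector field and an integrable drift (e.g.\ \cite[Ch.~8]{FH20}) then produces, for each $n$, a controlled rough path $(Y^n,\ovsi(Y^n))\in\mathscr{D}^{p\var}_{Z}(C(\T^2,\R^2))$ solving \cref{eq:eulerflowI} with drift $u^n$; by \cref{AprioriEstimate} its variation norm and that of its remainder are bounded uniformly in $n$ on a localization $(\coL,\mu_0)$ with $\coL(s,t)=\co_Z(s,t)+|t-s|^p$ and $\mu_0$ depending only on $\norm{\ovsi}{C^3}$, $\norm{u}{L^\infty_{t,x}}$, $q$, $\co_Z(0,T)$. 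Now apply \cref{ConvergenceEstimate} with $\mathbf Z^1=\mathbf Z^2=\mathbf Z$, $\ovsi_1=\ovsi_2=\ovsi$, $y_1=y_2=\phi_0$, $u_1=u^n$, $u_2=u^m$, but on an arbitrary $[0,t_0]$ in place of $[0,T]$ (legitimate since $\co_Z$ is a control, so the constant does not worsen): with $m_{n,m}(t_0)\coloneqq\sup_{t\le t_0}\norm{Y^n_t-Y^m_t}{C(\T^2)}$ it reads $m_{n,m}(t_0)\lesssim\norm{u^n-u^m}{L^\infty_{t,x}}\,T+\int_0^{t_0}\gamma(m_{n,m}(r))\,dr$. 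Since $\gamma(r)=r(1-\log r)$ near $0$ and $\int_{0^+}\gamma(r)^{-1}\,dr=+\infty$, an Osgood (Bihari--LaSalle) argument upgrades this to $m_{n,m}(T)\to0$; hence $(Y^n)$ is Cauchy in $C([0,T],C(\T^2,\R^2))$ with a limit $\phi$. Lower semicontinuity of the localized variation under pointwise limits, together with the uniform bounds and continuity of $\ovsi$, shows $(\phi,\ovsi(\phi))\in\mathscr{D}^{q\var}_{Z,(\coL,\mu_0)}(C(\T^2,\R^2))$; passing to the limit in the equation — the drift by $\norm{u^n(\cdot,Y^n_\cdot)-u(\cdot,\phi_\cdot)}{L^\infty}\to0$, the rough integral by the continuity statement \cref{ContRouInt} — shows $\phi$ solves \cref{eq:eulerflowI}. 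Take $L=\mu_0$.

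\emph{Uniqueness and measure preservation.} If $(\varphi,\ovsi(\varphi))\in\mathscr{D}^{q\var}_{Z,(\coL',L')}(C(\T^2,\R^2))$ also solves \cref{eq:eulerflowI}, then \cref{ConvergenceEstimate} with all data coinciding gives $\sup_{t\le t_0}\norm{\phi_t-\varphi_t}{C(\T^2)}\lesssim\int_0^{t_0}\gamma(\norm{\phi_r-\varphi_r}{C(\T^2)})\,dr$ for every $t_0$, and the Osgood argument with vanishing additive term forces $\varphi\equiv\phi$. For the last claim, take $\phi_0=Id_{\T^2}$ and $u(t,\cdot),\sigma_j$ divergence-free. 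Let $\{Z^N\}_N$ be the smooth paths whose canonical lifts converge to $\mathbf Z$ in $\rpspace$ and put $u^N\coloneqq\rho_{1/N}\ast u$ (still divergence-free, as $\div$ commutes with convolution). For such smooth data \cref{eq:eulerflowI} is the ordinary flow equation of the time-dependent, $C^1$, divergence-free field $(t,x)\mapsto u^N(t,x)+\sum_j\sigma_j(x)\dot Z^{N,j}_t$, whose flow $\phi^N_t$ preserves Lebesgue measure by Liouville's theorem: $\int_{\T^2}f(\phi^N_t(x))\,dx=\int_{\T^2}f(x)\,dx$ for all $f\in C(\T^2)$. Since $\phi^N$ is trivially a controlled rough path with respect to $\mathbf Z^N$, \cref{ConvergenceEstimate} applied to $\phi^N$ (driver $Z^N$, drift $u^N$) against the solution $\phi$ — using $\co_{Z^N-Z}(0,T)\to0$, $\norm{u^N-u}{L^\infty_{t,x}}\to0$ and one more Osgood step — gives $\sup_t\norm{\phi^N_t-\phi_t}{C(\T^2)}\to0$, hence $f\circ\phi^N_t\to f\circ\phi_t$ uniformly; letting $N\to\infty$ in the identity above yields $\int_{\T^2}f(\phi_t(x))\,dx=\int_{\T^2}f(x)\,dx$.

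\emph{Main obstacle.} The delicate point is the existence step: one must pass to the limit $Y^n\to\phi$ \emph{simultaneously} in the nonlinear drift, in the controlled-rough-path decomposition, and in the rough integral, which hinges on the bounds of \cref{AprioriEstimate} being uniform in $n$ and on the continuity of rough integration. The repeated appeal to the Osgood inequality is routine but must be run on subintervals, since \cref{ConvergenceEstimate} as stated controls only the supremum at the final time.
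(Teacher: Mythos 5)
Your proposal is correct and follows essentially the same route as the paper: approximate the data, get uniform bounds from \cref{AprioriEstimate}, deduce a Cauchy property from \cref{ConvergenceEstimate} via an Osgood/comparison argument, pass to the limit using \cref{ContRouInt}, and obtain uniqueness and measure preservation from the same stability estimate. The one substantive difference is what you approximate: the paper mollifies the drift \emph{and} replaces $\mathbf{Z}$ by canonical lifts of smooth paths (using geometricity), so its approximating problems are classical ODE flows of divergence-free fields and measure preservation follows directly from \cref{eq:LinfCauchyFlow}, whereas you keep the rough driver and mollify only $u$, which is legitimate (RDEs in $C(\T^2,\R^2)$ with a spatially Lipschitz, time-measurable drift are standard) but forces the separate smooth-driver approximation you run for the last claim. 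Two points need slightly more care than you give them: to pass to the limit in $\int\ovsi(Y^n)\,d\mathbf{Z}$ via \cref{ContRouInt} you need \emph{quantitative} convergence of the Gubinelli derivatives and remainders in localized variation norms, not just lower semicontinuity and membership of the limit — the paper gets this from the uniform $p$-variation bounds plus uniform convergence by interpolation, together with \cref{Ineq2R}; and for uniqueness, \cref{ConvergenceEstimate} as stated is for global $p$-variation controlled paths, so one must invoke its (straightforward) extension to localized $q$-variation controlled paths, as the paper explicitly notes. Both are routine repairs given the estimates you already cite.
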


\begin{proof}
	Since $\mathbf{Z}$ is geometric, there is a sequence of functions in $C^{\infty}([0,T],\R^M)$ whose canonical lifts converge to $\mathbf{Z}$ in $\rpspace$. In particular, we choose a subsequence $\{Z^N\}_{N\in\N}$ such that, $\co_{Z^N-Z}(0,T)<\frac{1}{N}$ for $N\to\infty$. Without loss of generality, we can also assume that 
	\[
	|Z^N\st| \le \co_Z(s,t)^{\frac{1}{p}} 
	\text{ and } 
	|\Z^N\st|\le\co_Z(s,t)^{\frac{2}{p}}.
	\]
	In addition, let $\rho$ be a mollifier (in space) and define, up to properly extending $u$ to $\R^2$ by periodicity, $u^N=u * \rho_{\frac{1}{N}}$.
	
	Since $u^N$ is sufficiently smooth for all $N\in \N$, standard results (see e.g. \cite{FH20}) imply the existence of a unique $\phi^N \in C^{p\var}$ such that $(\phi^N,\ovsi(\phi^N)) \in \mathscr{D}^{p\var}_{Z^N}$ solves the RDE 
	\begin{equation*}
	\phi^N_t=\phi_0+\int_0^t u^N(r,\phi^N_r)dr+\int_0^t\ovsi(\phi^N_r)dZ^N_r.
	\end{equation*}
	\cref{ConvergenceEstimate} and a comparison argument imply that,
	\begin{equation}\label{eq:LinfCauchyFlow}
	|\phi_t^N-\phi_t^{\tilde{N}}|
	\lesssim 
	 z(t,\norm{u^N-u^{\tilde{N}}}{L^{\infty}_{t,x}}T+\omega_{Z^{N}-Z^{\tilde{N}}}(0,T)^{\frac{1}{2q}}),
	\end{equation}
	where $z(t,z_0)$ is the solution to $\dot{z_t}=\gamma(z_t)$ with $z_0$ as starting condition. 
	Notice that $z(t,z_0)\le ez_0^{exp(-t)}$ as long as $z_0$ is sufficiently small (cf. \cite{BFM16}).
	
	By combining \cref{AprioriEstimate}, \cref{eq:LinfCauchyFlow} and the interpolation inequality
	\[
	\norm{\phi^N-\phi^{\tilde{N}}}{q,[0,T]}
	\lesssim 
	\big( \norm{\phi^N-\phi^{\tilde{N}}}{p,[0,T]} \big)^{\frac{p}{q}} \big( \norm{\phi^N-\phi^{\tilde{N}}}{L^{\infty}} \big)^{1-\frac{p}{q}} ,
	\]
	we eventually see that $\{\phi^N\}$ and $\{\ovsi(\phi^N)\}$ are Cauchy sequences in $C^{q\var}$. 
	Moreover, we know that $\norm{u^N}{L^{\infty}_{t,x}} \lesssim \norm{u}{L^{\infty}_{t,x}}$ and that the sequence $u^N$ converges uniformly in $(t,x)$ to $u$ and is equi-log-Lipschitz, i.e.
	\begin{equation*}
	|u^N(t,x)-u^N(t,y)|
	\le 
	L_u \gamma(|x-y|) 
	\qquad 
	\forall N\in \N, t\in [0,T] \text{ and } x,y\in \R^2.
	\end{equation*}
	These properties, \cref{AprioriEstimate,ConvergenceEstimate} and \cref{Ineq2R} imply that there exists $\mu_0>0$ such that $\norm{R^N-R^{\tilde{N}}}{\frac{q}{2},(\coL,\mu_0),[0,T]}$ is a Cauchy sequence, with $R^N$ being the remainder of $\phi^N$ with respect to $Z^N$ and $\coL(s,t)=\omega_Z(s,t)+|t-s|^p$. 
	
	It follows that, given $\lim_{N\to \infty}(\phi^N, \ovsi(\phi^N)) \eqqcolon (\phi,\ovsi(\phi)) \in \mathscr{D}^{q\var}_{Z,(\coL,\mu_0)}(C_b(\T^2,\R^2))$, by continuity of rough integrals, i.e. \cref{ContRouInt}, $(\phi, \ovsi(\phi))$ solves 
	\begin{equation*}
	\phi_t=\phi_0+\int_0^t u(s,\phi_s)dx+\int_0^t\ovsi(\phi_s)dZ_s.
	\end{equation*}
	
	Uniqueness then comes from a trivial generalization of \cref{ConvergenceEstimate}, allowing it to hold for localized controlled rough paths as well.
	
	As for measure-preservingness of  $\phi_t$, is given by \cref{eq:LinfCauchyFlow}. 
\end{proof}

Once the well-posedness for the RDE with log-Lipschitz drift is obtained, we conclude this section by studying the inverse flow map.

\begin{theorem}\label{thm:h_t}
	Let $\mathbf{Z}$, $p$, $u$ and $\ovsi$ be as in the previous theorem. Given $\phi_0 = Id$, if $\phi: [0,T]\times \T^2 \to \T^2$ is the solution of   
	\[
	\phi_t^x 
	= 
	x + \int_0^t u(r,\phi_r^x) dr - \int_0^t \ovsi(\phi_r^x) d\mathbf{Z}_r ,
	\]
	the following hold:
	\begin{enumerate}
		\item $\phi_t$ is invertible, for all $t \in [0,T]$;
		\item given $\psi \in C^{\infty}(\T^2,\R)$ and $h_t\coloneqq\psi (\phi_t^{-1})$, there exist a localization $(\coL, L)$ and $h^{\natural} \in  C_{2,\coL,L}^{p/3\var}(W^{-3,1})$ such that, for all $\gamma \in W^{3,1}$,		
		\begin{equation}
		h\st(\gamma) 
		=
		\int_s^t h_r u_r \sg \gamma dr + h_s \left( \left[ A^{1,*}\st + A^{2,*}\st \right] \gamma \right) + h^{\natural}\st (\gamma),
		\end{equation}
		where $(A^1,A^2)$ is the unbounded rough driver given by \cref{eq:urd1}.
	\end{enumerate}
\end{theorem}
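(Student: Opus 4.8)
The plan is to mirror the vanishing‑viscosity / smooth‑approximation argument of \cref{ExistenceLagr} and \cref{th:vanishvisc}. For a smooth driver everything is classical: the flow of a smooth time‑dependent vector field is a diffeomorphism, and the pullback of a smooth function by its inverse solves the usual linear transport PDE. I would establish \textup{(1)} and the rough equation in \textup{(2)} along the approximation, with estimates uniform in the approximation parameter, and then pass to the limit.

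First I would take $Z^N$, $u^N = u\ast\rho_{1/N}$ and the (fixed) $\sigma_j$ exactly as in the proof of \cref{ExistenceLagr}, and let $\phi^N$ be the smooth solution of
\[
\phi^N_t = \mathrm{Id} + \int_0^t u^N(r,\phi^N_r)\,dr - \int_0^t \ovsi(\phi^N_r)\,dZ^N_r ,
\]
so that each $\phi^N_t$ is a diffeomorphism of $\T^2$ with smooth inverse $\psi^N_t\coloneqq(\phi^N_t)^{-1}$. For a smooth driver $\psi^N$ is again the flow of an RDE of the same type (driven by a time‑reversed $p$-rough path, with the same log‑Lipschitz drift and $C^3$ diffusion), so \cref{AprioriEstimate} and \cref{ConvergenceEstimate} apply to it as well: specialising \cref{ConvergenceEstimate} to identical drivers, drifts and vector fields but distinct initial points gives, through the Osgood/Bihari argument already used around \cref{eq:LinfCauchyFlow}, a modulus of continuity in $x$ for $\phi^N_t$ and $\psi^N_t$ uniform in $N$, while \cref{AprioriEstimate} gives uniform $C^{q\var}$-in-time bounds. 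By Arzel\`a--Ascoli, and since $\phi^N\to\phi$ by \cref{ExistenceLagr}, a subsequence of $\psi^N$ converges uniformly to some $g$; passing to the limit in $\phi^N_t\circ\psi^N_t=\mathrm{Id}=\psi^N_t\circ\phi^N_t$ (the equicontinuity is what makes the compositions converge) yields $\phi_t\circ g_t=g_t\circ\phi_t=\mathrm{Id}$. Hence $\phi_t$ is a bijection with continuous inverse $\phi_t^{-1}=g_t$, which is \textup{(1)}, and the whole sequence $\psi^N\to\phi^{-1}$ uniformly on $[0,T]\times\T^2$.

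For \textup{(2)} I set $h^N_t\coloneqq\psi\circ\psi^N_t=\psi\circ(\phi^N_t)^{-1}$. Since $\phi^N_t$ is measure preserving (\cref{ExistenceLagr}), $\norm{h^N_t}{L^\infty}=\norm{\psi}{L^\infty}$ for every $t$ and $N$, and $h^N$ solves the classical transport equation $\pt h^N + (u^N_t - \sigma_j \dot Z^{N,j}_t)\sg h^N = 0$ with $h^N_0=\psi$. Testing against $\gamma\in W^{3,1}$, integrating by parts (here one uses $\div u=\div\sigma_j=0$) and iterating the resulting integral against $Z^N$ exactly as in \cref{sec:formEq}, this can be rewritten as
\[
h^N\st(\gamma)=\int_s^t h^N_r\,u^N_r\sg\gamma\,dr + h^N_s\big(\big[A^{N,1,*}\st+A^{N,2,*}\st\big]\gamma\big) + h^{N,\natural}\st(\gamma),
\]
where $(A^{N,1},A^{N,2})$ is the unbounded rough driver built from the $\sigma_j$ and $\mathbf Z^N$ via \cref{eq:urd1} and $h^{N,\natural}$ is the explicit third‑order remainder, i.e. the analogue of \cref{eq:remainder} with $w,u$ replaced by $h^N,u^N$. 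This has exactly the structure of \cref{eq:rougheuler}, the new drift $\int_s^t h^N_r u^N_r\sg\gamma\,dr$ being bounded by $|t-s|\,\norm{h^N}{L^\infty_{t,x}}\norm{u^N}{L^\infty_{t,x}}\norm{\gamma}{W^{1,1}}$, so the a priori estimate of \cref{lemma:apriori} goes through \emph{mutatis mutandis}; since $\norm{h^N}{L^\infty_{t,x}}=\norm{\psi}{L^\infty}$ and $\norm{u^N}{L^\infty_{t,x}}\lesssim\norm{u}{L^\infty_{t,x}}$ are uniform in $N$, it produces a localization $(\coL,L)$ and a bound on $\omega_{h^{N,\natural}}$ of the form \cref{eq:aprioriremainder} that is uniform in $N$. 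Finally, since $\psi^N\to\phi^{-1}$ uniformly we get $h^N\to h\coloneqq\psi\circ\phi^{-1}$ uniformly, hence in every $W^{-n,1}$; since $\mathbf Z^N\to\mathbf Z$ in $\rpspace$, each term of the displayed identity converges against a fixed $\gamma\in W^{3,1}$, so $h^\natural\st\coloneqq\lim_N h^{N,\natural}\st$ satisfies the claimed equation and, the uniform $p/3$-variation bound passing to the limit, $h^\natural\in C_{2,\coL,L}^{p/3\var}(W^{-3,1})$.

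I expect the main obstacle to be the uniform control of the inverse flows $\psi^N=(\phi^N)^{-1}$: one needs spatial and temporal regularity, uniform in $N$, both to pass to the limit in the compositions $\phi^N\circ\psi^N$, $\psi^N\circ\phi^N$ and to make $h^{N,\natural}$ uniformly negligible. The genuinely delicate point is the spatial modulus of continuity, since $u$ is only log‑Lipschitz and no Lipschitz estimate is available; this is precisely what \cref{ConvergenceEstimate}, specialised to distinct initial data, provides via an Osgood/Bihari bound, so the difficulty is already encapsulated in the machinery of \cref{sec:ELflow}.
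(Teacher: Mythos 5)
Your treatment of part (2) is essentially the paper's proof: approximate by smooth $u^N$, $Z^N$, differentiate $\phi^N_t\circ\phi^{N,-1}_t=\mathrm{Id}$ to get the transport equation $\pt h^N+(u^N-\sigma_j\dot Z^{N,j})\sg h^N=0$, pass to the weak/integral formulation, iterate as in \cref{sec:formEq} to obtain the rough formulation with driver $(A^{N,1},A^{N,2})$, run the \cref{lemma:apriori}-type estimate (which only uses $\norm{h^N}{L^\infty_{t,x}}=\norm{\psi}{L^\infty}$ and the uniform drift bound) to get a uniform localized $p/3$-variation bound on $h^{N,\natural}$, and pass to the limit using the convergence $\phi^{N,-1}_t\to\phi^{-1}_t$. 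Where you diverge is part (1): the paper does not use compactness at all, but simply defines the inverse directly, for each fixed $t$, as the endpoint of the backward equation $Y_s=\mathrm{Id}-\int_0^s u(t-r,Y_r)dr-\int_0^s\ovsi(Y_r)d\overline{\mathbf Z}_r$ driven by the time-reversed rough path, which is well-posed by \cref{ExistenceLagr}; your Arzel\`a--Ascoli argument on $\psi^N=(\phi^N)^{-1}$ also works, and has the side benefit of producing the convergence $\psi^N\to\phi^{-1}$ you need in part (2) (the paper gets it from \cref{ConvergenceEstimate} applied to the backward equations). Two small imprecisions in your route are worth flagging: $t\mapsto\psi^N_t$ is \emph{not} itself a solution of an RDE of the same type (only for each fixed $t$ is $\psi^N_t$ the endpoint of a backward flow, and as a function of $t$ it satisfies the transport-type identity you use later), so the ``uniform $C^{q\var}$-in-time bounds from \cref{AprioriEstimate}'' do not literally apply to $\psi^N$ — but they are not needed, since for part (1) it suffices to work at fixed $t$ with the uniform spatial modulus of continuity (from \cref{ConvergenceEstimate} with shifted initial data plus Osgood) and, for part (2), pointwise-in-$t$ uniform convergence of $h^N_t$ already lets you pass to the limit termwise; also, $\norm{h^N_t}{L^\infty}=\norm{\psi}{L^\infty}$ needs only bijectivity of $\phi^N_t$, not measure preservation. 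With these cosmetic repairs your argument is complete and matches the paper's in all essentials.
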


\begin{proof}
	\begin{enumerate}
		\item Given $t \in [0,T]$, we can build the inverse of $\phi_t$ by solving the  equation backward in time. 
		Namely, $\phi_t^{-1} = Y_t$, where $(Y_s)_{s \in [0,t]}$ is the solution of
		\begin{equation*}
		Y_s = Id_{\T^2} - \int_0^s u(t-r,Y_r)dr - \int_0^s \ovsi(Y_r) d \overline{\mathbf{Z}}_r,
		\end{equation*}
		driven by the geometric rough path associated with $\overline{Z}_s=Z_{t-s}$ (we refer to \cite{FH20} for more details).
		\item Similarly to the previous theorem, consider two sequences $\{Z^N\}_{N \in \N}\subset C^{\infty}([0,T],\R^M)$, approximating $\mathbf{Z}$ in $\rpspace$, and $\{u_N\}_{N\in\N} \subset C^{\infty}([0,T]\times \T^2, \R^2)$, converging to $u$.
		
		For each $N$, the corresponding flows $\phi^N$ are invertible and sufficiently regular. 
		In particular since $\partial_t\big[ \phi_t^N(\phi_t^{N,-1}(x))\big]\equiv 0$, one has that
        \begin{equation*}
            \begin{aligned}
                \partial_t \big(\phi_t^{N,-1}(x)\big)&=-\big[(\grad^T\phi^N_t\big)^{-1}(\phi_t^{N,-1}(x))\big] \big[(\partial_t \phi^N_t)(\phi^{N,-1}_t(x))\big]\\
                &=-\big[(\grad^T\phi^N_t\big)^{-1}(\phi_t^{N,-1}(x))\big]\big[u_t^N(x)-\ovsi(x)\dot{Z}_t^N\big].
            \end{aligned}
        \end{equation*}
  Therefore,
		\begin{equation*}
		\partial_t h^N_t(x) 
		= -u^N_t(x)\sg h^N_t(x) + \ovsi(x)\sg h^N_t(x) \dot{Z}^N_t ,
		\end{equation*}
		with $h^N_t\coloneqq\psi( \phi^{N,-1}_t)$. 
		The result then follows as in \cref{th:vanishvisc}, namely we pass through the integral formulation and iterate the equation as we did in \cref{sec:formEq}. 
		Eventually, thanks to the convergence of $\phi_t^{N,-1}$ to $\phi_t^{-1}$ (by \cref{ConvergenceEstimate}), we can send $N$ to infinity in the equation solved by $h^N$ .
	\end{enumerate}
\end{proof}

\subsection{RDEs with non-local drift}
Let us consider the second Lagrangian RDE, which plays a crucial role in the proof of uniqueness for solutions of the Euler equation \eqref{eq:rougheuler}.
In particular, we prove a uniqueness result for the following RDE
\begin{equation}\label{eq:eulerFlowII}
\phi_t(x)=x+\int_0^t\int_{\T^2}K_{BS}(\phi_r(x)-\phi_r(y))w_0(y)dydr+\int_0^t\ovsi(\phi_r(x))d\mathbf{Z}_r,
\end{equation}
where $w_0$ is a bounded function.\\

\begin{theorem}\label{thm:uniqFlowII}
	Let $\mathbf{Z}$ be a $p$-geometric rough path, $p \in [2,3)$, $w_0\in L^{\infty}(\T^2)$,  $\{\sigma_j\}_{j=1}^M$ be $C^3(\T^2)$-vector fields and $\phi_0 \in C(\T^2,\R^2)$.
	
	If there exist two controls, $\coL_1$ and $\coL_2$, and two positive constants, $L_1$ and $L_2$, such that $(X,\ovsi(X))\in \mathscr{D}_{Z,(\coL_1,L_1)}^{p\var}(C(\T^2,\R^2))$ and $(Y,\ovsi(Y))\in \mathscr{D}_{Z,(\coL_2,L_2)}^{p\var}(C(\T^2,\R^2))$ are measure-preserving solutions to \cref{eq:eulerFlowII}, then
	\[
	X_t = Y_t 
	\qquad \forall t\in[0,T] .
	\]
\end{theorem}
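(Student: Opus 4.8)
The plan is to recast \cref{eq:eulerFlowII} as an instance of the log-Lipschitz RDE \cref{eq:eulerflowI} and then to exploit the stability estimate of \cref{ConvergenceEstimate} together with an Osgood-type argument. For a measure-preserving flow $X$, set
\[
u_X(r,z) \coloneqq \int_{\T^2} K_{BS}\big(z - X_r(y)\big)\, w_0(y)\, dy ,
\]
and analogously $u_Y$; then $X$ (resp.\ $Y$) solves \cref{eq:eulerflowI} with drift $u_X$ (resp.\ $u_Y$) and with the same $\ovsi$ and the same $\mathbf{Z}$. Since $X_r$ preserves the Lebesgue measure, $u_X(r,\cdot) = K_{BS}\ast \big((X_r)_{\#} w_0\big)$ and the pushforward $(X_r)_{\#} w_0$ has density bounded by $\norm{w_0}{L^\infty}$; by the classical potential-theoretic estimate for the Biot--Savart kernel, $u_X$ and $u_Y$ are therefore bounded and, uniformly in time, log-Lipschitz in space with constants depending only on $\norm{w_0}{L^\infty}$. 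In particular $X$ and $Y$ satisfy the hypotheses of \cref{AprioriEstimate,ConvergenceEstimate} (in the version valid for localized controlled rough paths, cf.\ the uniqueness part of \cref{ExistenceLagr}).

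The crucial ingredient is the quantitative comparison of the two drifts: for every $r\in[0,T]$,
\[
\norm{u_X(r,\cdot) - u_Y(r,\cdot)}{L^\infty(\T^2)} \lesssim_{\norm{w_0}{L^\infty}} \gamma\big(\norm{X_r-Y_r}{C(\T^2)}\big) .
\]
To prove this, one writes $u_X(r,z) - u_Y(r,z) = \int_{\T^2} \big[K_{BS}(z-X_r(y)) - K_{BS}(z-Y_r(y))\big] w_0(y)\, dy$ and splits the integration domain into $\{\,|z-X_r(y)|\le 2d\,\}$ and its complement, where $d\coloneqq\norm{X_r-Y_r}{C(\T^2)}$. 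On the first region one estimates each kernel separately via $|K_{BS}(\zeta)|\lesssim|\zeta|^{-1}$ (noting $|z-X_r(y)|\le 2d$ forces $|z-Y_r(y)|\le 3d$) and the measure-preservation of $X_r$, $Y_r$ to change variables, obtaining a contribution $\lesssim\norm{w_0}{L^\infty}\,d$; on the complement one uses the mean value theorem with $|\grad K_{BS}(\zeta)|\lesssim|\zeta|^{-2}$ and the same change of variables, integrating $|\zeta|^{-2}$ over $2d\le|\zeta|\lesssim 1$, which yields the logarithmic factor $\lesssim\norm{w_0}{L^\infty}\,d\log(1/d)$. Since $d+d\log(1/d)\lesssim\gamma(d)$ for small $d$, the claim follows.

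It then suffices to apply \cref{ConvergenceEstimate} with $\mathbf{Z}^1=\mathbf{Z}^2=\mathbf{Z}$, $y_1=y_2=\phi_0$, $\ovsi_1=\ovsi_2=\ovsi$ and $u_1=u_X$, $u_2=u_Y$: every term carrying differing data ($\norm{y_1-y_2}{C(\T^2)}$, $\norm{\ovsi_1-\ovsi_2}{C^3}$, $\co_{Z^1-Z^2}$) vanishes, while the drift-difference contribution — which enters the estimate integrated in time, see \cref{IneqR} — is, by the previous paragraph, a constant multiple of $\int_s^t\gamma\big(\norm{X_r-Y_r}{C(\T^2)}\big)\,dr$, exactly like the log-Lipschitz term. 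Running the argument of \cref{ConvergenceEstimate} on each interval $[0,t]$ and invoking the Rough Gr\"onwall Lemma (\cref{Rough Gronwall}) as there, we obtain, with $m(t)\coloneqq\sup_{s\le t}\norm{X_s-Y_s}{C(\T^2)}$,
\[
m(t)\lesssim\int_0^t\gamma\big(m(r)\big)\,dr , \qquad m(0)=0 .
\]
Since $\gamma$ is an Osgood modulus of continuity, i.e.\ $\int_{0^+}\gamma(r)^{-1}\,dr=+\infty$, this forces $m\equiv 0$, that is $X_t=Y_t$ for all $t\in[0,T]$.

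The main obstacle is the potential-theoretic bound of the second paragraph: it is precisely here that the measure-preserving hypothesis is used in an essential way, since it is what makes the singular Biot--Savart kernel integrable against the transported vorticity with the sharp (log-Lipschitz) modulus. A secondary point to be careful about is that the drift discrepancy must enter \cref{ConvergenceEstimate} in integrated-in-time form, so that no spurious term such as $t\,\gamma(m(t))$ survives and the Osgood step goes through.
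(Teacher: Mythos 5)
Your argument is correct, and it reaches the conclusion by a genuinely different route at the key potential-theoretic step. The paper does not prove a uniform bound on the drift discrepancy: after running the machinery of \cref{ConvergenceEstimate} to dispose of the rough integral term, it keeps the drift difference in the pointwise form $\int_0^t\int_{\T^2}\lvert K_{BS}(X_r(x)-X_r(y))-K_{BS}(Y_r(x)-Y_r(y))\rvert\,dy\,dr$, then integrates in $x$, uses the $L^1$-type kernel property $\int_{\T^2}\lvert K_{BS}(x-y)-K_{BS}(x'-y)\rvert\,dy\lesssim\gamma(\lvert x-x'\rvert)$ together with measure preservation and the concavity of $\gamma$ (Jensen), and closes an Osgood inequality for the $L^1_x$-quantity $\int_{\T^2}\lvert X_t-Y_t\rvert\,dx$, upgrading to equality everywhere by continuity. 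You instead prove the sup-norm estimate $\norm{u_X(r,\cdot)-u_Y(r,\cdot)}{L^\infty}\lesssim_{\norm{w_0}{L^\infty}}\gamma(\norm{X_r-Y_r}{C(\T^2)})$ by the classical near/far splitting (Yudovich--Marchioro--Pulvirenti style), using measure preservation both for the pushforward density bound and for the changes of variables, and then feed this, pointwise in time, into the argument of \cref{ConvergenceEstimate}, closing the Osgood inequality directly for $\sup_x$. Your route costs an extra (standard but not quoted in the paper) kernel estimate, whereas the paper reuses only the stated $L^1$ log-Lipschitz property of $K_{BS}$; in exchange, your drift comparison matches the sup-norm form of \cref{MainEstimateLagrangian}, so the stability estimate is invoked almost as a black box. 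You are also right, and it matters, that the drift discrepancy must be tracked in integrated-in-time form rather than through the $\norm{u_1-u_2}{L^{\infty}_{t,x}}T$ term of \cref{MainEstimateLagrangian} (otherwise a term $t\,\gamma(m(t))$ survives that cannot be absorbed), and that \cref{ConvergenceEstimate} must be used in its localized-controlled-rough-path variant, as the paper itself licenses in \cref{ExistenceLagr}; only the trivial large-$d$ case of your kernel estimate, where one bounds $u_X$ and $u_Y$ separately, is left implicit.
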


\begin{remark}
	An existence result for \cref{eq:eulerFlowII} can be easily produced by combining \cref{ConvergenceEstimate}, \cref{ExistenceLagr} and the iterative procedure proposed in \cite{BFM16}. We avoid replicating such method, as it plays no role in the uniqueness of solutions to \cref{eq:rougheuler}. 
\end{remark}

\begin{proof}
	Let $x\in \T^2$. Then,
	\begin{multline*}
	X_t(x)-Y_t(x)
	=
	\int_0^t \int_{\T^2} \Big[ \big( K_{BS}(X_r(x)-X_r(y) \big) - \big( K_{BS}(Y_r(x)-Y_r(y) \big) \Big] w_0(y) dy dr \\
	+\int_0^t \ovsi(X_r(x))-\ovsi(Y_r(x))d\mathbf{Z}_r.
	\end{multline*}
	By following the proof of \cref{ConvergenceEstimate}, we end up with
	\begin{multline*}
	|X_t(x)-Y_t(x)|
	\lesssim 
	\norm{w_0}{L^{\infty}} \\
	\int_0^t \int_{\T^2} \Big|\big(K_{BS}(X_r(x)-X_r(y)\big) - \big(K_{BS}(Y_r(x)-Y_r(y)\big)\Big|dydr.
	\end{multline*}
	
	Now, recalling that the Biot-Savart kernel enjoys the property
	\[
	\int_{\T^2}|K_{BS}(x-y)-K_{BS}(x'-y)|dy\lesssim \gamma(|x-x'|),\quad \forall x,x' \in \T^2,
	\]
	we can integrate in space and exploit $\gamma$ being concave to obtain that
	\[
	\int_{\T^2}|X_t(x)-Y_t(x)|dx
	\lesssim 
	\norm{w_0}{L^{\infty}}\int_0^t \gamma\Big(\int_{\T^2}\Big|X_r(x)-Y_r(x)\Big|dx\Big)dr .
	\]
	At this point, as in \cref{ExistenceLagr}, a comparison argument yields the thesis.
\end{proof}


\section{Uniqueness of the solution}\label{sec:uniq}
In this section we exploit the uniqueness results obtained for the Lagrangian flows to prove that there exists a unique solution for the two-dimensional Euler equation \eqref{eq:origEuler}. 
To this aim, we show that $w_t=(\phi_t)_\# w_0$ following Dobrushin's strategy \cite{D79}, i.e. proving that 
\begin{equation}\label{eq:DubStyle}
\int_{\T^2}w_t(x)h_t(x)dx=\int_{\T^2}w_0(x)h_0(x)dx,
\end{equation}
where $h_t\coloneqq\psi (\phi^{-1}_t(x))$.\\
Since $w^{\natural}$ and $h^{\natural}$ are poorly regular, we cannot test directly the equation for $w_t$ against the one satisfied by $h_t$.
Therefore, we must develop a strategy similar to the one of \cite{Hof2,DEYA20193577}. 
There, a doubling variable technique is used to estimate some crucial quantities, such as the energy of the solution. 
Instead, we need to apply it for any smooth function $\psi$ in order to prove that any solution $w$ is transported by its corresponding flow.

\subsection{Doubling variable}
Let $(\sigma_j)_{j=1}^M\subset C^3(\T^2,\R^2)$ be a family of divergence-free vector fields, $w_0\in L^{\infty}(\T^2,\R)$ and $\psi \in C^{\infty}(\T^2,\R)$. Consider, according to \cref{def:roughsolution}, a solution $w$ to \cref{eq:rougheuler} with initial condition $w_0$ and consider $\phi$, $h$ as in \cref{thm:h_t}: $\phi$ solving
\begin{equation}\label{eq:flowSegniGiusti}
\phi_t=I_{\T^2}+\int_0^tu(r,\phi_r)dr-\int_0^t\ovsi(\phi_r)d\mathbf{Z}_r,
\end{equation}
where $u=K_{BS} \ast w$, and $h=\psi (\phi^{-1})$. 

Let us first state the following (cf. \cite{Hof1,Hof2, DEYA20193577}). 
From now on, we are also going to denote the Sobolev space $W^{n,2}(\T^2 \times \T^2)$ by $W^{n,2}_{\otimes}$.
\begin{lemma}\label{lemma:g}
	The mapping $g:[0,T]\to W^{0,2}_{\otimes}$ defined by $g_s(x,y)=w_s(x) \otimes h_s(y)$ satisfies the equation
	\begin{equation}\label{eq:doubleRPDE}
	g\st
	=
	-\int_s^t \left( u_r \sg w_r \otimes h_r + w_r \otimes u_r \sg h_r \right) dr + \Gamma^1\st g_s+\Gamma^2\st g_s+g^{\natural}\st,
	\end{equation}
	where $g^{\natural} \in C_{2,\coL,L}^{p/3\var}(W^{-3,\infty}_{\otimes})$ and $(\Gamma^1,\Gamma^2)$ is an unbounded rough driver on the scale $\{W^{n,2}_{\otimes}\}_n$ (see \cref{sec:URD&SO})  defined by 
	\begin{equation}\label{eq:urd2}
	\Gamma^1\st=A^1\st\otimes I +I\otimes A^1\st, \qquad \Gamma^2\st=A^2\st\otimes I+I\otimes A^2\st+A^1\st\otimes A^1\st,
	\end{equation}
	with $A^{i}$, for $i=1,2$, given by \cref{eq:urd1}.
\end{lemma}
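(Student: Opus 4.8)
The plan is to argue by smooth approximation, exactly as in the proofs of \cref{th:vanishvisc} and \cref{thm:h_t}, so that the tensor equation holds classically and then survives passing to the limit. Let $\{Z^N\}$ and $\{\sigma^N_j\}$ be the mollified objects used there, let $w^N\in C^\infty$ solve \cref{eq:origNS} with viscosity $1/N$ and mollified initial datum and coefficients, put $u^N=K_{BS}\ast w^N$, let $\phi^N$ be the smooth flow of \cref{eq:flowSegniGiusti} driven by $(u^N,\sigma^N_j,Z^N)$ and set $h^N=\psi(\phi^{N,-1})$. Then $g^N:=w^N\otimes h^N$ is smooth and, differentiating in time and using Leibniz together with the equations for $w^N$ and $h^N$ (recall $\partial_t h^N=-u^N\sg h^N+\ovsi\sg h^N\,\dot{Z}^N$ from the proof of \cref{thm:h_t}),
\[
\partial_t g^N_t=\Big(\tfrac1N\Delta w^N_t-u^N_t\sg w^N_t\Big)\otimes h^N_t-w^N_t\otimes\big(u^N_t\sg h^N_t\big)+\Gamma^{N,\mathrm{gen}}_j g^N_t\,\dot{Z}^{N,j}_t,
\]
where $\Gamma^{N,\mathrm{gen}}_j=(\sigma^N_j\sg)\otimes I+I\otimes(\sigma^N_j\sg)$ is the generator of $\Gamma^{N,1}$.

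First I would iterate the $\dot{Z}^N$-term of this classical PDE exactly as in \cref{sec:formEq}: one expansion produces $\Gamma^{N,1}_{s,t}g^N_s$ plus a double integral, a second expansion produces $\Gamma^{N,\mathrm{gen}}_j\Gamma^{N,\mathrm{gen}}_i g^N_s\,\Z^{N,i,j}_{s,t}$ plus triple integrals, and one verifies the algebraic identity $\Gamma^{N,\mathrm{gen}}_j\Gamma^{N,\mathrm{gen}}_i g^N_s\,\Z^{N,i,j}_{s,t}=\Gamma^{N,2}_{s,t}g^N_s$ with $\Gamma^{N,2}$ as in \cref{eq:urd2}; here the cross term $(\sigma^N_j\sg w^N_s)\otimes(\sigma^N_i\sg h^N_s)$ reassembles into $(A^{N,1}_{s,t}w^N_s)\otimes(A^{N,1}_{s,t}h^N_s)$ precisely because $\mathbf Z$ is geometric (so $\Z^{i,j}_{s,t}+\Z^{j,i}_{s,t}=Z^{i}_{s,t}Z^{j}_{s,t}$), which is exactly the $A^1\otimes A^1$ summand of $\Gamma^2$. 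Everything left over — the iterated integrals still carrying a drift factor $\tfrac1N\Delta w^N-u^N\sg w^N$ or $-u^N\sg h^N$, or an innermost factor $g^N$ — is taken as the definition of $g^{N,\natural}_{s,t}$, the exact analogue of \cref{eq:remainder}. One also has to record that $(\Gamma^1,\Gamma^2)$ is an unbounded rough driver on $\{W^{n,2}_\otimes\}$: Chen's relation \cref{eq:chenurd} for it follows from the ones for $(A^1,A^2)$ and for $\mathbf Z$, and the bounds \cref{eq:controlurd} from those on $(A^1,A^2)$, using that $\sigma_j\sg$ maps $W^{-n,2}\to W^{-n-1,2}$ for $n\le 2$ since $\sigma_j\in C^3$.

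Next I would establish, uniformly in $N$, an estimate in the spirit of \cref{lemma:apriori}: mollifying on the scale $W^{n,1}_\otimes$, pairing $g^{N,\natural}$ and $\delta g^{N,\natural}$ against test functions in $W^{3,1}_\otimes$, and using $\norm{w^N}{L^\infty_{t,x}}\lesssim\norm{w_0}{L^\infty}$ (by \cref{ineq:BoundInf}), $\norm{h^N}{L^\infty_{t,x}}\le\norm{\psi}{L^\infty}$, the Biot--Savart bound on $u^N$ and the fact that $\tfrac1N\Delta w^N$ contributes only a factor $O(1/N)$, the sewing bound \cref{ineq: corSewing} yields $\omega_{g^{N,\natural}}(s,t)\lesssim\omega_A(s,t)+|t-s|^{p/3}\big(\omega_A(s,t)^{1/3}+\omega_A(s,t)^{2/3}\big)$ on a localization $(\coL,L)$ independent of $N$; hence $\{g^{N,\natural}\}$ is bounded in $C^{p/3\var}_{2,\coL,L}(W^{-3,\infty}_\otimes)$. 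To pass to the limit, the crucial observation is that although $h^N$ is only uniformly bounded in $L^\infty$, for $\gamma\in C^\infty(\T^2\times\T^2)$ the slice $x\mapsto\int_{\T^2}h^N_t(y)\gamma(x,y)\,dy$ converges in $W^{1,\infty}(\T^2)$ (from $h^N\to h$ in $C_tL^1$, which comes from $\phi^N\to\phi$ uniformly via \cref{ConvergenceEstimate}), while $w^N_t\to w_t$ in $W^{-1,2}$; thus $g^N_{s,t}(\gamma)\to g_{s,t}(\gamma)$, and likewise the drift and the operator terms $\Gamma^{N,i}_{s,t}g^N_s$ converge (using $\sigma^N_j\to\sigma_j$ in $C^2$, $\mathbf Z^N\to\mathbf Z$, and $u^N\to u$ in $C_tL^2$). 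Therefore $g^\natural_{s,t}:=g_{s,t}+\int_s^t(u_r\sg w_r\otimes h_r+w_r\otimes u_r\sg h_r)\,dr-\Gamma^1_{s,t}g_s-\Gamma^2_{s,t}g_s=\lim_N g^{N,\natural}_{s,t}$, and lower semicontinuity of the $p/3$-variation functional upgrades the uniform bound to $g^\natural\in C^{p/3\var}_{2,\coL,L}(W^{-3,\infty}_\otimes)$, which is \cref{eq:doubleRPDE}.

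The main obstacle is twofold: keeping the a priori bound on $g^{N,\natural}$ genuinely uniform, which forces one to check that every implicit constant depends only on $\norm{w_0}{L^\infty}$, $\norm{\psi}{L^\infty}$, $\norm{\sigma_j}{C^3}$ and $\omega_Z$ (and that the viscous term is harmless); and finding a topology strong enough to pass to the limit in each term of the iterated equation yet weak enough to be available, given that the spatial regularity of $h^N=\psi(\phi^{N,-1})$ degenerates as $N\to\infty$ — the device of integrating out one of the two variables against a smooth test function is what makes this last point work.
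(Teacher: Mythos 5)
There is a genuine gap, and it is one of circularity rather than of technique. \cref{lemma:g} must hold for an \emph{arbitrary} solution $w$ in the sense of \cref{def:roughsolution}, because it is precisely the tool used in \cref{thm:uniq} to show that \emph{any} solution is advected by its flow, and hence unique. Your scheme replaces the given $w$ by the viscous solutions $w^N$ of \cref{eq:origNS}, which carry their own self-consistent drift $u^N=K_{BS}\ast w^N$, and at the limiting stage you assert $w^N_t\to w_t$ in $W^{-1,2}$ and $\phi^N\to\phi$ (the flow of the given $u$). But at this point of the paper no uniqueness is available: the compactness argument only gives convergence of a subsequence of $w^N$ to \emph{some} solution of \cref{eq:rougheuler}, with no reason for that limit to coincide with the prescribed $w$ (and consequently no reason for $u^N\to u$, which is what \cref{ConvergenceEstimate} needs to give $\phi^N\to\phi$). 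So your argument proves the lemma only for the particular solution constructed in \cref{th:vanishvisc}, which would make the subsequent uniqueness proof circular. The $h$-side of your approximation is unproblematic, since the flow equation is well-posed by \cref{ExistenceLagr} and one may mollify the \emph{given} $u$ and the driver to get $\phi^N\to\phi$; the $w$-side is not, because an arbitrary solution does not come with a smooth approximating sequence solving a classical equation.

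The paper avoids this entirely: it takes the expansion \cref{eq:rougheuler} satisfied by $w$ \emph{by definition} (with the remainder estimate of \cref{lemma:apriori}) and the equation for $h$ provided by \cref{thm:h_t}, and tensorizes them algebraically, writing $g_{s,t}=w_{s,t}\otimes h_s+w_s\otimes h_{s,t}+w_{s,t}\otimes h_{s,t}$, substituting the two expansions, collecting the first- and second-order terms into $\Gamma^1_{s,t}g_s+\Gamma^2_{s,t}g_s$, and estimating everything left over (products of remainders, drift contributions, cross increments) to obtain $g^{\natural}\in C^{p/3\var}_{2,\coL,L}$, as in \cite{Hof2,DEYA20193577}. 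Your algebraic observations (Chen's relation for $(\Gamma^1,\Gamma^2)$, geometricity producing the $A^1\otimes A^1$ term, uniform bounds via smoothing operators) are all sound and reusable, but they must be applied directly to the rough expansions of the given $w$ and of $h$, not to a fresh viscous approximation of $w$.
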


In our setting, the above result follows directly from \cref{thm:h_t}.

Now, we want to test \cref{eq:doubleRPDE} against a smooth approximation of the diagonal $\delta_{x=y}$. Following \cite{Hof2}, let us denote $x_{\pm}\coloneqq\frac{x\pm y}{2}$ and define, for $n \in N$, the space
\begin{multline*}
\mathcal{E}^{n,\infty} 
\coloneqq 
\Big\{ \alpha\in W^{n,\infty}(\R^2\times \R^2) |\alpha(x+2k\pi,y+2k\pi)=\alpha(x,y) \ \forall k\in\Z^2 , \\
|x_{-}|>1 \implies \alpha(x,y)=0 \Big\} 
\end{multline*}
normed by 
\begin{equation*}
\norm{\alpha}{n,\nabla} 
\coloneqq 
\max_{k+l\le n}|\grad_{+}^k\grad_{-}^l\alpha(x,y)|_{L^{\infty}_{x,y}},
\end{equation*}
where $\grad_{\pm}\coloneqq\frac{1}{2}(\grad_{x}\pm \grad_{y})$.\\
We also need to introduce the dual spaces $\mathcal{E}^{-n,\infty} \coloneqq \big( \mathcal{E}^{n,\infty} \big)^*$, the duality pairing 
\begin{equation*}
\langle \gamma, \alpha \rangle_{\grad} 
\coloneqq 
\int_{B(0,1)} \int_{\T^2} \gamma (x_{+}+x_{-}, x_{+}-x_{-}) \alpha (x_{+}+x_{-}, x_{+}-x_{-}) d x_{+} d x_{-} 
\end{equation*}
and a blow-up transformation: for $\epsilon>0$ and $\alpha\in \mathcal{E}^{n,\infty}$, it is given by the map 
\begin{equation*}
T_{\epsilon} \alpha (x,y) 
\coloneqq 
\frac{1}{\epsilon^2} \alpha \Big( x_{+} + \frac{x_{-}}{\epsilon}, x_{+} - \frac{x_{-}}{\epsilon} \Big) ,
\end{equation*}
whose adjoint operator is $T_{\epsilon}^*\gamma (x,y) = \gamma (x_{+}+\epsilon x_{-}, x_{+}-\epsilon x_{-})$.

At this point, up to extending by periodicity, we can test $g\st$ against $T_{\epsilon}\alpha$. Let us start with the drift term.

\begin{lemma}\label{driftRPDEBlowup}
	\begin{enumerate}
		\item Let $\alpha \in \mathcal{E}^{1,\infty}$ and $\nu\st\in W^{-1,2}_{\otimes}$ be defined as 
		\[
		\nu\st 
		\coloneqq 
		-\int_s^t u_r \sg w_r \otimes h_r + w_r \otimes u_r \sg h_r dr . 
		\]
		Then 
		\begin{equation}\label{eq:driftUnifBound}
		|\nu \st(T_{\epsilon}\alpha)|
		\leq 
		C \norm{\alpha}{1,\grad}|t-s| , 
		\end{equation}
		where $C$ is a constant only depending on $\norm{w}{L^{\infty}_{t,x}}$ and $\norm{\psi}{L^{\infty}}$.
		\item Let $\alpha(x,y) = \beta(x_{+}) f(|x_{-}|^2)$, with $\beta \in C^{\infty}(\T^2,\R)$ and $f\in C^\infty(\R,\R^+)$  such that it has support contained into $[0,1]$ and $\int_{B(0,1)} f(|x_-|^2)dx_-=2$. Then,
		\begin{equation}\label{eq:driftLimit}
		\lim_{\epsilon\to 0}\nu \st(T_{\epsilon}\alpha)=\int_s^t\int_{\T^2}w_r(x)h_r(x)u_r(x)\sg \beta(x)dx.
		\end{equation}
	\end{enumerate}
\end{lemma}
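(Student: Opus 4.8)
The plan is to transfer every spatial derivative onto the test function by integration by parts, and then read off the effect of the blow–up $T_\epsilon$ in the midpoint/difference variables $x_\pm$. Since the $\sigma_j$, hence $u_r=K_{BS}\ast w_r$, are divergence–free, one has $u_r\cdot\nabla w_r=\div(u_r w_r)$ and $u_r\cdot\nabla h_r=\div(u_r h_r)$ in the distributional sense; testing against $\psi\in W^{1,2}_\otimes$ and integrating by parts in $x$ and in $y$ respectively gives
\[
\nu\st(\psi)=\int_s^t\int_{\T^2\times\T^2}w_r(x)\,h_r(y)\,\big(u_r(x)\cdot\nabla_x+u_r(y)\cdot\nabla_y\big)\psi(x,y)\,dx\,dy\,dr .
\]
Writing $u_r(x)\cdot\nabla_x+u_r(y)\cdot\nabla_y=\big(u_r(x)+u_r(y)\big)\cdot\nabla_+ +\big(u_r(x)-u_r(y)\big)\cdot\nabla_-$ and using the elementary identities $\nabla_+T_\epsilon\alpha=T_\epsilon\nabla_+\alpha$ and $\nabla_-T_\epsilon\alpha=\epsilon^{-1}T_\epsilon\nabla_-\alpha$, the integrand splits into a \emph{regular part} (the $\nabla_+$ one) and a \emph{singular part} (the $\nabla_-$ one, carrying the extra factor $\epsilon^{-1}$). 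Throughout I will use that $\norm{u_r}{L^\infty}\lesssim\norm{w}{L^\infty_{t,x}}$, that $\norm{h_r}{L^\infty}=\norm{\psi}{L^\infty}$ because $h=\psi(\phi^{-1})$ with $\phi^{-1}$ a homeomorphism of $\T^2$ (\cref{thm:h_t}), and that $\norm{T_\epsilon\varphi}{L^1}\lesssim\norm{\varphi}{L^\infty}$, since $T_\epsilon\varphi$ is supported on a set of measure $O(\epsilon^2)$ while $\norm{T_\epsilon\varphi}{L^\infty}=\epsilon^{-2}\norm{\varphi}{L^\infty}$.

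For the first claim, the regular part is at once bounded by $C\,\norm{\alpha}{1,\nabla}\,|t-s|$ with $C=C(\norm{w}{L^\infty_{t,x}},\norm{\psi}{L^\infty})$, because $|u_r(x)+u_r(y)|\le 2\norm{u_r}{L^\infty}$. The singular part is the delicate one. After the rescaling $x_-=\epsilon z_-$ its inner integral in $z_-$ takes the form $\int_{B(0,1)}V(z_-)\cdot\nabla_{z_-}G(z_-)\,dz_-$, where $G(z_-)=\alpha(x_++z_-,x_+-z_-)$ is compactly supported (so no boundary term arises) and $V(z_-)=w_r(x_++\epsilon z_-)\,h_r(x_+-\epsilon z_-)\,\epsilon^{-1}\big(u_r(x_++\epsilon z_-)-u_r(x_+-\epsilon z_-)\big)$, having used $(\nabla_-\alpha)(x_++z_-,x_+-z_-)=\tfrac12\nabla_{z_-}G(z_-)$. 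Freezing the scalar weight at the midpoint, $w_r(x_++\epsilon z_-)h_r(x_+-\epsilon z_-)=w_r(x_+)h_r(x_+)+(\text{oscillation})$, the midpoint piece is a divergence–free vector field in $z_-$, since $\div_{z_-}\big(u_r(x_++\epsilon z_-)-u_r(x_+-\epsilon z_-)\big)=\epsilon\big[(\div u_r)(x_++\epsilon z_-)+(\div u_r)(x_+-\epsilon z_-)\big]=0$, and hence integrates to zero against $\nabla_{z_-}G$; the oscillatory piece is estimated crudely using $\epsilon^{-1}|u_r(x)-u_r(y)|\lesssim\epsilon^{-1}\gamma(2\epsilon|z_-|)$, which is integrable over $B(0,1)$, together with the $L^\infty$–bounds on $w$ and $h$. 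Assembling the two contributions gives $|\nu\st(T_\epsilon\alpha)|\le C\,\norm{\alpha}{1,\nabla}\,|t-s|$.

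For the second claim take $\alpha(x,y)=\beta(x_+)f(|x_-|^2)$, so that $\nabla_+\alpha=\tfrac12\nabla\beta(x_+)f(|x_-|^2)$ and $\nabla_-\alpha=\beta(x_+)f'(|x_-|^2)x_-$. In the regular part, after the rescaling $x_-=\epsilon z_-$ the integrand converges as $\epsilon\to0$ — by uniform continuity of $u_r$ and $h_r$ on $\T^2$ and continuity of translations in $L^1$ for $w_r$ — and is dominated; dominated convergence gives that it tends to a fixed multiple of $\int_s^t\int_{\T^2}w_r(x)h_r(x)u_r(x)\cdot\nabla\beta(x)\,dx\,dr$, the multiple being $\int_{B(0,1)}f(|z_-|^2)\,dz_-$ up to a universal constant, and the normalization $\int_{B(0,1)}f(|x_-|^2)\,dx_-=2$ is chosen precisely so that this becomes the right–hand side of \eqref{eq:driftLimit}. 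For the singular part, after the same rescaling it equals a universal constant times
\[
\int_s^t\int_{\T^2}\int_{B(0,1)}w_r(x_++\epsilon z_-)\,h_r(x_+-\epsilon z_-)\,\beta(x_+)\,\frac{u_r(x_++\epsilon z_-)-u_r(x_+-\epsilon z_-)}{\epsilon}\cdot z_-\;f'(|z_-|^2)\,dz_-\,dx_+\,dr .
\]
Since $u_r=K_{BS}\ast w_r\in W^{1,p}(\T^2)$ for every finite $p$, the difference quotient converges to $2\big(\nabla u_r(x_+)\big)z_-$ in $L^p_{x_+}$ with an $\epsilon$–uniform $L^1$–bound $\lesssim|z_-|\norm{\nabla u_r}{L^1}$; passing to the limit and using that $\int_{B(0,1)}z_-^iz_-^jf'(|z_-|^2)\,dz_-$ is a scalar multiple of $\delta_{ij}$ by radial symmetry, the limiting $x_+$–integrand is proportional to $\sum_i\partial_i u_r^i=\div u_r=0$. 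Hence the singular part tends to $0$ and only the regular part survives, which is exactly \eqref{eq:driftLimit}.

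The main obstacle is the singular ($\nabla_-$) term in the first claim: the factor $\epsilon^{-1}$ cannot be absorbed by a naive estimate, and its control requires using \emph{both} that $u_r$ is divergence–free (to annihilate the principal contribution through the integration by parts in the transverse variable $z_-$, once the weight is frozen at the midpoint) and that it is log-Lipschitz with exactly the modulus $\gamma$ produced by the Biot–Savart kernel (to estimate the residual oscillatory contribution), all while keeping only $\norm{w}{L^\infty_{t,x}}$ and $\norm{\psi}{L^\infty}$ in the constant. The very same divergence–free cancellation reappears — in the cleaner pointwise form $\div u_r=0$ — as the reason the singular term disappears in the limit in the second claim.
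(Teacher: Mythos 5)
Your treatment of the second claim is essentially the paper's argument (difference quotients of $u_r\in W^{1,p}$, radial symmetry of $\int x_-^ix_-^jf'(|x_-|^2)dx_-$, and $\div u_r=0$), and the regular ($\grad_+$) part of the first claim is fine. The genuine gap is in your estimate of the singular ($\grad_-$) part of the first claim. After freezing $w_r h_r$ at the midpoint, the divergence-free cancellation indeed kills the frozen piece, but the remaining oscillatory piece carries no smallness: $w$ is only $L^\infty$, so the oscillation of $w_r(x_++\epsilon z_-)h_r(x_+-\epsilon z_-)$ is merely bounded by $2\norm{w}{L^\infty_{t,x}}\norm{\psi}{L^\infty}$, and your crude bound on the velocity factor, $\epsilon^{-1}|u_r(x_++\epsilon z_-)-u_r(x_+-\epsilon z_-)|\lesssim \epsilon^{-1}\gamma(2\epsilon|z_-|)=2|z_-|\bigl(1-\log(2\epsilon|z_-|)\bigr)$, is integrable in $z_-$ for each fixed $\epsilon$ but its integral over $B(0,1)$ grows like $|\log\epsilon|$ as $\epsilon\to0$. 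Hence your argument only yields $|\nu\st(T_\epsilon\alpha)|\lesssim (1+|\log\epsilon|)\norm{\alpha}{1,\grad}|t-s|$, not \cref{eq:driftUnifBound} with a constant depending only on $\norm{w}{L^\infty_{t,x}}$ and $\norm{\psi}{L^\infty}$. Uniformity in $\epsilon$ is exactly what the lemma is for: it is what makes the drifts $\{\nu^\epsilon\}_{\epsilon>0}$ uniformly bounded in $C^{1\var}(\mathcal{E}^{-1,\grad})$ in \cref{prop:doubleRPDEBlowup}, so the logarithmic loss cannot be waved away.

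The fix requires no cancellation at all and is the tool you yourself invoke in the second claim: estimate the singular term by integrating in $x_+$ first and using the $L^1$ difference-quotient bound for Sobolev functions,
\begin{equation*}
\int_{\T^2}\frac{\bigl|u_r(x_++\epsilon x_-)-u_r(x_+-\epsilon x_-)\bigr|}{\epsilon}\,dx_+\;\le\;2|x_-|\,\norm{\grad u_r}{L^1(\T^2)}\;\lesssim\;\norm{w}{L^\infty_{t,x}},
\end{equation*}
the last step by Calder\'on--Zygmund for $u_r=K_{BS}\ast w_r$. Bounding $w$, $h$ and $\grad_-\alpha$ in $L^\infty$ and integrating in $x_-$ over $B(0,1)$ and in time then gives $|\nu\st(T_\epsilon\alpha)_-|\le C\norm{\alpha}{1,\grad}|t-s|$ uniformly in $\epsilon$, which is precisely the paper's proof of the first claim; the pointwise log-Lipschitz modulus $\gamma$ plays no role there.
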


\begin{proof}
	\begin{enumerate}
		\item We have that
		\begin{align*}
		\nu\st(T_{\epsilon}\alpha)&=\int_s^t dr \int \frac{1}{\epsilon^2}u_r(x)w_r(x)h_r(y) \grad_x \Big[\alpha\Big(x_{+}+\frac{x_{-}}{\epsilon},x_{+}-\frac{x_{-}}{\epsilon}\Big)\Big]dxdy\\
		&+\int_s^t dr\int \frac{1}{\epsilon^2}u_r(y)w_r(x)h_r(y) \grad_y\Big[\alpha\Big(x_{+}+\frac{x_{-}}{\epsilon},x_{+}-\frac{x_{-}}{\epsilon}\Big)\Big]dxdy\\
		&=\frac{1}{2}\int_s^t dr\int w_r(x_{+}+\epsilon x_{-})h_r(x_{+}-\epsilon x_{-})u_r^{\epsilon,+}(x_{+},x_{-})\cdot
			\big(\grad_{+}\alpha\big) (x,y)dx_{+}dx_{-}\\
		&+\frac{1}{2}\int_s^t dr\int w_r(x_{+}+\epsilon x_{-})h_r(x_{+}-\epsilon x_{-})\frac{1}{\epsilon}u_r^{\epsilon,-}(x_{+},x_{-})\cdot
			\big(\grad_{-}\alpha\big) (x,y)dx_{+}dx_{-}\\
        &\eqqcolon \nu \st(T_{\epsilon}\alpha)_++\nu \st(T_{\epsilon}\alpha)_- ,
		\end{align*}
		where $u^{\epsilon,\pm}_r(x_{+},x_{-})\coloneqq\Big[u_r(x_{+}+\epsilon x_{-})\pm u_r(x_{+}-\epsilon x_{-})\Big]$.\\
		The first term can be easily estimated uniformly in $\epsilon$. Instead, for the second term
		\begin{multline*}
		\Big|\int_s^t dr \int_{B(0,1)}dx_{-}\int_{\T^2}dx_{+} w_r(x_{+}+\epsilon x_{-}) h_r(x_{+}-\epsilon x_{-})\frac{1}{\epsilon}u_r^{\epsilon,-}(x_{+},x_{-})\cdot
		\big(\grad_{-}\alpha\big) (x,y)\Big| \\
        \le 
		\|w\|_{L^{\infty}_{t,x}}\|\psi\|_{L^{\infty}}\|\alpha\|_{1,\grad}\int_s^t dr\int_{B(0,1)}dx_{-}\int_{\T^2}dx_{+}\Big| \frac{u^{\epsilon,-}_r(x_{+},x_{-})}{\epsilon}\Big| \\
        \le{} 
        \|w\|_{L^{\infty}_{t,x}}\|\psi\|_{L^{\infty}}\|\alpha\|_{1,\grad}\|u\|_{L^{\infty}_t W^{1,1}_x}|t-s|.
		\end{multline*}
		\item It is enough to prove that $\lim_{\epsilon\to 0}\nu \st(T_{\epsilon}\alpha)_-=0$. In particular, up to approximating the velocity $u$ in $L^{\infty}_T W^{1,1}$ by smooth functions, let us assume that $u$ is smooth in space. \\
		Then,
		\begin{multline*}
		\nu \st(T_{\epsilon}\alpha)_-
        =  
        \int_s^t dr \int_{B(0,1)}dx_-\int_{\T^2}dx_{+}w_r(x_++\epsilon x_-)h_r(x_+-\epsilon x_-)\beta(x_+) \\
		\Big(\int_0^1 \grad^Tu_r(x_+-\epsilon x_-+2\theta \epsilon x_-)x_- d\theta\Big)\cdot x_-f'(|x_-|^2) ,
		\end{multline*}
		which, as $\epsilon$ goes to $0$, converges to 
		\begin{multline*}
		\int_s^tdr\int_{\T^2}dx_+w_r(x_+)h_r(x_+)\beta (x_+)\sum_{i,j=1}^2\partial_ju^i_r(x_+)\int_{B(0,1)}x_-^jx_-^if'(|x_-|^2)dx_- \\
		= 
		\int_s^tdr\int_{\T^2}dx_+w_r(x_+)h_r(x_+)\beta (x_+) \text{div}(u_r)(x_+) \int_0^1\pi\rho^3f'(\rho^2)d\rho 
		= 
		0
		\end{multline*}
	\end{enumerate}
\end{proof}
It is now the turn of the rough terms to be tested against $T_{\epsilon}\alpha$. The following has already been proved in \cite{DEYA20193577}, hence we are only going to give the statement. 

\begin{lemma}\label{diffusionRPDEBlowup}
	Consider the unbounded rough drivers $(\Gamma^1\st,\Gamma^2\st)$ defined in \cref{eq:urd2}:  $\big( \Gamma^{1,\epsilon}\st, \Gamma^{2,\epsilon}\st \big)_{\epsilon} \coloneqq \big( T_{\epsilon}^* \Gamma^1\st T_{\epsilon}^{*,-1}, T_{\epsilon}^* \Gamma^2\st T_{\epsilon}^{*,-1} \big)_{\epsilon}$ is a bounded family of unbounded rough drivers on the scale $\mathcal{E}^{n,\grad}$.
 
	Moreover, if $\alpha(x,y) = \beta(x_{+}) f(|x_{-}|^2)$, with $\beta \in C^{\infty}(\T^2,\R)$ and $f\in C^\infty(\R,\R^+)$  such that it has support contained into $[0,1]$ and $\int_{B(0,1)} f(|x_-|^2)dx_-=2$, then 
	\begin{equation*}
	\lim_{\epsilon\to 0} \langle T_{\epsilon}^*g_s, \Gamma^{i,\epsilon,*}\st \alpha\rangle_{\grad}=\int_{\T^2}w_s(x)h_s(x)(A^{i,*}\st \beta)(x)dx, 
	\qquad 
	i = 1,2, 
	\end{equation*}
	where $A^i$ is as in \cref{eq:urd1}.
\end{lemma}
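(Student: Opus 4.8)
The plan is to let the anisotropic rescaling act on the differential operators $\Gamma^i$ explicitly. Writing $x=x_{+}+x_{-}$, $y=x_{+}-x_{-}$ and $\grad_x=\grad_{+}+\grad_{-}$, $\grad_y=\grad_{+}-\grad_{-}$, one has, since $A^1\st\varphi=\sigma_j\sg\varphi\,Z^j\st$ and $A^2\st$ is built analogously from the $\sigma_j$,
\[
\Gamma^1\st\alpha=\Bigl[(\sigma_j(x)+\sigma_j(y))\cdot\grad_{+}\alpha+(\sigma_j(x)-\sigma_j(y))\cdot\grad_{-}\alpha\Bigr]Z^j\st ,
\]
and a similar (second-order) expression for $\Gamma^2\st$. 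Since $T_{\epsilon}^{*}$ is the pull-back by the dilation $(x_{+},x_{-})\mapsto(x_{+},\epsilon x_{-})$, conjugation by $T_{\epsilon}^{*}$ evaluates every coefficient at $(x_{+}+\epsilon x_{-},x_{+}-\epsilon x_{-})$ and multiplies each occurrence of $\grad_{-}$ by $\epsilon^{-1}$. The whole statement then reduces to a bound/limit analysis of these $\epsilon$-dependent coefficients on the band $\{|x_{-}|\le 1\}$, which contains the support of every $\alpha\in\mathcal{E}^{n,\infty}$.

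For the boundedness assertion, Chen's relations for $(\Gamma^{1,\epsilon}\st,\Gamma^{2,\epsilon}\st)$ are inherited from those of $(\Gamma^1\st,\Gamma^2\st)$ by conjugating with the $(s,t)$-independent operator $T_{\epsilon}^{*}$, so only the uniform-in-$\epsilon$ operator bounds are at stake. In $\Gamma^1$ the factor $\epsilon^{-1}$ in front of $\grad_{-}$ is harmless because $\sigma_j(x_{+}+\epsilon x_{-})-\sigma_j(x_{+}-\epsilon x_{-})=O(\epsilon|x_{-}|)$; in $\Gamma^2$ the coefficient in front of $\grad_{-}\grad_{-}$, which carries $\epsilon^{-2}$, turns out to be $O(\epsilon^2|x_{-}|^2)$, but this is visible only after one adds $A^2\st\otimes I$, $I\otimes A^2\st$ and $A^1\st\otimes A^1\st$, uses the geometricity identity $\Z^{i,j}\st+\Z^{j,i}\st=Z^i\st\otimes Z^j\st$, and symmetrises in the two lower indices. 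Since $|x_{-}|\le 1$ on the relevant domain, all coefficients of $\Gamma^{i,\epsilon}\st$ and finitely many of their derivatives are then bounded in terms of $\norm{\sigma_j}{C^3}$ uniformly in $\epsilon\in(0,1]$, which gives $\norm{\Gamma^{i,\epsilon}\st}{L(\mathcal{E}^{-n,\infty},\mathcal{E}^{-n-i,\infty})}\lesssim\omega_A(s,t)^{i/p}$ with $\omega_A$ and the implicit constant independent of $\epsilon$.

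For the limit, insert $\alpha(x,y)=\beta(x_{+})f(|x_{-}|^2)$ into the coordinate expression for $\Gamma^{i,\epsilon,*}\st\alpha$ obtained above; by $\sigma_j\in C^3$ its coefficients converge uniformly as $\epsilon\to 0$. On the other side of the pairing, $T_{\epsilon}^{*}g_s(x_{+}+x_{-},x_{+}-x_{-})=w_s(x_{+}+\epsilon x_{-})h_s(x_{+}-\epsilon x_{-})$ converges to $w_s(x_{+})h_s(x_{+})$ in $L^1(\T^2\times B(0,1))$, using translation continuity of $w_s\in L^\infty\subset L^1$ together with uniform continuity of $h_s=\psi\circ\phi_s^{-1}$; this suffices to pass to the limit in $\langle T_{\epsilon}^{*}g_s,\Gamma^{i,\epsilon,*}\st\alpha\rangle_{\grad}$. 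In the limiting integral the $\grad_{+}$-part of the operator reproduces $A^{i,*}\st$ acting on $\beta$, and the $x_{-}$-integral $\int_{B(0,1)}f(|x_{-}|^2)\,dx_{-}=2$ collapses the remaining constants, producing exactly $\int_{\T^2}w_s(x)h_s(x)(A^{i,*}\st\beta)(x)\,dx$; every surviving term still carrying a $\grad_{-}$ yields, after the $x_{-}$-integration, either an odd moment $\int_{B(0,1)}x_{-}f'(|x_{-}|^2)\,dx_{-}=0$ or a second moment $\int_{B(0,1)}x_{-}\otimes x_{-}\,f'(|x_{-}|^2)\,dx_{-}$ contracted against $\grad\sigma_j$ (resp.\ against second derivatives of $\sigma_j$ when $i=2$), which by isotropy is a multiple of the identity and hence proportional to $\div\sigma_j=0$ — precisely the cancellation already exploited in the proof of \cref{driftRPDEBlowup}.

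The main obstacle is the uniform bound for $\Gamma^{2,\epsilon}$: unlike the first-order case, the $\epsilon^{-2}$ in front of $\grad_{-}\grad_{-}$ is not tamed term by term, and one must combine $A^2\st\otimes I+I\otimes A^2\st+A^1\st\otimes A^1\st$, use the geometricity of $\mathbf{Z}$, and symmetrise before the dangerous coefficient reveals itself as $O(\epsilon^2|x_{-}|^2)$; carrying out the Taylor expansions and keeping track of the symmetry so that this holds uniformly on $\{|x_{-}|\le 1\}$ is the delicate computation. Once the coefficients are under control the remainder of the argument is routine, which is why — as the authors do — one may instead simply invoke \cite{DEYA20193577}.
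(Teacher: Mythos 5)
Your overall architecture is correct and is essentially the argument of \cite{DEYA20193577}, which the paper itself invokes in place of a proof: the coordinate expression of $\Gamma^i$ in $\grad_\pm$, the observation that conjugation by $T^*_\epsilon$ rescales coefficients and multiplies $\grad_-$ by $\epsilon^{-1}$, the inheritance of Chen's relation, the $O(\epsilon|x_-|)$ bound for the first-order difference, and the fact that the $\grad_-\grad_-$ coefficient of $\Gamma^{2,\epsilon}$ becomes $O(\epsilon^2|x_-|^2)$ only after combining the three pieces of $\Gamma^2$, symmetrising and using the shuffle identity $\Z^{i,j}\st+\Z^{j,i}\st=Z^i\st Z^j\st$, are all sound, as is the $i=1$ limit. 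The genuine gap is in your cancellation mechanism for the limit when $i=2$. After symmetrisation the $\grad_-\grad_-$ part of $\Gamma^{2,\epsilon,*}\st$ has coefficient $\tfrac12 Z^i\st Z^j\st\,\epsilon^{-2}\big(\sigma_j(x_++\epsilon x_-)-\sigma_j(x_+-\epsilon x_-)\big)\otimes\big(\sigma_i(x_++\epsilon x_-)-\sigma_i(x_+-\epsilon x_-)\big)$, which converges to the \emph{nonzero} tensor $2Z^i\st Z^j\st\,[(x_-\sg)\sigma_j]\otimes[(x_-\sg)\sigma_i](x_+)$. Contracting it with
\begin{equation*}
\grad_-\grad_-\alpha=\tfrac14\,\beta(x_+)\bigl(2\,\mathrm{Id}\,f'(|x_-|^2)+4\,x_-\otimes x_-\,f''(|x_-|^2)\bigr)
\end{equation*}
and integrating in $x_-$ produces contributions proportional to $Z^i\st Z^j\st\,\partial_a\sigma_j^b\,\partial_a\sigma_i^b$ and $Z^i\st Z^j\st\,\partial_b\sigma_j^a\,\partial_a\sigma_i^b$: these are contractions of a product of two first derivatives of the $\sigma$'s, not of $\grad\sigma_j$ alone, so they are neither odd moments nor multiples of $\div\sigma_j$, and they do not vanish (take $i=j$: the first is $|\grad\sigma_j|^2(Z^j\st)^2$). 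Your stated rule — odd moments vanish, isotropic second moments give $\div\sigma_j=0$ — would therefore discard terms that are actually nonzero, and the claimed limit would not follow as written.

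What really happens is that these quadratic-in-$\grad\sigma$ leftovers cancel against the contributions of the first-order $\grad_-$ part coming from the non-principal piece $\sigma_j\sg\sigma_i\,\Z^{i,j}\st$ of $A^2\st\otimes I+I\otimes A^2\st$, the adjoint corrections (derivatives falling on the non-constant coefficients), and the $\grad_+\grad_-$ cross terms (whose $O(\epsilon)$ coefficients are odd in $x_-$ and pair with the odd $\grad_-\alpha$, hence survive and also help reconstruct the first-order part of $A^{2,*}\st\beta$). The cancellation rests on the moment identities $\int_{B(0,1)}|x_-|^2 f'(|x_-|^2)\,dx_-=-\int_{B(0,1)}f(|x_-|^2)\,dx_-$ and $\int_{B(0,1)}|x_-|^4 f''(|x_-|^2)\,dx_-=2\int_{B(0,1)}f(|x_-|^2)\,dx_-$ (integration by parts in $x_-$) together with the shuffle identity — formally it is the Leibniz-type identity $A^2\st(wh)=(A^2\st w)h+w(A^2\st h)+(A^1\st w)(A^1\st h)$, valid for geometric $\mathbf{Z}$, read through the blow-up; note that divergence-freeness is not the operative ingredient here, unlike in \cref{driftRPDEBlowup}. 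To complete your proof for $i=2$ you must carry out this full bookkeeping (or, as the authors do, simply cite \cite{DEYA20193577}).
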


We conclude the paragraph by identifying the equation satisfied by $T_{\epsilon}^*g_s$.
In order to estimate the remainder appearing in it, we first report a result from \cite{Hof2}.

\begin{lemma}\label{smoothing2}
	There exists a family of smoothing operators $(\overline{J}^{\eta})_{\eta \in [0,1]}$ on the scale $\mathcal{E}^{n,\grad}$.
\end{lemma}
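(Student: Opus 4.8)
The plan is to carry out the construction of \cref{smoothing1} in the coordinates $(x_{+},x_{-})$ that are adapted to the norm $\norm{\cdot}{n,\nabla}$, and then to fix the support so that the regularised function still vanishes for $|x_{-}|>1$. Since $\grad_{\pm}=\tfrac12(\grad_{x}\pm\grad_{y})$, in the variables $(x_{+},x_{-})$ the norm $\norm{\alpha}{n,\nabla}$ is comparable to $\max_{k+l\le n}\norm{\grad_{+}^{k}\grad_{-}^{l}\tilde\alpha}{L^{\infty}}$, where $\tilde\alpha(x_{+},x_{-})\coloneqq\alpha(x_{+}+x_{-},x_{+}-x_{-})$ is $2\pi\Z^{2}$-periodic in $x_{+}$ and supported in $\{|x_{-}|\le 1\}$; hence it is enough to produce smoothing operators on the scale of $W^{n,\infty}$-functions on $\T^{2}\times\R^{2}$ that vanish outside the closed unit ball in the second variable.

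Fix a compactly supported kernel $\rho\in C_{c}^{\infty}(\R^{2}\times\R^{2})$ with $\int\rho=1$ and support radius $r_{0}\le\tfrac12$, put $\rho_{\eta}(z)\coloneqq\eta^{-4}\rho(z/\eta)$, and let $D^{\lambda}\tilde\alpha(x_{+},x_{-})\coloneqq\tilde\alpha(x_{+},\lambda x_{-})$; for $\lambda\ge1$ the map $D^{\lambda}$ sends functions supported in $\{|x_{-}|\le 1\}$ into functions supported in $\{|x_{-}|\le 1/\lambda\}$. Define $\overline{J}^{\eta}\alpha\coloneqq\rho_{\eta}\ast(D^{1+\eta}\tilde\alpha)$, the convolution being in $(x_{+},x_{-})$, and then read the result back in $(x,y)$ and extend by periodicity. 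Convolution preserves smoothness and $2\pi\Z^{2}$-periodicity in $x_{+}$, and the support in $x_{-}$ lies in a ball of radius $\tfrac1{1+\eta}+r_{0}\eta\le1$ for every $\eta\in[0,1]$; hence $\overline{J}^{\eta}$ maps $\mathcal{E}^{k,\infty}$ into itself for every $k$, which is the structural requirement of \cref{def:smothing}.

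It remains to verify the two quantitative bounds. The estimate $\norm{\overline{J}^{\eta}\alpha}{n+m,\nabla}\lesssim\eta^{-m}\norm{\alpha}{n,\nabla}$ follows from Young's inequality applied to $\grad_{+}^{k}\grad_{-}^{l}\bigl(\rho_{\eta}\ast D^{1+\eta}\tilde\alpha\bigr)$, moving up to $m$ derivatives onto $\rho_{\eta}$ at a cost $\eta^{-1}$ each, together with the uniform boundedness of $D^{\lambda}$ on each $C^{k}$ for $\lambda\in[1,2]$. For $\norm{(I-\overline{J}^{\eta})\alpha}{n,\nabla}\lesssim\eta^{m}\norm{\alpha}{n+m,\nabla}$, one writes $\tilde\alpha-\rho_{\eta}\ast D^{1+\eta}\tilde\alpha=\bigl(\tilde\alpha-\rho_{\eta}\ast\tilde\alpha\bigr)+\rho_{\eta}\ast\bigl(\tilde\alpha-D^{1+\eta}\tilde\alpha\bigr)$: the first term is the classical mollification error, of order $\eta^{m}$ provided $\rho$ has vanishing moments up to order $m-1$; the second is controlled by $\norm{\tilde\alpha-D^{1+\eta}\tilde\alpha}{C^{n}}\lesssim\eta\,\norm{\tilde\alpha}{C^{n+1}}$, since $\rho_{\eta}\ast$ is bounded on $C^{n}$. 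This settles $m=1$ directly; an arbitrary finite $m$ is reached by replacing the single dilation $D^{1+\eta}$ with a finite combination $\sum_{i}c_{i}D^{1+a_{i}\eta}$ ($a_{i}>0$, $\sum_{i}c_{i}=1$), whose coefficients are chosen by Richardson extrapolation so as to cancel the first $m-1$ powers of $\eta$ in the dilation error, each $a_{i}>0$ so that, for $r_{0}$ small enough, the support is still pushed inside the unit ball. The point that requires care — already settled in \cite{Hof2,Hof1}, to which one may in the end simply refer — is exactly this balance: the dilation must be strong enough (of amplitude $\gtrsim\eta$) to reabsorb the spreading produced by $\rho_{\eta}$, yet its companion error must be driven down to order $\eta^{m}$, and the whole construction has to degenerate gracefully near $\{|x_{-}|=1\}$, where membership in $\mathcal{E}^{n,\infty}$ already forces $\alpha$ to vanish to order $n-1$.
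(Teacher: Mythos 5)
Your construction is correct and is essentially the one behind the paper's statement: \cref{smoothing2} is not proved here but quoted from \cite{Hof2}, where the smoothing operators on this scale are built exactly as you do, by mollifying in the $(x_{+},x_{-})$ variables and composing with a dilation in $x_{-}$ chosen strong enough (of amplitude of order $\eta$) to reabsorb the support spreading caused by the mollifier, so that the constraint $\alpha=0$ for $|x_{-}|>1$ is preserved. The only caveat is that a fixed kernel and a fixed extrapolation order yield $\norm{(I-\overline{J}^{\eta})\alpha}{n,\nabla}\lesssim\eta^{m}\norm{\alpha}{n+m,\nabla}$ only for finitely many $m$, but this limitation is shared by the paper's own \cref{smoothing1} (a symmetric mollifier gives $m\le 2$) and is harmless, since only $m\le 2$ is ever used in the estimates.
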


Eventually, the following is what we need.

\begin{proposition}\label{prop:doubleRPDEBlowup}
	For any $\epsilon>0$, given $g$ as in \cref{lemma:g}, $g^{\epsilon}_s\coloneqq T^*_{\epsilon}g_s$ satisfies in $\mathcal{E}^{-3,\grad}$ the equation 
	\begin{equation}\label{eq:doubleRPDEBlowup}
	g^{\epsilon}\st=\nu^{\epsilon}\st+\Gamma^{1,\epsilon}\st g^{\epsilon}_s+
	\Gamma^{2,\epsilon}\st g^{\epsilon}_s+g^{\natural,\epsilon}\st,
	\end{equation}
	where $\nu^{\epsilon}\st(\alpha) = \nu\st(T_{\epsilon}\alpha)$ is defined as in \cref{driftRPDEBlowup}. 
	Moreover, the family $(g^{\epsilon,\natural}\st)_{\epsilon>0}$ is bounded in $ C_{2,\coL,L}^{p/3\var} (\mathcal{E}^{-3,\grad})$.
\end{proposition}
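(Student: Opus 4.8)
The plan is to derive the equation \cref{eq:doubleRPDEBlowup} by simply applying $T^*_\epsilon$ to the double RPDE \cref{eq:doubleRPDE} from \cref{lemma:g}, and then to verify that the resulting remainder $g^{\epsilon,\natural}\st := T^*_\epsilon g^\natural\st$ has $\frac{p}{3}$-variation bounded uniformly in $\epsilon$ with respect to a common localization. Since $T^*_\epsilon$ is linear, testing \cref{eq:doubleRPDE} against $T_\epsilon\alpha$ for $\alpha\in\mathcal{E}^{3,\grad}$ yields, by the very definitions $\nu^\epsilon\st(\alpha)=\nu\st(T_\epsilon\alpha)$ and $\Gamma^{i,\epsilon}\st = T^*_\epsilon\Gamma^i\st T^{*,-1}_\epsilon$ together with $g^\epsilon_s=T^*_\epsilon g_s$, exactly \cref{eq:doubleRPDEBlowup} with $g^{\epsilon,\natural}\st(\alpha)=g^\natural\st(T_\epsilon\alpha)$. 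The only genuine work is the uniform $\frac{p}{3}$-variation bound.

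For that bound I would run the standard sewing argument of \cref{lemma:apriori} directly on $g^{\epsilon,\natural}$, using the smoothing operators $\overline{J}^\eta$ on the scale $\mathcal{E}^{n,\grad}$ provided by \cref{smoothing2}. First I would compute $\delta g^{\epsilon,\natural}_{s,r,t}$ using Chen's relation \cref{eq:chenurd} for $(\Gamma^{1,\epsilon},\Gamma^{2,\epsilon})$, obtaining, as in \cref{lemma:apriori}, an expression of the form $g^\epsilon_{s,r}(\Gamma^{2,\epsilon,*}_{r,t}\alpha) + g^{\epsilon,\dagger}_{s,r}(\Gamma^{1,\epsilon,*}_{r,t}\alpha)$ with $g^{\epsilon,\dagger}\st = \nu^\epsilon\st + \Gamma^{2,\epsilon}\st g^\epsilon_s + g^{\epsilon,\natural}\st$. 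Then I split $\alpha = \overline{J}^\eta\alpha + (I-\overline{J}^\eta)\alpha$ and estimate each piece. The key inputs are: the uniform boundedness of $(\Gamma^{1,\epsilon},\Gamma^{2,\epsilon})$ as unbounded rough drivers on $\mathcal{E}^{n,\grad}$, with common control $\omega_A$, furnished by \cref{diffusionRPDEBlowup}; the uniform drift bound $|\nu^\epsilon\st(\alpha)|\le C\norm{\alpha}{1,\grad}|t-s|$ from \cref{driftRPDEBlowup}(1); and a uniform-in-$\epsilon$ bound $\sup_\epsilon\sup_s\norm{g^\epsilon_s}{\mathcal{E}^{-1,\grad}}\lesssim \norm{w}{L^\infty_{t,x}}\norm{\psi}{L^\infty}$, which follows because $g^\epsilon_s$ acts on $\alpha$ by integrating $w_s(x_++\epsilon x_-)h_s(x_+-\epsilon x_-)$ against $\alpha$ over the unit ball in $x_-$, precisely the mechanism already exhibited in the proof of \cref{driftRPDEBlowup}(1). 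With these, choosing $\eta = \omega_A(s,t)^{1/p}\lambda$ and $\lambda$ small enough to absorb the $\omega_{g^{\epsilon,\natural}}$-term, \cref{ineq: corSewing} (the corrected sewing bound used in \cref{lemma:apriori}) gives $\omega_{g^{\epsilon,\natural}}(s,t)\lesssim_{p,w_0,\psi} \omega_A(s,t) + |t-s|^{p/3}(\omega_A(s,t)^{1/3}+\omega_A(s,t)^{2/3})$, valid on $\{(s,t) : \coL(s,t)\le L,\ \omega_A(s,t)\le\overline{L}\}$, with constants independent of $\epsilon$. This is the asserted uniform bound in $C^{p/3\var}_{2,\coL,L}(\mathcal{E}^{-3,\grad})$.

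The main obstacle is making the smoothing/scaling step uniform in $\epsilon$: a priori the operators $\overline{J}^\eta$ and the constants in \cref{def:smothing} live on $\mathcal{E}^{n,\grad}$ with its $\epsilon$-independent norms, so one must be careful that every quantity entering the estimate — the norms of $\Gamma^{i,\epsilon}$, of $\nu^\epsilon$, of $g^\epsilon_s$, and of the already-constructed $g^{\epsilon,\natural}_{s,r}$ on the intermediate scales $\mathcal{E}^{-k,\grad}$ — is controlled by the same $\omega_A$ and the same $\norm{w}{L^\infty_{t,x}},\norm{\psi}{L^\infty}$, with no hidden $\epsilon^{-1}$ blowing up (the delicate $\frac1\epsilon u^{\epsilon,-}_r$ factor in \cref{driftRPDEBlowup} is exactly where such a blow-up could have occurred, and it was absorbed into $\norm{u}{L^\infty_t W^{1,1}_x}$, which for $u=K_{BS}*w$ is controlled by $\norm{w}{L^\infty_{t,x}}$). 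Granting the uniform boundedness statements of \cref{driftRPDEBlowup,diffusionRPDEBlowup} and the scale-independence of $\overline{J}^\eta$, the argument is otherwise a verbatim transcription of \cref{lemma:apriori}, and the common localization $(\coL,L)$ is inherited from the one produced there for $w^\natural$.
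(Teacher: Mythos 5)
Your proposal is correct and follows essentially the same route as the paper: the equation is obtained by testing \cref{eq:doubleRPDE} against $T_\epsilon\alpha$, and the uniform $\frac{p}{3}$-variation bound on $g^{\natural,\epsilon}$ is obtained by rerunning the a priori estimate of \cref{lemma:apriori} with the smoothing operators of \cref{smoothing2}, the uniform drift bound of \cref{driftRPDEBlowup} and the uniform rough-driver bounds of \cref{diffusionRPDEBlowup}. You simply spell out the sewing/absorption details that the paper delegates to the analogous argument in \cite{DEYA20193577}.
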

\begin{proof}
	The equation satisfied by $g^{\epsilon}_s$ can be easily found by testing \cref{eq:doubleRPDE} against $T_{\epsilon}\alpha$ for all $\alpha \in \mathcal{E}^{3,\grad}$.
	
	Instead, the uniform bound on the localized $\frac{p}{3}$-variation of the remainders $g^{\natural,\epsilon}$ follows from \cref{driftRPDEBlowup}, \cref{diffusionRPDEBlowup} and \cref{smoothing2} with a procedure analogous to the one already showed in \cref{lemma:apriori}. 
    Indeed, as shown in \cite{DEYA20193577}, such procedure can be replicated once we have, like here, a family of smoothing operators, the drifts $\{\nu^{\epsilon}\st\}_{\epsilon>0}$ uniformly in $C^{1\var}(\mathcal{E}^{-1,\grad})$ and $\{(\Gamma^{1,\epsilon}\st,\Gamma^{2,\epsilon}\st)\}_{\epsilon>0}$ uniformly unbounded rough drivers.
\end{proof}

\subsection{Uniqueness result}

We are finally ready to prove the remaining part of \cref{mainthm}, namely that a 2D Euler equation, driven by rough transport noise, has a unique solution whenever the initial condition is bounded.

\begin{theorem} \label{thm:uniq}
	Let $w_0\in L^{\infty}(\T^2)$, $\mathbf{Z}$ be a $p$-geometric rough path, $p \in [2,3)$, and $\{\sigma_j\}_{j=1}^M \subset C^3(\T^2,\R^2)$ be a family of divergence-free vector fields. 
	Given $w$ a solution of \cref{eq:origEuler} in the sense of \cref{def:roughsolution}, then $w$ is unique.
	
	In particular, it can be written as the advection of $w_0$ along the corresponding flow,
	\begin{equation*}
	w_t=(\phi_t)_{\#}w_0,
	\end{equation*}
	where $\phi$ solves \cref{eq:flowSegniGiusti}.
\end{theorem}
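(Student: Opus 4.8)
## Proof Plan

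The plan is to establish uniqueness by showing that any solution $w$ in the sense of \cref{def:roughsolution} must coincide with the push-forward $(\phi_t)_{\#}w_0$, where $\phi$ is the (unique, by \cref{ExistenceLagr}) flow solving \cref{eq:flowSegniGiusti} with drift $u = K_{BS}*w$. Since the push-forward along a measure-preserving flow is uniquely determined once $\phi$ is, and since $\phi$ is determined by $u$, which is in turn determined by $w$, it suffices to prove the advection identity \eqref{eq:DubStyle} for every test function $\psi \in C^\infty(\T^2)$. Once this is done, the map $w \mapsto u \mapsto \phi \mapsto (\phi_t)_{\#}w_0$ is a fixed-point characterization that, combined with \cref{thm:uniqFlowII}, pins down $w$: if $w^1, w^2$ are two solutions with the same data, their flows $\phi^1, \phi^2$ solve the non-local RDE \eqref{eq:eulerFlowII}, hence agree by \cref{thm:uniqFlowII}, and therefore $w^1_t = (\phi^1_t)_{\#}w_0 = (\phi^2_t)_{\#}w_0 = w^2_t$.

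The core of the argument is the advection identity, proved via the doubling-variable scheme assembled in \cref{lemma:g,driftRPDEBlowup,diffusionRPDEBlowup,prop:doubleRPDEBlowup}. First I would consider the blown-up equation \eqref{eq:doubleRPDEBlowup} for $g^\epsilon_s = T^*_\epsilon g_s$ and test it against $\alpha(x,y) = \beta(x_+)f(|x_-|^2)$ with $f$ normalized as in those lemmas. By \cref{prop:doubleRPDEBlowup}, the remainders $g^{\epsilon,\natural}$ are bounded in $C^{p/3\var}_{2,\coL,L}(\mathcal{E}^{-3,\grad})$ uniformly in $\epsilon$, so along the partition they contribute negligibly; this is precisely the feature that makes it legitimate to take the $\epsilon \to 0$ limit after summing over a partition. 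Passing to the limit, the drift term converges by \eqref{eq:driftLimit} to $\int_s^t \int_{\T^2} w_r h_r u_r \sg \beta\, dx\, dr$, and the two rough terms converge by \cref{diffusionRPDEBlowup} to $\int_{\T^2} w_s h_s (A^{i,*}_{s,t}\beta)\, dx$. The upshot is that $t \mapsto \int_{\T^2} w_t(x) h_t(x) \beta(x)\, dx$ solves the \emph{same} rough linear equation against $\beta$ that is solved by $\int_{\T^2} (\phi_t)_{\#}w_0\, h_t\, \beta\, dx$ — indeed, both are driven by the unbounded rough driver $(A^1, A^2)$ with drift involving $u \sg \beta$, and both have a remainder with finite $p/3$-variation. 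Choosing $\beta \equiv 1$ (approximated by a sequence of smooth functions if needed, since $A^{i,*}_{s,t}$ acting on constants contributes only through the divergence-free structure of the $\sigma_j$, which kills it), the rough terms drop out: $A^{1,*}_{s,t}1 = \sigma_j \cdot \nabla 1 \cdot Z^j_{s,t} = 0$ because $\div \sigma_j = 0$ in the dual pairing, and similarly for $A^{2,*}$. One is left with $\int_{\T^2} w_t h_t\, dx - \int_{\T^2} w_s h_s\, dx = \int_s^t \int_{\T^2} w_r h_r\, u_r \sg 1\, dx\, dr = 0$, which gives \eqref{eq:DubStyle} for $\psi$, since $h_0 = \psi(\phi_0^{-1}) = \psi$.

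Having \eqref{eq:DubStyle} for all $\psi \in C^\infty(\T^2)$ means exactly $\int_{\T^2} w_t(x)\psi(\phi_t^{-1}(x))\, dx = \int_{\T^2} w_0(x)\psi(x)\, dx$; by the change of variables $x \mapsto \phi_t(x)$ and the measure-preservingness of $\phi_t$ (from \cref{ExistenceLagr}), this is $\int_{\T^2} w_t(\phi_t(x))\psi(x)\, dx = \int_{\T^2} w_0(x)\psi(x)\, dx$, so $w_t \circ \phi_t = w_0$ a.e., i.e. $w_t = (\phi_t)_{\#}w_0$. I expect the main obstacle to be the careful justification of the limit $\epsilon \to 0$ at the level of the rough equation rather than pointwise in time: one must argue that the limit passes through the sewing/reconstruction that controls the remainder, which is where the uniform $C^{p/3\var}$-bound from \cref{prop:doubleRPDEBlowup} and an argument in the spirit of \cite{DEYA20193577,Hof2} are essential, together with the regularity $w \in C_t W^{-1,1} \cap L^\infty_{t,x}$ and $h \in L^\infty_{t,x}$ needed to make all the pairings and the drift term well-defined and stable under the blow-up. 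A secondary point requiring care is the approximation $\beta \to 1$: one must ensure the rough terms $\int_{\T^2} w_s h_s (A^{i,*}_{s,t}\beta)\, dx$ vanish in the limit, which again uses $\div \sigma_j = 0$ to integrate by parts and move all derivatives off $w_s h_s$, producing a factor $\beta$-independent only through $\nabla \beta \to 0$.
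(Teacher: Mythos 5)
Your proposal is correct and follows essentially the same route as the paper: the doubling-variable/blow-up scheme of \cref{lemma:g,driftRPDEBlowup,diffusionRPDEBlowup,prop:doubleRPDEBlowup}, testing against $\beta(x_+)f(|x_-|^2)$, taking $\beta\equiv 1$ so that the increment of $\int_{\T^2}w_t h_t\,dx$ reduces to an additive remainder of finite $\tfrac{p}{3}$-variation (hence zero), deducing $w_t=(\phi_t)_{\#}w_0$, and concluding uniqueness by feeding both flows into \cref{thm:uniqFlowII}. The only cosmetic difference is that you recover the push-forward identity by a measure-preserving change of variables rather than by choosing $\psi=\overline{\psi}\circ\phi_t$ as the paper does, which is an equivalent step.
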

\begin{proof}
We start proving that any solution $w$ in the sense of \cref{def:roughsolution} is advected by the associated flow.

Consider to this goal $\psi \in C^{\infty}(\T^2,\R)$, $h_t=\psi(\phi_t^{-1})$ and $g^{\epsilon}_s=T_{\epsilon}^*(w_s\otimes h_s)$. 
Given $\alpha(x,y)=\beta(x_{+})f(|x_{-}|^2)$, with $\beta \in C^{\infty}(\T^2,\R)$ and $f\in C^\infty(\R,\R^+)$  with support contained into $[0,1]$ and $\int_{B(0,1)} f(|x_-|^2)dx_-=2$, we can test  \cref{eq:doubleRPDEBlowup} against $\alpha$ and send $\epsilon$ to $0$. 
Then, \cref{driftRPDEBlowup,diffusionRPDEBlowup} imply that
\begin{multline*}
	\int_{\T^2} \left( w_t(x) h_t(x) - w_s(x) h_s(x) \right) \beta(x) dx = \int_s^t \int_{\T^2} w_r(x) h_r(x) u_r(x)\sg \beta(x) dx dr \\ 
	+ \int_{\T^2} w_s(x) h_s(x) A^{1,*}\st \beta(x) dx + \int_{\T^2} w_s(x) h_s(x) A^{2,*}\st \beta(x) dx + \overline{g}^{\natural}\st(\beta),
\end{multline*}
where $\overline{g}^{\natural}\st(\beta)\coloneqq\lim_{\epsilon\to 0}g^{\epsilon,\natural}\st(\alpha)$ has finite localized $\frac{p}{3}$-variation in $W^{-3,\infty}$ thanks to \cref{prop:doubleRPDEBlowup}. In particular, if we choose $\beta\equiv 1$, we get the equality 
\begin{equation*}
	\int_{\T^2} \left( w_t(x) h_t(x) - w_s(x) h_s(x) \right) dx = \overline{g}^{\natural}\st (1) ,
\end{equation*}
which yields that $\overline{g}^{\natural}\st(1)$ is an additive function with finite $\frac{p}{3}$-variation, meaning that it is identically $0$.

Hence, up to approximation with smooth functions, we can take $\psi=\overline{\psi} \circ \phi_t$ for any $\overline{\psi}\in C^{\infty}(\T^2,\R)$ and obtain that
\begin{equation}\label{eq:flowuniq}
	\int_{\T^2}w_t(x)\overline{\psi}(x)dx=\int_{\T^2}w_0(x)\overline{\psi}(\phi_t(x))dx,
\end{equation}
which is, $w_t$ is transported along the associated flow.
		
Consider now two solutions $w^1,w^2$ to \cref{eq:origEuler}, both starting at $w_0$, driven by $\mathbf{Z}$ and the same vector fields $(\sigma_j)_{j=1}^M$. We have proved that $w^i_t=(\phi_t^{i})_{\#}w_0$, $i=1,2$, where $\phi^i_t$ solves \cref{eq:eulerflowI} with drift $K_{BS}\ast w^i_t$.
Noting that
\begin{align*}
	\int_0^t(K_{BS}\ast w^i_r)(\phi^i_r(x))dr
	& = \int_0^t\int_{\T^2}K_{BS}(\phi^i_r(x)-y)w_r^i(y)dydr \\
	& = \int_0^t \int_{\T^2}K_{BS}(\phi^i_r(x)-\phi^i_r(y))w_0(y)dydr,
\end{align*}
hence $\phi^1_t$ and $\phi^2_t$ are both solutions to \cref{eq:eulerFlowII}, by \cref{thm:uniqFlowII} it holds $\phi^1=\phi^2$, concluding the proof.
\end{proof}

\subsection{Continuity of the solution map}
We conclude our work by proving that solutions to $\cref{eq:rougheuler}$ are continuous with respect to initial parameters $w_0$, $\sigma$ and $\mathbf{Z}$. This is not only important per se, but also immediately yields a Wong-Zakai type result for our problem.

\begin{theorem}\label{thm:WZ}
    Let $(L^\infty_{t,x})^{\text{weak},*}$ be  $L^\infty_{t,x}$ equipped with the weak-$*$ topology. Then,
	the solution map 
	\begin{align*}
	L^\infty \times (C^{3})^M \times \rpspace 
	& \to 
	(L^\infty_{t,x})^{\text{weak},*} \cap C_t W^{-1,1} \\
	(w_0,\sigma,\mathbf{Z}) 
	& \mapsto 
	w
	\end{align*}
	is continuous. 
	
	Moreover, if $\{B^{H,N}\}_{N\in\N}$ is a piecewise linear approximation of a fractional Brownian motion $B^H$, with Hurst parameter $H\in(\frac{1}{3},\frac{1}{2}]$, then the solutions $\{w^N\}$ of \cref{eq:origEuler} with $\dot{Z} = \dot{B}^{H,N}$ converge almost surely to the solution $w$ of 
	\begin{equation}\label{eq:wongzakai}
	\begin{cases}
	d w + u \sg w dt = \sum_{j=1}^M \sigma_j \sg w \circ\! d B^{H,j} , \\
	w(0) = w_0 , \quad u = K_{BS}*w .
	\end{cases}
	\end{equation}
\end{theorem}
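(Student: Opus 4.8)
The plan is to obtain the continuity of the solution map by combining the uniform a priori estimates of \cref{sec:Exist} with the uniqueness result \cref{thm:uniq}, along the lines of the compactness argument in the proof of \cref{th:vanishvisc} but now allowing the data to vary; the Wong--Zakai statement is then a direct consequence, obtained by plugging into this continuity statement the canonical rough path lifts of the piecewise linear approximations of $B^H$, which converge almost surely.

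For the continuity, let $(w_0^n,\sigma^n,\mathbf{Z}^n)\to(w_0,\sigma,\mathbf{Z})$ in $L^\infty\times(C^3)^M\times\rpspace$ and let $w^n$ be the associated solutions. The first step is to record uniform bounds. Since $w^n_t=(\phi^n_t)_\#w_0^n$ with $\phi^n_t$ an invertible, measure-preserving map (\cref{ExistenceLagr,thm:uniq}), one has $\norm{w^n}{L^\infty_{t,x}}=\norm{w_0^n}{L^\infty_x}$, so $\{w^n\}$ is bounded in $L^\infty_{t,x}$. As $\mathbf{Z}^n\to\mathbf{Z}$ in the $p$-variation rough path topology, the controls $\omega_{Z^n}$ — hence the controls $\omega_{A^n}$ of the unbounded rough drivers $(A^{n,1},A^{n,2})$ associated with $\sigma^n$, $\mathbf{Z}^n$ via \cref{eq:urd1} — are, for $n$ large, bounded by a single fixed control; choosing $\coL$ to dominate it together with $(s,t)\mapsto|t-s|$ and fixing $L>0$, \cref{lemma:apriori,lemma:apriorisol} yield bounds on $\omega_{w^{n,\natural}}$ in $C^{p/3\var}_{2,\coL,L}(W^{-3,1})$ and on the localized $p$-variation of $w^n$ in $W^{-1,1}$ that are uniform in $n$, the constants depending only on $p$, $\sup_n\norm{w_0^n}{L^\infty}$ and $\sup_n\omega_{Z^n}(0,T)$.

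Next, by \cref{lemma:compactnessArg} one extracts a subsequence along which $w^n\to\bar w$ in $C_tW^{-1,1}$ and $w^n\overset{\ast}{\rightharpoonup}\bar w$ in $L^\infty_{t,x}$, with $\bar w\in C_tW^{-1,1}\cap L^\infty_{t,x}$; it then remains to identify $\bar w$. Testing \cref{eq:rougheuler} against an arbitrary $\psi\in W^{3,1}$ and sending $n\to\infty$, each term passes to the limit: the drift, because $u^n=K_{BS}*w^n$ is bounded in $C_tC^\alpha$ and converges in $C_tW^{-1,1}$, hence in $C_tL^2$ by interpolation, while $w^n\to\bar w$ in $C_tW^{-1,2}$, so $\int_s^t u^n_r\sg w^n_r\,dr(\psi)\to\int_s^t(K_{BS}*\bar w_r)\sg\bar w_r\,dr(\psi)$; the rough operators, because $A^{n,i,*}_{s,t}\psi\to A^{i,*}_{s,t}\psi$ in $W^{-1,1}$ (a consequence of $\sigma^n\to\sigma$ in $C^3$ and $\mathbf{Z}^n\to\mathbf{Z}$) paired with $w^n_s\to\bar w_s$. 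Consequently $w^{n,\natural}_{s,t}(\psi)\to\bar w^{\natural}_{s,t}(\psi)$, where $\bar w^\natural$ is defined from $\bar w$ and $(w_0,\sigma,\mathbf{Z})$ as in \cref{def:roughsolution}, and the uniform bound on $\omega_{w^{n,\natural}}$ passes to the limit by lower semicontinuity of the $\tfrac p3$-variation, so $\bar w^{\natural}\in C^{p/3\var}_{2,\coL,L}(W^{-3,1})$. Hence $\bar w$ is a solution of \cref{eq:origEuler} with data $(w_0,\sigma,\mathbf{Z})$ in the sense of \cref{def:roughsolution}, and by \cref{thm:uniq} it is the unique such solution; since every subsequence of $\{w^n\}$ has a further subsequence converging to this same $\bar w$, the whole sequence converges, proving continuity. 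For the Wong--Zakai part, fix $H\in(\tfrac13,\tfrac12]$ and pick $p\in(1/H,3)$, which is nonempty since $H>\tfrac13$ and lies in $[2,3)$ since $1/H\ge 2$. The interpolations $B^{H,N}$ are smooth, and their canonical lifts $\mathbf{B}^{H,N}$ converge almost surely in $\rpspace$ to the geometric (Stratonovich) lift $\mathbf{B}^{H}$ of $B^H$, a classical fact for Gaussian rough paths (see \cite{FV10,FH20}). For each $N$, by uniqueness the smooth-driver solution $w^N$ of \cref{eq:origEuler} with $\dot Z=\dot B^{H,N}$ coincides with the solution in the sense of \cref{def:roughsolution} driven by $\mathbf{B}^{H,N}$; applying the continuity just established with $w_0$, $\sigma$ fixed and $\mathbf{Z}^N=\mathbf{B}^{H,N}$ gives $w^N\to w$ almost surely in $(L^\infty_{t,x})^{\text{weak},*}\cap C_tW^{-1,1}$, where $w$ is the solution driven by $\mathbf{B}^{H}$, that is, the solution of the Stratonovich equation \cref{eq:wongzakai}.

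The main obstacle is the uniformity in the first step: one must check that the remainder bound \cref{stimaRemUnif} and the $p$-variation bound \cref{eq:apriorisol} can be taken with constants and a localization $(\coL,L)$ independent of $n$ under only $C^3$-convergence of $\sigma^n$ and $\rpspace$-convergence of $\mathbf{Z}^n$, and then that the rough operators and the remainder genuinely pass to the limit in the tested equation. The subsequent identification of the weak-$\ast$ limit as the actual solution relies entirely on the uniqueness theory of \cref{sec:uniq}.
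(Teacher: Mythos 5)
Your proposal is correct and follows essentially the same route as the paper: uniform a priori bounds depending only on the data, compactness via \cref{lemma:compactnessArg}, passage to the limit in the tested equation as in the existence proof of \cref{th:vanishvisc}, identification of the limit through the uniqueness result \cref{thm:uniq} to upgrade subsequential to full convergence, and then the Wong--Zakai statement as an immediate consequence of the almost sure rough-path convergence of the canonical lifts of the piecewise linear approximations. Your write-up is in fact somewhat more detailed than the paper's (e.g.\ the measure-preserving argument for the uniform $L^\infty$ bound, the lower semicontinuity of the localized $\tfrac{p}{3}$-variation, and the explicit choice of $p\in(1/H,3)$), but the underlying argument is the same.
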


\begin{proof}
We start by showing that, given a converging sequence of parameters, the solutions depending on it converge themselves. 
To this aim, consider $\{w_0^N, \sigma^N, \mathbf{Z}^N\} \subseteq L^\infty \times (C^3)^M \times \rpspace$ and assume it converges to some $\{w_0, \sigma, \mathbf{Z}\}$ in the same space. 
By well-posedness of \cref{eq:rougheuler}, we know that for each $N\in\N$ there exists a solution $w^N$ associated to the $N$-th element of the sequence. 
In particular, since the bounds we found only depend on initial parameters, the sequence of solutions $\{w^N\}$ is uniformly bounded in $L^\infty_{t,x} \cap C^{p\var}_T W^{-1,1}$: this ensures us that there exists a subsequence $\{w^{N_k}\}$ converging strongly to $w\in C_t W^{-1,1}$ (cf. \cref{lemma:compactnessArg}). 

We conclude by noting that the unbounded rough drivers $(A^{N,1}, A^{N,2})$, associated to $(\sigma^N,\mathbf{Z^N})$, converge, making us able to prove that $w$ solves \cref{eq:rougheuler} like we did for the existence proof, and that, by uniqueness of solutions, it is the whole sequence $\{w^N\}$ that converges.

At this point, the second statement if trivial: we know (see e.g. \cite{FH20}) that piecewise linear are one of those approximations of a fractional Brownian motion such that their canonical lift $(B^{H,N}, \int_0^\cdot B^{H,N} \otimes d B^{H,N})$ converges to $(B^H, \mathbb{B}^{H})$ in the rough path topology almost surely. 
\end{proof}

\begin{remark}
    In the case $H=\frac{1}{2}$, $B^H$ is a standard Brownian motion. 
    Then it is well known that piecewise linear approximations and their canonical lifts converge to Stratonovich enhanced Brownian motion, $(B,\mathbb{B}^{\textup{Strat}})$. 
    Since (Stratonovich) integration against an enhanced Brownian motion coincides almost surely with standard Stratonovich integration, this yields that $w^N$ associated to $\mathbf{B}^{H,N}$ converge to a probabilistically strong solution of \cref{eq:wongzakai}. 
    In this case, we end up recovering the result obtained in \cite{BFM16}.
\end{remark}

\begin{remark}
    The reasoning behind the proof of the Wong-Zakai statement in \cref{thm:WZ} actually holds in more general cases. For example, it holds for Gaussian rough paths satisfying the hypothesis of \cite[Theorem 10.4]{FH20}, for which it can be proven that piecewise linear approximations converge in rough path topology.
\end{remark}


\newpage
\appendix


\section{A compactness argument}

We prove here a compactness result, which plays a crucial role in \cref{th:vanishvisc}. 

\begin{lemma}\label{lemma:compactnessArg}
    Consider a sequence $\{g_n\}_{n\in \N} \subset L^{\infty}([0,T]\times \T^2) \cap C^{p\var}(W^{-1,1})$ such that $\norm{g_n}{L^{\infty}_{t,x}}$ is uniformly bounded and there exist $L>0$, two controls $\co$ and $\coL$ satisfying 
    \begin{equation*}\label{AssumptionCompact}
    \norm{(g_n)\st}{W^{-1,1}}\le \co(s,t)^{1/p} 
    \qquad
    \forall n\in\N, (s,t)\in\Delta_T \text{ s.t. }\coL(s,t)\le L .
    \end{equation*}
    Then, $\{g_n\}_n$ is compact in $C([0,T],W^{-1,1})$. 
\end{lemma}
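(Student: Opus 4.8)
The plan is to prove this by the Arzelà–Ascoli theorem for functions valued in the Banach space $W^{-1,1}(\T^2)$. Concretely, I will verify two things: that $\{g_n\}_n$ is equicontinuous as a family of maps $[0,T]\to W^{-1,1}$, and that for each fixed $t\in[0,T]$ the set $\{g_n(t)\}_n$ is relatively compact in $W^{-1,1}$. Relative compactness of $\{g_n\}_n$ in $C([0,T],W^{-1,1})$ then follows immediately.

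For equicontinuity, recall that for a $1$-index function $(g_n)\st=g_n(t)-g_n(s)$. Both $\co$ and $\coL$ are continuous on the compact set $\Delta_T$ and vanish on the diagonal, hence are uniformly continuous there; in particular, for any $\varepsilon>0$ there is $\delta>0$ such that $|t-s|<\delta$ forces simultaneously $\coL(s,t)\le L$ and $\co(s,t)^{1/p}<\varepsilon$. (If this failed one could extract $(s_k,t_k)$ with $|t_k-s_k|\to0$ but $\coL(s_k,t_k)>L$, pass to a subsequence converging to some diagonal point $(s,s)$, and contradict $\coL(s,s)=0$; similarly for $\co$.) For such $(s,t)$ the standing hypothesis yields, for every $n$,
\[
\norm{g_n(t)-g_n(s)}{W^{-1,1}}=\norm{(g_n)\st}{W^{-1,1}}\le\co(s,t)^{1/p}<\varepsilon ,
\]
which is exactly a modulus of continuity uniform in $n$. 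Note that no passage from localized to global $p$-variation is needed: a small time increment automatically satisfies the localization constraint.

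For pointwise relative compactness, the assumption $\norm{g_n}{L^\infty_{t,x}}\le M$ (uniform in $n$) gives $\norm{g_n(t)}{L^\infty(\T^2)}\le M$, hence $\norm{g_n(t)}{L^2(\T^2)}\le M|\T^2|^{1/2}$ for all $n$ and $t$, since the torus has finite measure. Thus $\{g_n(t):n\in\N,\ t\in[0,T]\}$ is contained in a fixed bounded ball of $L^2(\T^2)$. Because the embedding $L^2(\T^2)\hookrightarrow W^{-1,2}(\T^2)$ is compact (the dual of the Rellich embedding $W^{1,2}\hookrightarrow L^2$) and $W^{-1,2}(\T^2)\hookrightarrow W^{-1,1}(\T^2)$ continuously, that ball is relatively compact in $W^{-1,1}(\T^2)$; in particular $\{g_n(t)\}_n$ is relatively compact in $W^{-1,1}$ for each $t$ (indeed uniformly in $t$).

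Combining the two points and applying Arzelà–Ascoli in $C([0,T];W^{-1,1})$ gives that $\{g_n\}_n$ is relatively compact there, proving the lemma. The only delicate step is the first one, namely turning an a priori bound that is only valid under the localization constraint $\coL(s,t)\le L$ into a genuine uniform modulus of continuity; this is resolved by the continuity of the controls near the diagonal, and the rest is the standard compact-embedding bookkeeping on the torus.
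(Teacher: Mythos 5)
Your overall strategy --- a direct Arzel\`a--Ascoli argument, with equicontinuity coming from the control bound and pointwise precompactness from a compact embedding --- is legitimate and more direct than the paper's proof, which instead mollifies in time (via $J_\alpha f(s)=\frac1\alpha\int_s^{s+\alpha}f(r)dr$), extracts limits of $J_\alpha g_n$, and concludes by a triangle inequality. Your equicontinuity step, including the reduction of the localized hypothesis to a genuine modulus of continuity via uniform continuity of the controls on $\Delta_T$, is correct. The genuine gap is in the pointwise-compactness step: with the paper's explicit convention $W^{-n,q}:=(W^{n,q})^*$, the embedding you invoke, $W^{-1,2}\hookrightarrow W^{-1,1}$, goes the wrong way. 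Since $W^{1,2}(\T^2)\subset W^{1,1}(\T^2)$ continuously, dualizing gives $W^{-1,1}=(W^{1,1})^*\hookrightarrow (W^{1,2})^*=W^{-1,2}$, not the reverse. Worse, the statement you would need from the $L^2$ bound alone is false: $W^{1,1}(\T^2)\hookrightarrow L^2(\T^2)$ is the \emph{critical} Sobolev embedding and is not compact, and concentrating sequences such as $f_k=k\,\uno_{B(0,1/k)}$ are bounded in $L^2$ and converge weakly to $0$ there, yet remain bounded away from $0$ in $(W^{1,1})^*$ (pair them with normalized truncated cones supported on $B(0,2/k)$, whose $W^{1,1}$-norm is of order $1/k$). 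So ``bounded in $L^2$ $\Rightarrow$ relatively compact in $W^{-1,1}$'' does not hold in this setting.

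The repair is to use the full $L^\infty$ bound you already have, rather than degrading it to $L^2$: by Rellich--Kondrachov the subcritical embedding $W^{1,1}(\T^2)\hookrightarrow L^1(\T^2)$ is compact, hence by Schauder duality $L^\infty(\T^2)=(L^1)^*$ embeds compactly into $(W^{1,1})^*=W^{-1,1}$, and the fixed $L^\infty$-ball containing all $g_n(t)$ is relatively compact in $W^{-1,1}$ --- this is precisely the embedding the paper exploits. A second, minor point: $\norm{g_n}{L^\infty_{t,x}}\le M$ only controls $\norm{g_n(t)}{L^\infty_x}$ for a.e.\ $t$; to get it for every $t$ combine continuity of $g_n$ in $W^{-1,1}$ with weak-$*$ compactness in $L^\infty$ (the paper's time-averaging operators $J_\alpha$ are introduced exactly to sidestep this issue, since $J_\alpha g_n(s)$ is bounded by the essential supremum for every $s$). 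With these two corrections your argument closes and gives the same conclusion by a somewhat shorter route than the paper's.
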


\begin{proof}
    For all $\alpha \in (0,L]$, define the operator $J_{\alpha}f(s)=\frac{1}{\alpha}\int_s^{s+\alpha}f(r)dr$, where we impose $f(r)=f(T)$ for all $r>T$. The following hold true: 
    \begin{enumerate}
        \item For any fixed $\alpha$, $J_{\alpha}g_n: [0,T] \to W^{-1,1}$ are equicontinuous functions;
        \item As $\alpha$ vanishes, $\sup_{n\in \N}\norm{J_{\alpha}g_n-g_n}{C_tW^{-1,1}_x}\to 0$;
        \item For all $s \in [0,T]$ and $\alpha \in (0,L]$, $\left\{J_{\alpha}g_n(s)\right\}_n$ is relatively compact in $W^{-1,1}$;
        \item There exists a subsequence $g_{n_k}$ that converges weakly-* to some $g\in L^{\infty}([0,T]\times \T^2)$.
    \end{enumerate}
    The first and second properties follow from the hypothesis, the third one from $W^{1,1}$ being compactly embedded into $L^1$, and the last one from standard results in functional analysis. 

    Let now $\alpha$ be fixed: Ascoli-Arzelà theorem yields a subsequence, denoted again by $\{J_\alpha g_{n_k}\}$, such that $J_{\alpha}g_{n_k}$ converges to some $h$ in $C_tW^{-1,1}_x$. Then, point $4$ above and the continuity of $J_\alpha$ imply that $h=J_{\alpha}g$. In particular, any subsequence of $J_{\alpha}g_{n_k}$ has a sub-subsequence that converges to $h$; it follows that the full sequence $J_{\alpha}g_{n_k}$ must converge to $h$.
    
    Eventually, the result follows from the inequality
    \begin{multline*}
        \norm{g_{n_k}-g_{n_l}}{C_t W^{-1,1}}
        \le 
        \norm{g_{n_k}-J_{\alpha}g_{n_k}}{C_tW^{-1,1}} \\
        + \norm{J_{\alpha}g_{n_k}-J_{\alpha}g_{n_l}}{C_tW^{-1,1}_x} + \norm{J_{\alpha}g_{n_l}-g_{n_l}}{C_tW^{-1,1}_x} .
    \end{multline*}
\end{proof}

\section{Sewing Lemma and Rough Gronwall Lemma}

Sewing Lemma is crucial in rough paths theory since it allows for the building of rough integrals. We report a proper version of the Sewing Lemma, which can be found in \cite{DEYA20193577}. Its proof is a straightforward modification of \cite[Lemma 4.2]{FH20}.

\begin{lemma}\label{SewingLemma}
    Let $\xi \in (0,1)$, $L>0$ and $V$ be a Banach space. Consider two controls  $\co,\coL$ and a $2$-index function $h:\Delta_T\to V$ such that 
    \begin{equation*}
        |\delta h_{s,u,t}|\coloneqq |h_{s,t}-h_{s,u}-h_{u,t}|\le \co(s,t)^{\frac{1}{\xi}},
    \end{equation*}
    for all $(s,u,t)\in \triple$ with $\coL(s,t)\le L$.\\
    Then, there exists a unique path $I(h):[0,T]\to V$ with $I(h)_0=0$ such that the 2-index function
    $\Lambda_{s,t}\coloneqq I(h)_{s,t}-h_{s,t} \in C^{\xi\var,2}_{\coL,L}$.
    Moreover, there exists $C_{\xi}>0$ such that 
    \begin{equation*}
        |\Lambda\st|\le C_{\xi}\co(s,t)^{\frac{1}{\xi}},
    \end{equation*}
    for all $(s,t)\in \Delta_T$ with $\coL(s,t)\le L$.\\
    In addition if $h \in C^{p\var,2}_{\coL,L}(V)$ for some $p\geq\xi$, then $I\in C^{p\var}(V)$.
\end{lemma}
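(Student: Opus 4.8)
The plan is to run the classical sewing (Young/Gubinelli) argument, but cell by cell so as to respect the localization $(\coL,L)$. The one observation that makes the localization essentially cosmetic is that, by superadditivity of $\coL$, whenever $\coL(s,t)\le L$ every subinterval $[u,v]\subseteq[s,t]$ also satisfies $\coL(u,v)\le L$; hence on such an $[s,t]$ the defining bound $|\delta h_{u,v,w}|\le\co(u,v)^{1/\xi}$ is available for \emph{all} $s\le u\le v\le w\le t$, and the ``drop a point'' step below never leaves the localization.

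First I fix $(s,t)$ with $\coL(s,t)\le L$ and, for a partition $\Pi=\{s=t_0<\dots<t_N=t\}$, set $S^\Pi:=\sum_i h_{t_i,t_{i+1}}$. Superadditivity of $\co$ gives an interior $t_i$ with $\co(t_{i-1},t_{i+1})\le\frac{2}{N-1}\co(s,t)$; removing it changes $S^\Pi$ by $\delta h_{t_{i-1},t_i,t_{i+1}}$, of size $\le(\frac{2}{N-1})^{1/\xi}\co(s,t)^{1/\xi}$. Iterating down to the trivial partition yields the Young bound $|S^\Pi-h_{s,t}|\le C_\xi\,\co(s,t)^{1/\xi}$ with $C_\xi:=2^{1/\xi}\zeta(1/\xi)<\infty$ (this is where $1/\xi>1$ enters). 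Applying the same bound on each cell of $\Pi$ shows $|S^{\Pi'}-S^\Pi|\le C_\xi(\max_{[u,v]\in\Pi}\co(u,v))^{1/\xi-1}\co(s,t)$ for refinements $\Pi\subseteq\Pi'$, which tends to $0$ with the mesh by continuity of $\co$; so the net $(S^\Pi)_\Pi$ is Cauchy and we define $\tilde I_{s,t}:=\lim_\Pi S^\Pi$, which satisfies $|\tilde I_{s,t}-h_{s,t}|\le C_\xi\co(s,t)^{1/\xi}$. Since partitions through any fixed $u\in[s,t]$ are cofinal, $\tilde I$ is additive: $\tilde I_{s,t}=\tilde I_{s,u}+\tilde I_{u,t}$ whenever $\coL(s,t)\le L$.

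Next I globalize. Picking, by continuity of $\coL$ and compactness, $0=\tau_0<\dots<\tau_K=T$ with each $\coL(\tau_j,\tau_{j+1})\le L$, I set $I(h)_0=0$ and $I(h)_t:=\sum_{l<j}\tilde I_{\tau_l,\tau_{l+1}}+\tilde I_{\tau_j,t}$ for $t\in[\tau_j,\tau_{j+1}]$; additivity of $\tilde I$ makes this independent of the chosen partition and gives $I(h)_{s,t}=\tilde I_{s,t}$ whenever $\coL(s,t)\le L$. With $\Lambda_{s,t}:=I(h)_{s,t}-h_{s,t}$, the bound $|\Lambda_{s,t}|\le C_\xi\co(s,t)^{1/\xi}$ and superadditivity of $\co$ give $\sum_{[s,t]\in\Pi}|\Lambda_{s,t}|^\xi\le C_\xi^\xi\,\co(0,T)$ over any $\Pi\in\parloc$, i.e.\ $\Lambda\in C^{\xi\var,2}_{\coL,L}(V)$ with the stated constant. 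If moreover $h\in C^{p\var,2}_{\coL,L}(V)$ with $p\ge\xi$, then $\Lambda$ (finite localized $\xi$-variation, hence finite localized $p$-variation since $\sup_{\coL(s,t)\le L}|\Lambda_{s,t}|<\infty$) and $h$ both have finite localized $p$-variation, hence so does $I(h)_{s,t}=h_{s,t}+\Lambda_{s,t}$, and \cref{EstimateControl} upgrades this to $I(h)\in C^{p\var}(V)$.

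For uniqueness, if $\hat I_0=0$ and $\hat\Lambda_{s,t}:=\hat I_{s,t}-h_{s,t}$ lies in $C^{\xi\var,2}_{\coL,L}$, then $D_{s,t}:=I(h)_{s,t}-\hat I_{s,t}=\Lambda_{s,t}-\hat\Lambda_{s,t}$ is the increment of the path $I(h)-\hat I$ (vanishing at $0$) and lies in $C^{\xi\var,2}_{\coL,L}$. Writing $\co_D(s,t):=\|D\|^\xi_{\xi\var,(\coL,L),[s,t]}$, a control by \cref{BestControl} with $|D_{u,v}|\le\co_D(u,v)^{1/\xi}$ on the localization, for $\coL(s,t)\le L$ and any partition $\Pi$ of $[s,t]$ one has $|D_{s,t}|\le\sum_\Pi|D_{u,v}|\le(\max_\Pi\co_D(u,v))^{1/\xi-1}\co_D(s,t)\to0$ as the mesh shrinks (here $1/\xi-1>0$ and $\co_D$ is a control); thus $D\equiv0$ on the localization, hence everywhere by telescoping through $\{\tau_j\}$, so $I(h)=\hat I$. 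The only point requiring care is the bookkeeping around the localization — that dropping a partition point never produces an interval with $\coL>L$, and that the cell-wise limits $\tilde I$ glue into one well-defined path — which is handled by the monotonicity remark and by additivity of $\tilde I$, respectively; the rest is the standard sewing estimate together with the elementary control manipulations already in use in the paper.
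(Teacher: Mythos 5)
Your proof is correct, and it follows exactly the route the paper intends: the paper gives no proof of \cref{SewingLemma} beyond citing the classical sewing lemma (Lemma 4.2 in Friz--Hairer) and calling the localized version a ``straightforward modification'', and your argument is precisely that modification — the standard point-dropping estimate with constant $2^{1/\xi}\zeta(1/\xi)$, made legitimate on each cell by the monotonicity of $\coL$ under superadditivity, followed by gluing the local limits along a partition with $\coL(\tau_j,\tau_{j+1})\le L$ and the usual additivity argument for uniqueness. The bookkeeping steps you flag (localization preserved when dropping points, well-definedness of the glued path, the upgrade to $C^{p\var}$ via \cref{EstimateControl} and \cref{BestControl}) are all handled correctly.
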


\begin{corollary} 
    Consider $h:\Delta_T\to V$ such that $h \in C^{\xi\var,2}_{\coL,L}(V)$ and it satisfies the assumptions of \cref{SewingLemma}. Then, 
    \begin{equation*}\label{ineq: corSewing}
        |h\st|\le C_{\xi}\co(s,t)^{\frac{1}{\xi}},
    \end{equation*}
    for all $(s,t)\in \Delta_T$ s.t. $\coL(s,t)\le L$.
\end{corollary}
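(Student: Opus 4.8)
The plan is to read the conclusion off the \emph{uniqueness} clause of the Sewing Lemma, \cref{SewingLemma}: the extra hypothesis $h\in C^{\xi\var,2}_{\coL,L}(V)$ forces the sewn path $I(h)$ to be identically zero, after which the stated bound is literally the estimate on $\Lambda$ already supplied by \cref{SewingLemma}.

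Concretely, I would first check that the constant path $t\mapsto 0\in V$ is an admissible candidate for $I(h)$ in the statement of \cref{SewingLemma}: it is continuous, it vanishes at $t=0$, its associated $2$-index increment is the zero function, and hence its difference with $h$ equals $-h$, which lies in $C^{\xi\var,2}_{\coL,L}(V)$ precisely by the assumption of the corollary. Since \cref{SewingLemma} asserts that there is a \emph{unique} path with value $0$ at $t=0$ whose difference with $h$ has finite localized $\xi$-variation, we conclude $I(h)\equiv 0$, and therefore $\Lambda\st=I(h)\st-h\st=-h\st$ for every $(s,t)\in\Delta_T$. Plugging this into the bound $|\Lambda\st|\le C_\xi\,\co(s,t)^{1/\xi}$, valid for all $(s,t)$ with $\coL(s,t)\le L$, gives exactly $|h\st|\le C_\xi\,\co(s,t)^{1/\xi}$ on that set, which is the claim.

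There is essentially no obstacle here; the only point needing care is verifying that the zero path genuinely meets every requirement imposed on $I(h)$ in \cref{SewingLemma}, so that uniqueness may be invoked. Should one prefer to avoid appealing to uniqueness, the same conclusion follows by a direct limiting argument: for fixed $(s,t)$ with $\coL(s,t)\le L$ and a partition $\Pi\in\parloc$ of $[s,t]$, additivity of $I(h)$ gives $I(h)\st=\sum_{[u,v]\in\Pi}h_{u,v}+\sum_{[u,v]\in\Pi}\Lambda_{u,v}$; since $\xi<1$ and both $h$ and $\Lambda$ have finite localized $\xi$-variation, super-additivity of the relevant controls forces both sums to tend to $0$ as the mesh of $\Pi$ shrinks, whence $I(h)\st=0$ and $h\st=-\Lambda\st$ once more. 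I would present the uniqueness route, since it is the shortest.
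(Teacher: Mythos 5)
Your proof is correct and is exactly the route the paper intends (the corollary is stated without a written proof, as an immediate consequence of \cref{SewingLemma}): since $-h\in C^{\xi\var,2}_{\coL,L}(V)$ by hypothesis, the zero path is an admissible candidate, so the uniqueness clause forces $I(h)\equiv 0$, hence $\Lambda=-h$ and the claimed bound is precisely the lemma's estimate on $\Lambda$. Your choice to present the uniqueness argument rather than the partition-limit sketch is also the right one, since the latter would additionally require justifying that the maximal increment of $h$ and $\Lambda$ vanishes along refining partitions.
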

The second reported result is a generalization of the Rough Gronwall Lemma firstly proved in \cite{DEYA20193577}.
\begin{lemma}\label{Rough Gronwall}
Consider a function $G:[0,T] \to \R^+$, three controls $\co_1,\co_2,\co_3$ and some constants $L,C'>0$, $C\geq 1$, $k>k'\geq 1$.\\
Suppose that the following conditions hold:
\begin{enumerate}
    \item $\co_2(s,t)\le \co_1(s,t)$ for all $(s,t) \in \Delta_T$;
    \item For all $(s,t)\in \Delta_T$ with $\co_1(s,t)\le L$,
    \begin{equation*}
       G_{s,t}\le C\big(\sup_{r \in [0,T]}G_r+C'\big) \co_1(s,t)^{\frac{1}{k}}+\co_2(s,t)^{\frac{1}{k'}}+\co_3(s,t). 
    \end{equation*}
\end{enumerate}
Then, it holds
\begin{equation*}
\begin{aligned}    
    \sup_{t\in [0,T]} G_t \le 2 \exp\Big(\frac{\co_1(0,T)}{\alpha L}\Big)\Big\{G_0 &+ \sup_{t \in [0,T]}\Big(\co_3 (0,t)\exp\Big(\frac{-\co_1(0,t)}{\alpha L}\Big)\Big)\\
    &+ 
    \sup_{t \in [0,T]}\Big(\co_2 (0,t)^{\frac{1-\theta}{k'}}\exp\Big(\frac{-\co_1(0,t)}{\alpha L}\Big)\Big)+C'\Big\},
    \end{aligned}
\end{equation*}
where $\theta=\frac{k'}{k}$ and $\alpha=min\Big(1,\frac{1}{L(2Ce^2)^{k}}\Big)$.
\end{lemma}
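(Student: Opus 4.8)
This is a quantitative strengthening of the rough Gr\"onwall lemma of \cite{DEYA20193577}, and the plan is to run that argument with three adjustments: the control $\co_3$ enters linearly, the control $\co_2$ appears with exponent $1/k'$ for some $k'<k$, and there is an affine shift by $C'$. \emph{(Here the supremum in hypothesis~(2) is read as $\sup_{r\in[0,t]}G_r$; this causal restriction is what makes the conclusion possible, since $G_t=t$ with $\co_1(s,t)=t-s$ satisfies the displayed inequality with the supremum taken over all of $[0,T]$ but violates the conclusion.)} The engine is the familiar one: a greedy partition of $[0,T]$ at the scale $\delta\coloneqq\alpha L$, a local absorption estimate on each cell producing a discrete Gr\"onwall recursion, and a resummation of the $\co_2$- and $\co_3$-contributions against geometric weights --- which is exactly where the two weighted suprema in the conclusion come from.

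First I would build the greedy partition $0=\tau_0<\tau_1<\dots<\tau_N=T$ determined by $\co_1(\tau_{i-1},\tau_i)=\delta$ for $i<N$ and $\co_1(\tau_{N-1},\tau_N)\le\delta$; this is possible because $\co_1$ is continuous, and super-additivity yields $(N-1)\delta\le\co_1(0,T)$, so $N\le\co_1(0,T)/(\alpha L)+1$. On a single cell, for $t'\in[\tau_{i-1},\tau_i]$ one has $\co_1(\tau_{i-1},t')\le\delta\le L$, so hypothesis~(2) applies; setting $\widetilde M_i\coloneqq C'+\sup_{r\in[0,\tau_i]}G_r$ and splitting $\sup_{r\in[0,t']}G_r=\max(\sup_{[0,\tau_{i-1}]}G,\sup_{[\tau_{i-1},t']}G)$, I would take the supremum over $t'\in[\tau_{i-1},\tau_i]$ and absorb the running supremum using the smallness $C\,\delta^{1/k}=C(\alpha L)^{1/k}\le\tfrac{1}{2e^2}<1$ forced by the choice of $\alpha$. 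This gives the recursion
\begin{equation*}
\widetilde M_i\le\lambda\Big(\widetilde M_{i-1}+\co_2(\tau_{i-1},\tau_i)^{1/k'}+\co_3(\tau_{i-1},\tau_i)\Big),\qquad \lambda\coloneqq\big(1-C(\alpha L)^{1/k}\big)^{-1}\in(1,2),
\end{equation*}
and iterating it yields $\sup_{t}G_t\le\widetilde M_N\le\lambda^N(C'+G_0)+\sum_{j=1}^N\lambda^{N-j+1}\big(\co_2(\tau_{j-1},\tau_j)^{1/k'}+\co_3(\tau_{j-1},\tau_j)\big)$, where $\lambda^N\le 2e^{\co_1(0,T)/(\alpha L)}$ follows from $\ln\lambda\le 2C(\alpha L)^{1/k}$, $N\le\co_1(0,T)/(\alpha L)+1$ and the definition of $\alpha$.

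It remains to resum the two sums. For the $\co_3$-contribution I would set $a_i\coloneqq\co_3(0,\tau_i)$, use super-additivity ($\co_3(\tau_{i-1},\tau_i)\le a_i-a_{i-1}$) and Abel summation to reach $\lambda\big(a_N+(\lambda-1)\sum_{i<N}\lambda^{N-i-1}a_i\big)$, and then exploit the tail bound $\co_1(\tau_i,T)\ge(N-1-i)\delta$ (super-additivity again), which forces $\co_1(0,\tau_i)\le\co_1(0,T)-(N-1-i)\delta$ and hence $a_i\le e^{\co_1(0,T)/(\alpha L)}e^{-(N-1-i)}\sup_t\big(\co_3(0,t)e^{-\co_1(0,t)/(\alpha L)}\big)$; since $\lambda<e$, the geometric series in $\lambda/e$ converges and the $\co_3$-sum is $\lesssim e^{\co_1(0,T)/(\alpha L)}\sup_t\big(\co_3(0,t)e^{-\co_1(0,t)/(\alpha L)}\big)$. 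For the $\co_2$-contribution the one new step is an interpolation using hypothesis~(1):
\begin{equation*}
\co_2(\tau_{j-1},\tau_j)^{1/k'}=\co_2(\tau_{j-1},\tau_j)^{1/k}\,\co_2(\tau_{j-1},\tau_j)^{\frac{1}{k'}-\frac{1}{k}}\le(\alpha L)^{1/k}\,\co_2(0,\tau_j)^{(1-\theta)/k'},
\end{equation*}
using $\co_2\le\co_1$, $\co_2(\tau_{j-1},\tau_j)\le\co_2(0,\tau_j)$ and $\tfrac{1}{k'}-\tfrac{1}{k}=(1-\theta)/k'$; the prefactor $(\alpha L)^{1/k}\le\tfrac{1}{2Ce^2}$ supplies the smallness, after which the same Abel-summation/geometric-tail argument gives $\lesssim e^{\co_1(0,T)/(\alpha L)}\sup_t\big(\co_2(0,t)^{(1-\theta)/k'}e^{-\co_1(0,t)/(\alpha L)}\big)$. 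Adding the three contributions produces a bound of exactly the asserted shape, the precise coefficient $2$ being what the choice $\alpha=\min\{1,(L(2Ce^2)^k)^{-1}\}$ is tuned to deliver.

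The only genuinely delicate point is the constant bookkeeping: one must check that $\alpha$ is small enough that simultaneously $C(\alpha L)^{1/k}<1$ (so $\lambda$ exists and $\lambda<2<e$), $\lambda^N\le 2e^{\co_1(0,T)/(\alpha L)}$, and the several geometric sums $\sum(\lambda/e)^m$ together with the prefactors $(\lambda-1)$ and $(\alpha L)^{1/k}$ collapse into the single coefficient $2e^{\co_1(0,T)/(\alpha L)}$. The structural steps --- greedy partition, cell-wise absorption, Abel resummation --- are routine adaptations of \cite{DEYA20193577}, the genuinely new ingredients being the affine shift (handled by carrying $C'$ inside $\widetilde M_i$ rather than as a separate remainder) and the interpolation of $\co_2$ against $\co_1$ enabled by hypothesis~(1).
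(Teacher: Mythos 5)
Your proposal is correct, and at the skeleton level it is the paper's own argument: a greedy partition calibrated by $\co_1(t_{j-1},t_j)=\alpha L$, the interpolation $\co_2(t_{j-1},t_j)^{1/k'}\le \co_1(t_{j-1},t_j)^{1/k}\,\co_2(0,t_j)^{(1-\theta)/k'}$ enabled by hypothesis~(1), and exponential weights $e^{-\co_1(0,t)/(\alpha L)}$. The execution differs in the absorption step. You absorb cell by cell, producing the discrete recursion $\widetilde{M}_i\le\lambda(\widetilde{M}_{i-1}+\cdots)$ with $\lambda=(1-C(\alpha L)^{1/k})^{-1}<2<e$, and then resum the $\co_2$- and $\co_3$-contributions against geometric weights via Abel summation and the tail bound $\co_1(0,\tau_i)\le\co_1(0,T)-(N-1-i)\alpha L$. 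The paper instead telescopes $G_{0,t}$ in one stroke, introduces the weighted quantity $H_t=e^{-\co_1(0,t)/(\alpha L)}\big(G_{\le t}+\co_2(0,t)^{(1-\theta)/k'}+C'\big)$, and performs a single global absorption of $H_{\le T}$ using $C(\alpha L)^{1/k}e^2\le\tfrac{1}{2}$, handling the $\co_3$-terms by plain superadditivity ($\sum_j\co_3(t_j,t_{j+1})\le\co_3(0,t)$) before weighting. Note that in your route the finer Abel/tail resummation is genuinely needed: the crude bound $\lambda^N\co_3(0,T)$ would cost an extra factor $e^{\co_1(0,T)/(\alpha L)}$ relative to the stated weighted supremum, whereas the paper's $H$-device yields that form directly; conversely, your local recursion makes the constant bookkeeping ($\lambda^N\le 2e^{\co_1(0,T)/(\alpha L)}$, geometric sums in $\lambda/e$) elementary, and the numbers do close within the coefficient $2$. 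Finally, your causal reading $\sup_{r\in[0,t]}G_r$ of hypothesis~(2) is the correct one: the statement as printed says $\sup_{r\in[0,T]}$, but the paper's own proof applies the hypothesis with $G_{\le t_{j+1}}$ on each cell (as in the original lemma of \cite{DEYA20193577}), and with the global supremum the conclusion can indeed fail (your example works once $C'$ is chosen small compared with $e^{-\co_1(0,T)/(\alpha L)}$), so you have spotted a typo in the statement rather than introduced a discrepancy.
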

\begin{proof}
    For any $t \in [0,T]$, consider the following functions:
    \begin{align*}
        G_{\le t} & \coloneqq \sup_{s\in[0,t]}G_s , \\ 
        H_t & \coloneqq \exp\Big (-\frac{\co_1(0,t)}{\alpha L}\Big) \Big(G_{\le t}+\co_2(0,t)^{\frac{1-\theta}{k'}}+C'\Big) , \\
        H_{\le t} & \coloneqq \sup_{s\in[0,t]}H_s.
    \end{align*}
    Since $\co_1$ is a continuous function, then there exists a partition $P=\{t_0\coloneqq 0,\dots,t_N\coloneqq T\}$ such that $\co_1(t_{j-1},t_j)=\alpha L$ for all $j\in \{1,\cdots,N-1\}$.\\
    Consider $t\in [t_{i-1},t_i]$, then 
    \begin{align*}
        G_{0,t} 
        ={} & 
        \sum_{j=0}^{i-2}G_{t_j,t_{j+1}}+G_{t_{i-1},t} \\
        \le{} & 
        \sum_{j=0}^{i-2}\Big[
        C(G_{\le t_{j+1}}+C') \co_1(t_j,t_{j+1})^{\frac{1}{k}}
        +\co_2(t_j,t_{j+1})^{\frac{1}{k'}}
        +\co_3(t_j,t_{j+1})\Big] \\
        & +C(G_{\le t}+C') \co_1(t_{i-1},t)^{\frac{1}{k}}+\co_2(t_{i-1},t)^{\frac{1}{k'}}+\co_3(t_{i-1},t) \\
        \le{} &
        \sum_{j=0}^{i-2}\Big[C (\alpha L)^{\frac{1}{k}}(G_{\le t_{j+1}}+C') +\co_2(0,t_{j+1})^{\frac{1-\theta}{k'}}\co_1(t_j,t_{j+1})^{\frac{1}{k}}\Big] \\
        & +C(G_{\le t_i}+C')(\alpha L)^{\frac{1}{k}}
        + \co_2(0,t_i)^{\frac{1-\theta}{k'}}\co_1(t_{i-1},t)^{\frac{1}{k}}+\co_3(0,t) \\
        \le{} &
        C(\alpha L)^{\frac{1}{k}} \sum_{j=0}^{i-1}\Big[G_{\le t_{j+1}}+\co_2(0,t_{j+1})^{\frac{1-\theta}{k'}}+C'\Big]+\co_3(0,t),
    \end{align*}
    where the first and second inequalities are due to the hypothesis.\\
    Now,
    \begin{align*}
        \sum_{j=0}^{i-1} \Big[ G_{\le t_{j+1}} +\co_2(0,t_{j+1})^{\frac{1-\theta}{k'}} + C' \Big] 
        ={} & 
        \sum_{j=0}^{i-1} H_{t_{j+1}} \exp\Big(\frac{\co_1(0,t_{j+1})}{\alpha L}\Big) \\\le{} &
        H_{\le T}\sum_{j=0}^{i-1}\exp(j+1) \\
        \le{} & 
        H_{\le T}\exp(i+1).
    \end{align*}
    Therefore, 
    \begin{equation}\label{Eq1RGL}
        G_t\le G_0+\co_3(0,t)+C(\alpha L)^{\frac{1}{k}}H_{\le T}\exp(i+1).
    \end{equation}
    We exploit \cref{Eq1RGL} to estimate $H_{\le T}$. Indeed, given $t\in [t_{i-1},t_i]$, then
    \begin{align*}
        H_t 
        ={} & 
        \exp\Big (-\frac{\co_1(0,t)}{\alpha L}\Big) \Big(G_{\le t}+\co_2(0,t)^{\frac{1-\theta}{k'}}+C'\Big) \\
        \le{} & 
        \exp\Big(-\frac{\co_1(0,t)}{\alpha L}\Big) \\
        & \Big[G_0+\co_3(0,t) + C(\alpha L)^{1/k}H_{\le T}\exp(i+1) +\co_2(0,t)^{\frac{1-\theta}{k'}}+C'\Big] \\
        \le{} & 
        G_0+\sup_{r \in [0,t]}\Big(\co_3(0,r)\exp\Big (-\frac{\co_1(0,r)}{\alpha L}\Big)\Big) + \\
        & \sup_{r \in [0,t]}\Big(\co_2(0,r)^{\frac{1-\theta}{k'}}\exp\Big (-\frac{\co_1(0,r)}{\alpha L}\Big)\Big) +C(\alpha L)^{\frac{1}{k}}\e^2H_{\le T}+C'.
    \end{align*}
    Therefore, taking the supremum in the time variable $t$ implies that
    \begin{multline}\label{Eq2RGL}
        H_{\le T}
        \le 
        2G_0 + 2 \sup_{r \in [0,T]} \Big( \co_3(0,r) \exp\Big(-\frac{\co_1(0,r)}{\alpha L}\Big) \Big) \\
        +2\sup_{r \in [0,T]}\Big(\co_2(0,r)^{\frac{1-\theta}{k'}}\exp\Big (-\frac{\co_1(0,r)}{\alpha L}\Big)\Big)+2C'.
    \end{multline}
    At last, notice that 
    \begin{equation*}
    \begin{aligned}
        G_{\le T}\le H_T \exp\Big(\frac{\co_1(0,T)}{\alpha L}\Big)\le H_{\le T} \exp\Big(\frac{\co_1(0,T)}{\alpha L}\Big),
    \end{aligned}
    \end{equation*}
     which proves the result once it is combined with \cref{Eq2RGL}. 
\end{proof}

\newpage
\printbibliography

\end{document}